\newtheorem{theorem}{Theorem}[section]
\newtheorem{corollary}{Corollary}[theorem]
\newtheorem{lemma}[theorem]{Lemma}
\newtheorem{definition}[theorem]{Definition}
\newtheorem{prop}[theorem]{Proposition}
\newtheorem*{theorem*}{Theorem}
\newtheorem*{remark}{Remark}
\numberwithin{equation}{section}
\newcommand{\address}[1]{\gdef\@address{#1}}
\newcommand{\email}[1]{\gdef\@email{\url{#1}}}
\newcommand{\@endstuff}{\par\vspace{\baselineskip}\noindent\small
\begin{tabular}{@{}l}\scshape\@address\\\textit{E-mail address:} \@email\end{tabular}}
\title{Counting Closed Geodesics in Rank 1 $\mathrm{SL}\left(2,\mathbb{R}\right)$-orbit Closures}
\author{John Rached}
\date{}
\address{Department of Mathematics, Purdue University,150 N. University Street, West Lafayette, IN 47907, United States}
\email{jabourac@purdue.edu}
\begin{document}
\maketitle

\begin{abstract}

We obtain bounds on the numbers of intersections between triangulations as the conformal structure of a surface varies along a Teichm{\"u}ller geodesic contained in an $\mathrm{SL}\left(2,\mathbb{R}\right)$-orbit closure of rank 1 in the moduli space of Abelian differentials. For $0 \leq \theta \leq 1$, we obtain an exponential bound on the number of closed geodesics in the orbit closure, of length at most $R$, that spend at least $\theta$-fraction of their length in a region with short saddle connections.
\end{abstract}

\section{Introduction}

Many problems in geometry and dynamical systems lead to the study of compact surfaces without boundary which admit flat metrics away from a finite set of conical singularities, where all the curvature is concentrated. A central and classical problem is the study of billiard trajectories, for which understanding flat surfaces obtained by gluing sides of a polygon is essential (see $\cite{wright2016rational}$). It may be very difficult to say anything about an individual flat surface, but there is a natural linear action on the space of deformations of a surface, giving rise to an $\mathrm{SL}\left(2,\mathbb{R}\right)$ dynamical system. The closure of the $\mathrm{SL}\left(2,\mathbb{R}\right)$-orbit of a specific flat surface can provide a wealth of information about the original surface. For a comprehensive survey on this perspective, see $\cite{zorich2006flat}$. \\

For a fixed surface with constant Gaussian curvature $K$, there are natural questions that can be asked about about the behavior of circles, discs, and geodesics. One can, in particular, ask about the growth of lengths of geodesics and growth of the cardinality, where geodesics are considered distinct if they stay an $\epsilon$-distance away from each other. In the case $K$ = -1 and using, for instance, the disk model of hyperbolic space \ $\mathbb{H}^2$, elementary calculations show that the area of an $r$-ball grows like $\pi e^r$, which allows one to conclude that the cardinality of $\epsilon$-separated geodesics emanating from a point has exponential growth. Any closed surface $M$ of constant curvature $K$ = -1 has $\mathbb{H}^2$ as its universal cover, and may be written as a quotient $M = \mathbb{H}^2/\Gamma$, where $\Gamma$ is the fundamental group of $M$. The fundamental group produces closed geodesics on $M$, so it is natural to ask about the behavior of closed geodesics. Furthermore, one can ask about the behavior of the growth of closed geodesics as the hyperbolic structure varies. The most interesting and difficult case of $\textit{simple}$ closed geodesics was the topic of Mirzakhani's thesis $\cite{mirzakhani2008growth}$.\\

The subject of this paper is to consider the growth of closed geodesics in the $\mathrm{SL}\left(2,\mathbb{R}\right)$-orbit closures of flat surfaces, for a natural metric on the space of deformations.

\subsection{Overview}

In a hyperbolic surface, the area growth of radius $R$ balls is roughly $e^R$, and the growth rate of $\epsilon$-separated length $R$ geodesics is $Ce^R$, where $C$ is a constant depending on $\epsilon$. This suggests that the number of closed geodesics of length $\leq R$ grows exponentially in $R$. Using the Selberg trace formula, Huber proved that for hyperbolic surfaces, the number of closed geodesics with length $\leq R$ is asymptotic to $\frac{e^R}{R}$. Later, using softer methods, Margulis achieved the same count. For $M$ a closed Riemannian manifold with negative sectional curvatures, Margulis showed in his thesis $\cite{margulis2003some}$ that if $\mathcal{N}(R)$ denotes the number of closed geodesics of lengths $\leq R$, and $h$ is the topological entropy of $M$

\begin{displaymath}
\mathcal{N}(R) \sim \frac{e^{hR}}{hR}
\end{displaymath}

as $R \to \infty$, where the notation $A \sim B$ means that the ratio $A/B$ tends to $1$ as $R$ tends to infinity.\\

The methods employed by Margulis crucially rely on the homogeneity of hyperbolic surfaces. In particular, they do not directly carry over to the setting of Teichm{\"u}ller dynamics. As Teichm{\"u}ller space exhibits, metrically, many of the features of negatively curved manifolds, it is natural to look for analogs of this geodesic count in this inhomogeneous context.\\

Let $S$ = $S_{g}$ be a surface of genus $g$, and let $\mathcal{M}_{g}(S)$ be the moduli space of Riemann surfaces homeomorphic to $S$. There is a natural identification of the cotangent bundle of $\mathcal{M}_{g}(S)$ with the space of finite area quadratic differentials on $S$. Let $\Omega\mathcal{M}_{g}(S)$ be the space of finite area quadratic differentials that are squares of Abelian differentials (holomorphic 1-forms $\omega$). Henceforth, we will use the term Abelian differential to refer to quadratic differentials that are squares of holomorphic 1-forms. Let $\Omega_1\mathcal{M}_g(S) \subset \Omega\mathcal{M}_g(S)$ be the unit-area locus of $\Omega\mathcal{M}_g(S)$. There is an $\mathrm{SL}(2,\mathbb{R})$-action on $\Omega_1\mathcal{M}_{g}(S)$ given by the action of $\mathrm{SL}(2,\mathbb{R})$ on $\mathbb{R}^2$ corresponding to "multiplication by $A \in \mathrm{SL}(2,\mathbb{R})$" on the atlas of charts to $\mathbb{R}^2$ determined by the quadratic differential. By Teichm{\"u}ller theory, the orbits of the diagonal flow $g_t = \begin{bmatrix}
e^t & 0\\
0 & e^{-t}
\end{bmatrix}$ project to Teichm{\"u}ller geodesics under the natural projection map $\pi:\Omega_1\mathcal{M}_{g}(S) \rightarrow \mathcal{M}_{g}(S)$.  $\Omega\mathcal{M}_{g}(S)$ has a natural stratification: we say $(X,\omega^2) \in \Omega\mathcal{M}_{g,n}(S)$ is of type $ \left(p_1,...,p_k\right)$ if $\omega$ has zeroes of order $\left\{p_i\right\}$. The space of all Abelian differentials of type $\left(p_1,...,p_k\right)$ in $\Omega\mathcal{M}_{g}(S)$ will be denoted by $\Omega\mathcal{M}_{g}(S)\left(p_1,...,p_k\right)$. This is a complex-analytic orbifold of real dimension $4g + 2k - 1$. It also has the structure of a complex algebraic variety.\\

 Using the ideas in Margulis' thesis, Eskin and Mirzakhani $\cite{eskin2008counting}$ show that for $\mathcal{N}(R)$ the number of closed Teichm{\"u}ller geodesics in $\Omega_1\mathcal{M}_{g}(S)$, $\mathcal{N}(R) \sim \frac{e^{hR}}{hR}$ as $R \to \infty$ where $h = 6g -6$. In$\cite{eskin2019counting}$, Eskin-Mirzakhani-Rafi extend this to connected components $\mathcal{C} \subset \Omega_1\mathcal{M}_{g}(S)\left(p_1,...,p_k\right)$ of strata of type $\left(p_1,...,p_k\right)$ (in fact, to general quadratic differentials). The $h$ in the exponent in this case is related to the real dimension of the connected component. Specifically, if $h = \frac{1}{2}\left[1 + \mathrm{dim}_{\mathbb{R}}\mathcal{C}\right]$, the main theorem in $\cite{eskin2019counting}$ is

\begin{theorem*}[Eskin-Mirzakhani-Rafi]
$\mathcal{N}(\mathcal{C},R) \sim \frac{e^{hR}}{hR}$
as $R \to \infty$ and $\mathcal{N}(\mathcal{C},R)$ is the number of closed geodesics of lengths $\leq R$ in a connected component $\mathcal{C}$ in a stratum.
\end{theorem*}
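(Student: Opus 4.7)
The plan is to adapt the strategy of Margulis' thesis to the Teichmüller setting, combining exponential mixing of the flow $g_t$ with a closing lemma to convert return-time statistics into a count of closed orbits. The proof rests on three pillars: (i) exponential mixing of $g_t$ with respect to the Masur-Veech measure $\mu_{\mathcal{C}}$ on $\mathcal{C}$, due to Avila-Gouezel-Yoccoz in this stratum setting; (ii) quantitative recurrence of typical orbits to the thick part of $\mathcal{C}$, of Eskin-Masur type; and (iii) a closing lemma asserting that thick-part orbit segments returning $\epsilon$-close to their starting points can be closed up into genuine closed geodesics shadowing the original segment.

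First I would choose a compact thick-part region $K \subset \mathcal{C}$ and partition a neighborhood of $K$ into a fine collection of flow boxes $\{B_i\}$ adapted to the local product structure given by the strong stable and strong unstable foliations of $g_t$. Exponential mixing then gives, for each pair $(B_i, B_j)$,
\[
\mu_{\mathcal{C}}\bigl(B_i \cap g_{-t}B_j\bigr) = \mu_{\mathcal{C}}(B_i)\,\mu_{\mathcal{C}}(B_j) + O(e^{-\kappa t})
\]
with an explicit rate $\kappa>0$. A Margulis-style Fubini argument converts this measure statement into an asymptotic count of orbit segments of length in $[t,t+\delta]$ traveling from $B_i$ to $B_j$, growing like $\delta\, e^{ht}\mu_{\mathcal{C}}(B_i)\mu_{\mathcal{C}}(B_j)$. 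Summing over boxes, applying the closing lemma to replace each near-return by a closed geodesic without double-counting, and integrating in $t$ gives an asymptotic for the length-weighted total $\sum_{\ell(\gamma)\le R} \ell(\gamma) \sim e^{hR}/h$; summation by parts then yields $\mathcal{N}(\mathcal{C},R) \sim e^{hR}/(hR)$.

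The main obstacle, and the source of most of the technical work, is the non-compactness of $\mathcal{C}$ together with the non-uniform hyperbolicity of $g_t$ near the cusps, where saddle connections become short. Both the mixing rate and the hypotheses of the closing lemma degrade in the thin part, and orbits can make arbitrarily long excursions into the cusps. The essential new input compared to the homogeneous setting is a quantitative recurrence estimate: the $\mu_{\mathcal{C}}$-measure of orbit segments of length $R$ spending more than a fixed small fraction of their length deep in the thin part decays exponentially in $R$, with rate strictly smaller than $h$. This ensures that thin-part excursions do not contribute to the leading term, and that the closing lemma, which applies cleanly only to thick-part returns, suffices to recover a full-density set of closed geodesics. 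With these ingredients aligned, the value $h = \tfrac{1}{2}(1+\dim_{\mathbb{R}}\mathcal{C})$ emerges as the topological entropy of $g_t$ on $\mathcal{C}$, and the count proceeds essentially as in Margulis' original argument.
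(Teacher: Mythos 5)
The paper states this theorem as a citation to Eskin--Mirzakhani--Rafi \cite{eskin2019counting} and does not prove it; the present paper's own contribution (Theorem \ref{Theorem1.1}) is the analogue for rank-1 orbit closures of one ingredient of the EMR proof, namely the upper bound on geodesics lingering in the thin part. So I am comparing your sketch against the actual EMR strategy, as reflected by the machinery the present paper develops in Sections \ref{Section3}--\ref{Section5}.

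Your high-level outline --- flow boxes in the thick part, a Margulis-Fubini count of near-returns, a closing lemma, and a recurrence statement to control the thin part --- is the right shape, and you correctly identify the thin part as where all the difficulty lives. However, there is a genuine gap in how you propose to neutralize the thin part. You invoke a \emph{measure} estimate: the $\mu_{\mathcal{C}}$-measure of orbit segments spending a definite fraction of their time deep in the thin part decays like $e^{(h-c)R}$ for some $c>0$. That statement (essentially Eskin--Masur / Athreya large-deviations recurrence) is true, but it does not by itself bound the \emph{number} of closed geodesics that spend a $\theta$-fraction of their length in the thin part. Closed geodesics form a measure-zero set and can concentrate in regions of tiny measure; nothing in the mixing-and-Fubini calculus rules out a surfeit of periodic orbits clustered near the cusp, precisely where the local product structure, the mixing rate, and the closing lemma all degrade. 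This is the step Margulis' homogeneous argument gets for free (uniform hyperbolicity on a compact space) and that fails here.

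The actual EMR proof replaces the measure estimate with a direct \emph{counting} estimate: the number of closed geodesics of length $\le R$ spending at least a $\theta$-fraction of their length in the $\epsilon$-thin part is at most $e^{(h-1+\delta)R}$ (and more generally $e^{(h-j\theta+\delta)R}$ for $j$ short independent saddle connections, which is exactly the shape of Theorem \ref{Theorem1.1} here). Establishing this requires the machinery your sketch omits: a net $\mathcal{N}$ in moduli space, $(q,\tau)$-regular triangulations, the intersection-number bounds of Section \ref{Section4} (in EMR, their Section 4--5), the ball-counting estimate $|\mathcal{B}_j(X,Y,\tau)| \overset{*}{\prec} \tau^{|{S_g}_X|+|{S_g}_Y|} e^{(h-j)\tau}\mathcal{G}(X)\mathcal{G}(Y)$ of Theorem \ref{theorem4.9}, and the discretized random-walk argument of Section \ref{Section5} that upgrades these box-to-box bounds to a count of trajectories (and hence, via the net, of closed geodesics). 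None of this follows from exponential mixing plus Fubini; it is genuinely combinatorial-geometric, and it is the core of the EMR paper. A secondary point: EMR do not actually use the Avila--Gou\"ezel--Yoccoz exponential mixing rate --- the thick-part lower bound can be run with plain mixing and a closing lemma --- so the reliance on exponential mixing in your pillar (i) is stronger than needed, while the reliance on a measure statement in pillar (ii) is weaker than needed. The proposal would be complete if pillar (ii) were replaced with a direct counting bound for thin-part closed geodesics, and the mechanism you supply does not produce one.
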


Let $\mathcal{T}_g(S)$ be the Teichm{\"u}ller space, the orbifold universal cover of $\mathcal{M}_g(S)$. Let $\Omega\mathcal{T}_g(S)$ and $\Omega_1\mathcal{T}_g(S)$ be defined similarly as above. We have an orbifold covering map:

\begin{displaymath}
\pi:\Omega_1\mathcal{T}_g(S)\left(p_1,...,p_k\right) \rightarrow \Omega_1\mathcal{M}_g(S)\left(p_1,...,p_k\right).
\end{displaymath}
Moreover, if $V \subset \Omega_1\mathcal{M}_g(S)\left(p_1,...,p_k\right)$ is a properly immersed manifold, the pre-image $\pi^{-1}(V)$ is a properly immersed manifold of the same dimension. We will be interested in special classes of submanifolds $V$ of $\Omega_1\mathcal{M}(S)\left(p_1,...,p_k\right)$, the first of which are the lowest dimensional $\mathrm{SL}(2,\mathbb{R})$-invariant manifolds that can be cut out locally by real-linear equations. Specifically, for $(X,\omega^2) \in \Omega_1\mathcal{T}_g(S)\left(p_1,...,p_k\right)$, we are interested in the orbit closure of $\pi\left(\mathrm{SL}(2,\mathbb{R})\cdot \omega^2\right).$ For $\mathrm{Mod}(S)$ the mapping class group of $S = S_g$, it follows from a classical result of of Smillie-Weiss $\cite{smillie2004minimal}$ that if $\Gamma_{\omega} \leq \mathrm{Mod}(S)$ is the set-wise stabilizer of the lift of $\pi\left(\mathrm{SL}(2,\mathbb{R})\cdot \omega^2\right)$ to the connected component $\widetilde{{\pi^{-1} \circ {\pi\left(\mathrm{SL}(2,\mathbb{R})\cdot \omega^2\right)}}}$ of 
$\Omega_1\mathcal{T}_g(S)\left(p_1,...,p_k\right)$ containing $(X,\omega^2)$, then

\begin{displaymath}
\widetilde{{\pi^{-1}\circ{\pi\left(\mathrm{SL}(2,\mathbb{R})\cdot \omega^2\right)}}} =  \Gamma_w \cdot \left(\overline{\mathrm{SL}(2,\mathbb{R})\cdot \omega^2}\right)
\end{displaymath}
when $(X,\omega^2)$ is a $\textit{Veech}$ surface. Veech surfaces have special dynamical properties. In particular, for such $\omega$, $\pi\left(\mathrm{SL}(2,\mathbb{R})\cdot \omega^2\right)$ is a closed orbit. Note that $\mathrm{SL}(2,\mathbb{R})$ acts on $\Omega_1\mathcal{T}_g(S)\left(p_1,...,p_k\right)$ because the action is area-preserving, but there are in fact  $\mathrm{GL}(2,\mathbb{R})^+$-actions on $\Omega\mathcal{T}_g(S)\left(p_1,...,p_k\right)$ and $\Omega\mathcal{M}_g(S)\left(p_1,...,p_k\right)$. Furthermore, the image of $\mathrm{SL}(2,\mathbb{R})\cdot \omega^2$ under

\begin{displaymath}
\Omega_1\mathcal{T}_g(S)\left(p_1,...,p_k\right) \xrightarrow{\pi} \Omega_1\mathcal{M}_g(S)\left(p_1,...,p_k\right) \rightarrow \mathcal{M}_g(S)
\end{displaymath}
and the image of $\mathrm{GL}(2,\mathbb{R})^+\cdot \omega^2$ under

\begin{displaymath}
\Omega\mathcal{T}_g(S)\left(p_1,...,p_k\right) \xrightarrow{\pi} \Omega\mathcal{M}_g(S)\left(p_1,...,p_k\right) \rightarrow \mathcal{M}_g(S)
\end{displaymath}
are the same algebraic curve $V$ in $\mathcal{M}_g(S)$ and a corollary of our result can be interpreted as counting closed geodesics in $V$. The $\mathrm{SL}(2,\mathbb{R})$-orbit of $\omega^2$ in $\Omega_1\mathcal{T}_g(S)\left(p_1,...,p_k\right)$ is contained in the $\mathrm{GL}(2,\mathbb{R})^+$-orbit of $\omega^2$ in $\Omega\mathcal{T}_g(S)\left(p_1,...,p_k\right)$. In fact, we show that our observations apply to a more general class of higher $h$-dimensional affine invariant manifolds $V$ for which $\pi\left(\mathrm{SL}\left(2,\mathbb{R}\right)\cdot \omega^2\right)$ is $\textit{not}$ closed. If $(X,\omega^2)$ is such that $\overline{\mathrm{SL}\left(2,\mathbb{R}\right) \cdot \omega^2}$ is a rank-1 $\mathrm{SL}\left(2,\mathbb{R}\right)$-orbit closure, and $\Gamma_{\omega}$ is the stabilizer of the connected component of $\pi^{-1}\circ \overline{\pi\left(\mathrm{SL}\left(2,\mathbb{R}\right)\cdot \omega^2\right)}$ containing $(X,\omega^2)$, then $\pi^{-1}\circ \overline{\left(\mathrm{SL}\left(2,\mathbb{R}\right)\cdot \omega^2\right)}$ is the closure of $\Gamma_{\omega}\cdot \left(\mathrm{SL}\left(2,\mathbb{R}\right)\cdot \omega^2\right)$ in $\Omega\mathcal{T}_g(S)\left(p_1,...,p_k\right)$.\\

We review the definition of $\textit{extremal length}$ for a saddle connection on $\omega^2$, given in $\cite{eskin2019counting}$. For $\epsilon > 0$, and for any Abelian differential $(X,\omega^2) \in \tilde{V} = \overline{\mathrm{GL}(2,\mathbb{R})^{+}\cdot \omega^2}$, let $Q_{\omega}(\epsilon)$ be the set of saddle connections $\gamma$ so that either the extremal length with respect to $\omega^2$, $\mathrm{Ext}_{\omega}(\gamma)$, satisfies $\mathrm{Ext}_{\omega}(\gamma) \leq \epsilon$ or $\gamma$ appears in a geodesic representative of a simple closed curve $\alpha$ with $\mathrm{Ext}_{\omega}(\alpha)\leq \epsilon$. Let $\mathcal{Q}_{j,\epsilon}\left(p_1,...,p_k\right) \subset \Omega_1\mathcal{T}_g(S)\left(p_1,...,p_k\right)$ be the set of Abelian differentials containing at least $j$ homologically independent saddle connections. Let $V_{j,\epsilon}$ be the set of points in the $h$-dimensional affine manifold $V$ whose lift to $\Omega_1\mathcal{T}_g(S)\left(p_1,...,p_k\right)$ lies in $\mathcal{Q}_{j,\epsilon}\left(p_1,...,p_k\right)$. The main result of this paper is:

\begin{theorem}
\label{Theorem1.1}
Let $N_{\theta}(V_{j,\epsilon},R)$ be the number of closed geodesics of length at most R in $V$ that spend at least $\theta$-fraction of their length in $V_{j,\epsilon}$ where $0\leq\theta\leq1$. Given $\delta > 0$, there exist $\epsilon > 0$ and $R >0$ so that, for all $j\geq 0$ and $0 \leq \theta \leq 1$

\begin{displaymath}
N_{\theta}(V_{j,\epsilon},R) \leq e^{(h-j\theta + \delta)R}.
\end{displaymath}

\end{theorem}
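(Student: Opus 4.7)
The plan is to adapt the Bowen/Margulis counting scheme used in \cite{eskin2019counting} to the rank-1 orbit closure setting, while tracking sojourn time in the short-saddle-connection locus. The baseline count without the $V_{j,\epsilon}$ refinement already yields $\mathcal{N}(V,R)\leq e^{(h+\delta)R}$ by the symbolic-coding/cross-section technology of Eskin-Mirzakhani-Rafi; our task is to extract a factor $e^{-j\theta R}$ when the geodesic is forced to spend $\theta R$ of its length in the thin part.

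First, I would fix a small $T_0 \gg 1$ and partition $[0,R]$ into windows $I_1,\ldots,I_m$ of length $T_0$ with $m=R/T_0$. Each closed geodesic $\gamma$ of length $\leq R$ with base point $x$ is encoded by a binary word $w(\gamma)\in\{0,1\}^m$ where $w_i=1$ iff the midpoint of $g_tx$ on $I_i$ lies in $V_{j,\epsilon}$, and by its $(T_0,\delta)$-Bowen-separated trajectory inside each window. The constraint that $\gamma$ spends at least $\theta$-fraction of its length in $V_{j,\epsilon}$ forces $\#\{i:w_i=1\}\geq \theta m - O(1)$.

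The heart of the argument is a dimensional estimate for the Bowen covering number inside $V_{j,\epsilon}$. In a rank-1 orbit closure, the local affine structure of $V$ at a point $(X,\omega^2)$ is cut out by real-linear equations in period coordinates, with the $\mathrm{SL}(2,\mathbb{R})$-directions carrying two of the $2h$ real dimensions and the rest being relative-period (transverse) directions. Having $j$ homologically independent saddle connections of extremal length $\leq\epsilon$ means that $j$ of these relative period coordinates have absolute value $\lesssim \sqrt{\epsilon}$; by the Eskin-Masur volume bound (extended to affine invariant manifolds in rank 1 by \cite{eskin2019counting}) one has $\mu_V(V_{j,\epsilon})\lesssim \epsilon^{2j}$. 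Combined with the exponential contraction of these transverse periods under $g_t$, this shows that the number of $(T_0,\delta)$-Bowen cells needed to cover $V_{j,\epsilon}$ is at most $e^{(h-j)T_0}$, while the thick-part cells number at most $e^{hT_0}$.

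Summing over windows and over the combinatorial choices of $w$, I obtain
\begin{displaymath}
N_\theta(V_{j,\epsilon},R) \;\leq\; \sum_{m_1\geq \theta m}\binom{m}{m_1}\, e^{(h-j)m_1 T_0}\, e^{h(m-m_1)T_0} \;\leq\; e^{(h-j\theta+\delta)R},
\end{displaymath}
where the binomial factor contributes at most $e^{(\delta/2) R}$ by Stirling and the exponent is maximized at $m_1=\theta m$. The main obstacle is the dimensional/entropy reduction in the thin part: one must show that the transverse contraction by $g_t$ is genuinely by rate $j$ inside $V_{j,\epsilon}$ in a uniform way over the entire rank-1 orbit closure, and that the cross-section machinery of \cite{eskin2019counting} can be localized to windows in which the orbit enters the thin part. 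The rank-1 hypothesis is essential here since it guarantees that the transverse directions to the $\mathrm{SL}(2,\mathbb{R})$-leaves are purely relative-period directions, so that short saddle connections directly cut out a codimension-$j$ piece of the local affine chart and the contraction analysis reduces to a computation in period coordinates.
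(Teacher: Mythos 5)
Your high-level strategy is the same Margulis-style windowed counting used in the paper: partition a length-$R$ geodesic into $\tau$-blocks, bound the number of possible continuations by $e^{h\tau}$ generically and by $e^{(h-j)\tau}$ when the trajectory is in the thin locus, and close with a binomial/Stirling estimate. However, the mechanism you propose for the key per-window bound $e^{(h-j)\tau}$ is not the one the paper uses, and as stated it has a gap that you yourself flag but do not close.

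You argue that having $j$ homologically independent short saddle connections makes $j$ relative-period coordinates small, invoke an Eskin--Masur-type volume bound $\mu_V(V_{j,\epsilon}) \lesssim \epsilon^{2j}$, and then assert that the $(T_0,\delta)$-Bowen covering number of $V_{j,\epsilon}$ drops to $e^{(h-j)T_0}$. This does not work as written. First, a measure bound on $V_{j,\epsilon}$ does not by itself control a Bowen covering number of the set of geodesic \emph{segments} that visit $V_{j,\epsilon}$; passing from volume to counting requires additional input (e.g.\ a thick/thin coding compatible with $g_t$). Second, and more fundamentally, the period coordinates of the $j$ short saddle connections are not free directions in the affine chart of $V$: the local linear equations cutting out $V$ could couple them to the remaining coordinates, so ``short in $j$ coordinates'' need not correspond to a codimension-$j$ slice of $V$. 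You notice this (``the main obstacle'') but leave it unresolved, whereas resolving it is precisely the technical content of Sections 3 and 4 of the paper.

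Concretely, the paper's route to the per-window bound (Corollary \ref{corollary4.14.1}) avoids both of these issues. It uses Wright's theorem that rank-1 orbit closures are completely periodic together with the Cylinder Finiteness Theorem of Mirzakhani--Wright (Theorem \ref{theorem3.4}) to obtain a \emph{finite} list of possible $\mathbb{R}$-linear relations among cylinder circumferences, heights, and hence holonomy vectors (Corollary \ref{corollary3.4.1}). It then converts these linear relations among holonomies into linear relations among \emph{intersection numbers} (Theorems \ref{theorem4.5} and \ref{theorem4.6}), which is what the EMR marking/triangulation machinery actually counts. This yields the effective dimension drop $h_{\mathcal{R}^{orb},\mathcal{R}} = h - j$ in Lemma \ref{lemma4.15}, giving the $e^{(h-j)\tau}$ bound, and from there Theorem \ref{Theorem1.1} follows by the random walk estimates of Section \ref{Section5}. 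So the gap in your proposal is that you do not supply a uniform mechanism to turn ``$j$ short saddle connections in a rank-1 orbit closure'' into a rank-$j$ reduction of the combinatorial counting space; the paper does this via complete periodicity, Cylinder Finiteness, and intersection-number relations, none of which appear in your argument.
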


This result demonstrates that the methods of $\cite{eskin2019counting}$, and the overall strategy employed by Margulis, can be used to obtain counts of closed geodesics inside $\mathrm{SL}\left(2,\mathbb{R}\right)$-orbit closures in strata. The seminal works of Eskin-Mirzakhani $\cite{eskin2018invariant}$ and Eskin-Mirzakhani-Mohammadi $\cite{eskin2015isolation}$ show that these orbit closures are affine invariant manifolds, locally cut out by $\mathbb{R}$-linear equations in period coordinates. We demonstrate, that in the simplest case of closed orbits, and the more difficult case of higher dimensional rank-1 orbit closures, the intersection theory of saddle connections is robust enough to leverage the constraints imposed by the linear equations defining the orbit closure to generalize the results of Eskin-Mirzakhani-Mohammadi. Our proof very closely follows that of Eskin-Mirzakhani-Rafi $\cite{eskin2019counting}$, and most of the methods they introduced carry over to our setting. It has been conjectured by Wright $\cite{wright2020tour}$ that a version of the asymptotic growth of the number of closed geodesics in strata in $\cite{eskin2019counting}$ should hold for general $\mathrm{SL}(2,\mathbb{R})$-closures in strata. In forthcoming work, we plan to use Theorem $\ref{Theorem1.1}$ to prove this conjecture for the lowest-dimensional $\mathrm{SL}(2,\mathbb{R})$-orbit closures (obtaining a version of Huber and Margulis' result for non-compact hyperbolic surfaces) and a significant generalization to the case of rank-1 $\mathrm{SL}(2,\mathbb{R})$ orbit closures.\\

This paper is organized as follows. In Section $\ref{Section2}$, we review the essential notions from dynamics on Veech surfaces and the geometry of Teichm{\"u}ller space. In Section $\ref{Section3}$, we study the intersection patterns for the $(q,\tau)$-regular triangulations introduced in $\cite{eskin2019counting}$ in the context of completely periodic surfaces. Finally, it is demonstrated in Section $\ref{Section4}$ that the results of Section $\ref{Section3}$ can be used to make Eskin-Mirzakhani-Raf's random walk argument work in this setting. In light of the relations between intersection numbers obtained in Section $\ref{Section3}$, there is very little new to check to extend the techniques from $\cite{eskin2019counting}$ to the setting of Teichm{\"u}ller curves. When the arguments carry over verbatim, we will only provide sketches of proofs, for the sake of clarity of exposition.

\section{Preliminaries}
\label{Section2}

\subsection{Teichm{\"u}ller metric and Teichm{\"u}ller curves}

In this subsection, we briefly review the metric structures we will be interested in, and the dynamics of Veech surfaces. All of the material here is standard, and we will sometimes follow $\cite{mcmullen2003billiards}$.
Let $S_{g}$ be a surface of genus $g$, satisfying $3g-3 > 0$. Let $\phi_1:S_g \rightarrow S_1$, $\phi_2:S_g \rightarrow S_2$, be two diffeomorphisms, where $S_1$ and $S_2$ have finite area. Denote by $\sim$ the equivalence relation
where $\phi_1 \sim \phi_2$ if $\phi_1 \circ \phi^{-1}_2$ is isotopic to a holomorphic diffeomorphism. Define

\begin{displaymath}
\mathcal{T}_{g}(S) = \big\{\phi:S_{g} \rightarrow S \ | \ \text{S is finite area }, \phi \ \text{a diffeomorphism}\big\} \big/ \sim.
\end{displaymath}
Given $\psi \in \text{Diff}^{+}\left(S_{g}\right)$, the group of orientation-preserving diffeomorphisms of $S_{g}$, we obtain a map $\mathcal{T}_{g}(S) \rightarrow \mathcal{T}_{g}(S)$ by precomposition by $\psi$. By the definition of the equivalence
relation $\sim$, if $\psi$ is contained in the normal subgroup $\text{Diff}^{0}\left(S_{g}\right)$ of diffeomorphisms isotopic to a holomorphic diffeomorphism, this map defined by precomposition is trivial. Therefore, we have an action of the mapping class group of $S_{g}$ 

\begin{displaymath}
\text{Mod}\left(S_{g} \right):= \text{Diff}^{+}\left(S_{g}\right) \big/ \text{Diff}^{0}\left(S_{g}\right)
\end{displaymath}
on $\mathcal{T}_{g}(S)$. The moduli space $\mathcal{M}_{g}(S)$ of $S_{g}$ is defined as 

\begin{displaymath}
\mathcal{M}_{g}(S):= \mathcal{T}_{g}(S)\big/ \text{Mod}\left(S_{g}\right).
\end{displaymath}

$\mathcal{T}_{g}(S)$ is a real-analytic space, and is homeomorphic to $\mathbb{R}^{6g-6}$. The cotangent space at $X \in \mathcal{T}_{g}(S)$ can be identified with $Q(X)$, the space of holomorphic quadratic differentials on $X$. The dual space (tangent space at $X$) is naturally identified with the space of harmonic Beltrami differentials on $X$, ${B}(X)$. Suppose $\mu = \mu(z)d\bar{z}\otimes(dz)^{-1} \in B(X)$ and $q = q(z)dz^2 \in Q(X)$. There is a natural non-degenerate pairing on $\mathcal{T}_g(S)$
\begin{displaymath}
\langle{\mu,q\rangle} = \int_X \mu(z)q(z)|dz|^2.
\end{displaymath}
The norm $||\mu||_T = \mathrm{sup}\left\{|\langle\{\mu,q\rangle\}:q \in Q(X), ||q||_1 = \int_X|q| = 1\right\}$ gives the $\textit{Teichm{\"u}ller metric}$ \ on $\mathcal{T}_g(S)$. This is the infinitesimal description of the Teichm{\"u}ller metric. The distance between two points in Teichm{\"u}ller space has a simple description in terms of dilatation. We have

\begin{displaymath}
d_{\mathcal{T}}\left(\left(\phi_1:S_g \rightarrow S_1\right),\left(\phi_2:S_g \rightarrow S_2\right)\right) = \frac{1}{2}\underset{\phi}{\mathrm{inf}} \; \mathrm{log} \; K_{\phi}
\end{displaymath}
where $\phi: X_1 \rightarrow X_2$ ranges over all quasi-conformal maps from $X_1$ to $X_2$ isotopic to $\phi_1\circ\phi_2^{-1}$ and $K_{\phi} \geq 1$ is the dilatation coefficient. Since the definition of the metric only depends on the holomorphic structure at $X$, the Teichm{\"u}ller metric on $\mathcal{T}_g(S)$ descends to a metric on $\mathcal{M}_{g}(S)$.  \\

It is well-known that a pair $(X,q)$ with $ q \in Q(X)$ and $q \neq 0$, generates a holomorphic embedding from the hyperbolic plane $\mathbb{H}$ with Kobayashi metric on $\mathbb{H}$ and Teichm{\"u}ller metric on $\mathcal{T}_g(S)$

\begin{displaymath}
\tilde{f}:\mathbb{H} \rightarrow \mathcal{T}_g(S).
\end{displaymath}
$\tilde{f}$ is an isometry. Passing to $\mathcal{M}_g = \mathcal{T}_g(S) \big/ \mathrm{Mod}(S_g)$, we obtain a $\textit{complex geodesic}$

\begin{displaymath}
f: \mathbb{H} \rightarrow \mathcal{M}_g(S).
\end{displaymath}
We summarize the constructions of $\tilde{f}$ and $f$, and the connection to dynamics. For $t$ a complex parameter, the Riemann surface $X_t = \tilde{f}(t)$ is characterized by the property that the complex dilatation $\mu_t$ of the extremal quasi-conformal map $\psi_t:X \rightarrow X_t$ is given by

\begin{equation}
\label{equation2.1}
\mu_t = \left(\frac{i-t}{i+t}\right) \cdot \frac{\bar{q}}{|q|}.
\end{equation}
Now, assume $q$ is the square of a holomorphic $1$-form $w$, $q = \omega^2$. Then, these complex geodesics have an interpretation in terms of the $\mathrm{SL}(2,\mathbb{R})$ action on $\Omega_1\mathcal{T}_g\left(p_1,...,p_k\right)$. In particular, any holomorphic $1$ -form $\omega$ yields, away from its zeroes, a flat metric Euclidean metric $|\omega|$ and an atlas of charts $U_i \rightarrow \mathbb{C}$ whose transition functions are translations. The $\mathrm{SL}(2,\mathbb{R})$-action on $\Omega_1\mathcal{T}_g\left(p_1,...,p_k\right)$ is defined by composition of these charts with matrices $A \in \mathrm{SL}(2,\mathbb{R})$ acting linearly on $\mathbb{C} \simeq \mathbb{R}^2$. $A \in \mathrm{SL}(2,\mathbb{R})$ can be interpreted as an affine map in these coordinate charts $A\cdot(X,\omega) = (X',\omega')$. This yields a quasi-conformal map $f: X \rightarrow X'$ with terminal holomorphic $1$-form $\omega'$. In the case that $A \in \mathrm{SO}(2,\mathbb{R})$, the map $f$ is conformal, so the action descends to a faithful action of $\mathrm{SL}(2,\mathbb{R})/\mathrm{SO}(2,\mathbb{R})$. One can show that this descended action induces an isometric injection

\begin{displaymath}
\mathrm{SL}(2,\mathbb{R})/\mathrm{SO}(2,\mathbb{R})(X,\omega) \rightarrow \mathcal{T}_g(S).
\end{displaymath}
Furthermore, composition with the projection $\pi:\mathcal{T}_g(S) \rightarrow \mathcal{M}_g(S)$ defines the complex geodesic

\begin{displaymath}
f: \mathbb{H} \rightarrow \mathcal{M}_g(S).
\end{displaymath}
One can check that if $t \in \mathbb{C}$ parametrizes the Riemann surfaces, the complex dilatation of the extremal quasi-conformal map $\psi_t: X \rightarrow X_t$ is given by

\begin{equation}
\mu_t = \left(\frac{i-t}{i+t}\right) \cdot \frac{\bar{\omega}}{|\omega|}
\end{equation}
which accords with Equation $\ref{equation2.1}$. Furthermore, $f$ factors through the quotient space $V = \mathbb{H} / \Gamma$, where $\Gamma = \left\{B \in \mathrm{Aut}(\mathbb{H}): f(B\cdot t) = f(t) \ \forall t \right\}$. In the case that $\Gamma$ is a lattice, we call the quotient map

\begin{displaymath}
f:V \rightarrow \mathcal{M}_g
\end{displaymath}
a Teichm{\"u}ller curve. $f$ is proper and generically injective. The image $f(V)$ is not a normal subvariety, but we will not distinguish between the Teichm{\"u}ller curve as a map $f$ and the normalization of $f(V)$.\\

Each non-zero holomorphic $1$-form $\omega$ determines a foliation $\mathcal{F}(\omega)$ of $X$ with singular points at the zeroes of $\omega$. There are two types of leaves $L$ of this foliation:

\begin{enumerate}
    \item Compact leaves, which are either saddle connections or closed loops.
    \item Noncompact leaves (infinite trajectories), which are copies of $\mathbb{R}$ or $[0,\infty)$ immersed in $X$.
\end{enumerate}

We say a non-compact leaf $L$ is $\textit{uniformly distributed}$ on $X$ if for any constant-speed (in the flat metric induced by $\omega$) immersion $\gamma:[0,\infty) \rightarrow X$, and any open set $U \subset X$ we have

\begin{displaymath}
\lim_{T \to \infty} \frac{|t \in [0,T]: \gamma(t) \in U|}{T} = \frac{\int_U \omega}{\int_X \omega}.
\end{displaymath}
We call a direction $\theta$ in which all infinite trajectories are uniformly distributed a $\textit{uniquely ergodic}$ direction. This leads us to a classical theorem of Veech, which will be the basis for our computations on bounds of intersection numbers.

\begin{theorem}[Veech,\cite{veech1989teichmuller}]
\label{theorem2.1}
Let $(X,\omega^2)$ generate a Teichm{\"u}ller curve. Then, for any  \ $\theta$, either

\begin{enumerate}
    \item All leaves of $\mathcal{F}(e^{i\theta}\omega)$ are compact, or
    \item $\theta$ is a uniquely ergodic direction.
\end{enumerate}
\end{theorem}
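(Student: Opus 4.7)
The plan is to exploit the defining property of a Teichmüller curve, namely that the Veech group $\Gamma \leq \mathrm{SL}(2,\mathbb{R})$ (the stabilizer of $(X,\omega)$ under the $\mathrm{SL}(2,\mathbb{R})$-action, modulo $\mathrm{SO}(2,\mathbb{R})$) is a lattice, so that $V = \mathbb{H}/\Gamma$ is a finite-volume hyperbolic surface with finitely many cusps. Directions $\theta$ on the flat surface correspond naturally to boundary points of $\mathbb{H}$ through the Iwasawa parametrization $r_\theta \mapsto e^{i\theta} \in \partial\mathbb{H}$, and the key question becomes whether $\theta$ corresponds to a parabolic fixed point of $\Gamma$ or not.

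First I would analyze the parabolic case. If $e^{i\theta}$ is fixed by some parabolic $\gamma \in \Gamma$, then $\gamma$ lifts to an affine automorphism of $(X,\omega)$ whose derivative is a parabolic element fixing the line of slope $\theta$. A standard argument (going back to Veech and Thurston) shows that the only way an affine automorphism with parabolic derivative can exist on a translation surface is if the surface decomposes as a finite union of metric cylinders in the direction $\theta$, with the affine map acting by a product of Dehn twists on these cylinders with commensurable moduli. In particular, every trajectory in direction $\theta$ is either a core curve of one of these cylinders or a saddle connection on its boundary, so all leaves of $\mathcal{F}(e^{i\theta}\omega)$ are compact, placing us in case $(1)$.

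Next I would handle the non-parabolic case via Masur's criterion. If $e^{i\theta} \in \partial \mathbb{H}$ is not a parabolic fixed point of $\Gamma$, then the geodesic ray $\{g_t r_\theta \cdot o : t \geq 0\}$ in $\mathbb{H}$ does not project to a ray escaping into a cusp of $V = \mathbb{H}/\Gamma$; since $V$ has finite hyperbolic volume and its cusps are the only way to escape to infinity, the projected ray must be recurrent to some compact subset of $V$. Under the Teichmüller curve map $f: V \to \mathcal{M}_g(S)$, which is proper, this recurrence passes to recurrence of the Teichmüller geodesic $g_t r_\theta \cdot (X,\omega)$ to a compact subset of the stratum. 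Masur's criterion for unique ergodicity then asserts that whenever the forward Teichmüller geodesic of a translation surface is recurrent to a compact set in the stratum, the vertical foliation of the initial surface is uniquely ergodic. Applying this to the surface $r_\theta \cdot (X,\omega)$, whose vertical foliation is $\mathcal{F}(e^{i\theta}\omega)$, gives case $(2)$.

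The hardest input here is not the dichotomy itself but the two deep ingredients it relies on: Veech's structure theorem that parabolic elements of $\Gamma$ are realized only by Dehn twists along cylinder decompositions (so that parabolic directions are exactly the completely periodic ones), and Masur's criterion relating recurrence of a Teichmüller geodesic in moduli space to unique ergodicity of its vertical foliation. Once both are in hand, the exhaustion of possibilities for $e^{i\theta} \in \partial \mathbb{H}$ (parabolic versus non-parabolic with respect to the lattice $\Gamma$) immediately yields the stated dichotomy; no additional surface-specific computation is required beyond verifying that the Veech group condition indeed puts every direction into exactly one of these two buckets.
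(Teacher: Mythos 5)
The paper states Theorem~\ref{theorem2.1} as a citation to Veech's 1989 paper and gives no proof of its own, so there is no internal argument to compare yours against; I can only assess your proof on its own terms.

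Your argument is correct and is, in fact, the now-standard modern proof of the Veech dichotomy (see, e.g., the Masur--Tabachnikov survey): parabolic fixed points of the lattice Veech group $\Gamma$ yield affine automorphisms with parabolic derivative, hence cylinder decompositions (Thurston/Veech), giving complete periodicity; non-parabolic directions give geodesic rays in $\mathbb{H}/\Gamma$ that cannot eventually stay in a cusp, hence recur to a compact set, and properness of $f:V\to\mathcal{M}_g(S)$ pushes this to recurrence of the Teichm\"uller geodesic in $\mathcal{M}_g(S)$, after which Masur's criterion delivers unique ergodicity. Two small precision points: first, Masur's criterion is usually stated for recurrence of the projection to $\mathcal{M}_g(S)$, which is exactly what properness of $f$ gives you, so you do not actually need the stronger claim of recurrence in the stratum (the stratum is not closed, so compactness there is a separate matter); second, the step ``escape to infinity $\Rightarrow$ eventually trapped in a single cusp neighborhood $\Rightarrow$ endpoint is a parabolic fixed point'' deserves a sentence of justification (one cusp excursion does not suffice; you need the geodesic to eventually remain in a fixed horoball), though this is standard for lattices. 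It is also worth noting that this route via Masur's criterion is anachronistic relative to the citation: Veech's 1989 paper predates Masur's 1992 criterion, and his original proof of the unique ergodicity half proceeded differently. That does not affect correctness, only the attribution of the method.
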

Moreover, we have the following corollary.

\begin{corollary}
\label{corollary2.1}
If $(X,\omega^2)$ generates a Teichm{\"u}ller curve, and has a saddle connection in the direction \ $\theta$, then $X$ admits a cylinder decomposition in the direction of \ $\theta$ where each cylinder is maximal (bounded by saddle connections). 
\end{corollary}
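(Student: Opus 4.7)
The plan is to derive the cylinder decomposition directly from Veech's dichotomy together with the standard structure of directional foliations on a flat surface all of whose leaves are compact.

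First, I would apply Theorem \ref{theorem2.1} to rule out the uniquely ergodic alternative. The hypothesis gives a saddle connection $\gamma$ in direction $\theta$, which is a compact leaf of $\mathcal{F}(e^{i\theta}\omega)$ joining two singular points (or one singular point to itself). A uniquely ergodic direction, by definition in the excerpt, requires every infinite trajectory to equidistribute, and in particular it cannot coexist with the "completely periodic" alternative of the Veech dichotomy: the presence of $\gamma$ shows we are not in the uniquely ergodic case, so option (1) of Theorem \ref{theorem2.1} holds and every leaf of $\mathcal{F}(e^{i\theta}\omega)$ is compact.

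Next I would promote this leafwise statement to a global cylinder decomposition. Away from the zeros of $\omega$, the foliation $\mathcal{F}(e^{i\theta}\omega)$ is a smooth foliation by compact leaves, which are either saddle connections (finitely many, since $\omega$ has finitely many zeros) or closed regular trajectories. Around any regular closed trajectory $\ell$, a small flat neighborhood foliates by parallel closed trajectories of the same length, by the local product structure of the $|\omega|$-metric; enlarging this neighborhood maximally, one obtains an open flat cylinder $C_\ell$ isometric to $(\mathbb{R}/L\mathbb{Z}) \times (0,w)$ whose interior consists entirely of regular closed trajectories. The boundary of $C_\ell$ cannot contain any regular closed trajectory (maximality) and cannot contain an infinite leaf (all leaves are compact), so it must be a finite union of saddle connections in direction $\theta$, each abutting a zero of $\omega$.

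Finally I would argue that the union $\bigcup_\ell C_\ell$ of these maximal cylinders exhausts $X$ up to the finite union of saddle connections in direction $\theta$ and the zeros of $\omega$. Every regular point lies on some compact leaf; if that leaf is a regular closed trajectory, the point lies inside some $C_\ell$, while if it lies on a saddle connection it is on the boundary of one (or two) of the cylinders. Since $X$ is compact and each cylinder has positive area, only finitely many cylinders occur, and their closures tile $X$ along the saddle connections, yielding the desired cylinder decomposition with each cylinder maximal and bounded by saddle connections. The only point that requires any care is the maximality/boundary analysis in the previous paragraph, which is the heart of the corollary but is a purely local consequence of the flat structure once Veech's dichotomy has been invoked.
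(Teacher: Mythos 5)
Your second and third paragraphs are correct: given that every leaf of $\mathcal{F}(e^{i\theta}\omega)$ is compact, the passage to a maximal cylinder decomposition via the local flat product structure is exactly the standard argument, and there is no issue there. The paper itself offers no written proof of this corollary, so there is nothing to compare against line by line.

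The first paragraph, however, has a real gap. You need to establish alternative (1) of Theorem \ref{theorem2.1}, and you do so by asserting that the saddle connection $\gamma$ rules out alternative (2). But the paper defines a uniquely ergodic direction to be one in which every \emph{infinite} trajectory equidistributes; a saddle connection is a \emph{compact} leaf, not an infinite trajectory, and its existence does not on its own contradict that condition. On a general translation surface a direction with a saddle connection need not be completely periodic: the complement of the saddle connections in that direction can be a single minimal component of full measure whose trajectories equidistribute. Ruling this out on a Veech surface is precisely the nontrivial content of the corollary, and it uses the lattice property: a direction containing a saddle connection is stabilized by a parabolic affine automorphism, and such a parabolic is a composition of Dehn twists about parallel cylinders, which forces the direction to be completely periodic. (Equivalently, one should read option (2) of Theorem \ref{theorem2.1} in its usual form as "minimal and uniquely ergodic"; a saddle connection is a non-dense compact leaf, so minimality fails, and then the disjunction yields option (1).) Without one of these inputs, the deduction of alternative (1) from the mere existence of $\gamma$ is not warranted by the literal statement of the theorem, and that deduction is the substance of the corollary.
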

If a surface $(X,\omega^2)$, not necessarily generating a Teichm{\"u}ller curve, satisfies the conclusions of Theorem $\ref{theorem2.1}$, we say the surface is $\textit{completely periodic}$.

\begin{subsection}{Period Coordinates and Holonomy Vectors}

We briefly recall the relationship between the equations satisfied in local period coordinates as the Teichmuller curve is traversed, and the relations in homology that we will use.
For any $(X,\omega^2) \in \Omega\mathcal{T}_g(S)\left(p_1,...,p_k\right)$, there exists a triangulation $T$ of the underlying surface $X$ by saddle connections. Denote the zeroes of $\omega^2$ by $Z(\omega)$, where $|Z(\omega)|=k$. One can choose $h = 2g + |Z(\omega)| - 1$ directed edges $\left\{w_i\right\}_{i=1}^h$ of $T$, and an open neighborhood $U \subset \Omega\mathcal{T}_g(S)(p_1,...,p_l)$ of $(X,\omega^2)$ in its stratum, so that there exists an open analytic embedding:

\begin{displaymath}
\phi_{T,\omega}: U \rightarrow H^1(X,Z(\omega)) \cong \mathbb{C}^{2g + Z(\omega) -1}
\end{displaymath}
called the period map. It is defined by $(X,\omega^2)$ to be the relative class $[\omega] \in H^1(X,Z(\omega))$ which satisfies
$\left<[\omega],w_i\right> = \int_{w_i}\omega$ for all $w_i$ viewed as relative classes in $H_1(X,Z(\omega))$. For any other geodesic triangulation $T'$, the map $\phi_{T',\omega}\circ \phi^{-1}_{T,\omega}$ is linear.\\

We say a closed subset $M \subset \Omega\mathcal{T}_g(p_1,...,p_k)(S)$ is locally defined by real linear equations if, for each point $(X,\omega^2) \in M$ and open neighborhood $U$ as above, there exists a finite set of complex vector spaces $S_i \subset H^1(X,Z(\omega))$, invariant under complex conjugation, so that:

\begin{displaymath}
\phi_{T,\omega}(U\cap M) = \phi_{T,\omega}(U)\cap \left(\cup_{i=1}^k S_i \right).
\end{displaymath}
A consequence of the uniformization theorem is that every element $(X,\omega^2) \in \Omega\mathcal{T}_g(S)\left(p_1,...,p_k\right)$ can be presented in the form

\begin{displaymath}
(X,\omega^2) = (P,dz)\big/ \sim
\end{displaymath}
for a polygon $P \subset \mathbb{C}$, and the $1$-form $dz$ on $\mathbb{C}$. The reason for this is that one can construct a geodesic triangulation of the flat surface $(X,|\omega|)$, with $Z(\omega)$ among its vertices. $X$ can then be presented as a quotient $\sim$ of the edges of a collection of triangles. One can check that the periods $\int_{w_i} \omega$ defining $\phi_{T,\omega}$ correspond to the vectors $\left\{v_i\right\} \in \mathbb{C}$, the edges of the polygon $P$. Furthermore, the linear relations with real coefficients that these vectors satisfy correspond to the linear relations cutting out the complex vector subspaces $S_i \subset H^1(X,Z(\omega))$.

\end{subsection}

\begin{subsection}{General orbit closures and complete periodicity}

Henceforth, we will assume, without loss of generality, that when a closed subset $V \subset \Omega \mathcal{T}_g(p_1,...,p_k)(S)$ is locally defined by real linear equations, the finite subset $\left\{S_i\right\}$ of complex subspaces of $H^1(X,Z(\omega))$ consists of a single vector subspace $S$. Since $S$ comes with a complex conjugation, $S$ splits as

\begin{displaymath}
S = S^{\mathbb{R}} \oplus iS^{\mathbb{R}}
\end{displaymath}
where $S^{\mathbb{R}}:= S \ \cap H^1(X,Z(\omega),\mathbb{R})$. If $p:H^1(X,Z(\omega),\mathbb{R}) \rightarrow H^1(X,\mathbb{R})$, the restriction of the intersection form of $H^1(X,\mathbb{R})$ to $p\left(S^{\mathbb{R}}\right)$ is non-degenerate. Thus, there is a number $0 < r \leq 2g$ such that $\mathrm{dim}_{\mathbb{R}}p\left(S^{\mathbb{R}}\right) = 2r$. We call $r$ the $\textit{rank}$ of $V$.

\begin{theorem}[Eskin-Mirzakhani-Mohammadi,\cite{eskin2015isolation}]
For any $(X,\omega^2) \in \Omega\mathcal{T}_g(S)(p_1,...,p_k) $, the orbit closure \ $\overline{\mathrm{SL}\left(2,\mathbb{R}\right)\cdot \omega^2}$ is locally defined by real linear equations.
\end{theorem}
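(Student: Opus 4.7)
My plan is to deduce the orbit closure statement from a measure classification theorem, in the spirit of Ratner's rigidity theorems for unipotent flows on homogeneous spaces but adapted to the non-homogeneous Teichm\"uller setting. First I would prove that every $\mathrm{SL}(2,\mathbb{R})$-invariant, $P$-ergodic probability measure $\mu$ on $\Omega_1\mathcal{M}_g(S)(p_1,\ldots,p_k)$, where $P$ denotes the upper-triangular subgroup, is \emph{affine}: its support is locally cut out in period coordinates by real linear equations, and $\mu$ itself is Lebesgue along those coordinates. Given such a classification, the orbit closure statement follows by a standard equidistribution argument---form Ces\`aro averages of pushforwards of the point mass at $(X,\omega^2)$ under $g_t$ and the horocycle flow, extract a weak-$\ast$ limit $\mu$, which is $\mathrm{SL}(2,\mathbb{R})$-invariant, and invoke the measure classification to write $\mu$ as a convex combination of affine measures, each supported on an affine invariant submanifold containing $(X,\omega^2)$. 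The smallest of these containing $(X,\omega^2)$ then equals $\overline{\mathrm{SL}(2,\mathbb{R})\cdot \omega^2}$.

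For the measure classification itself, I would carry out the \emph{exponential drift} scheme of Eskin--Mirzakhani. Fix a $P$-invariant, $P$-ergodic measure $\mu$, let $L$ be the tangent subspace (in period coordinates) to the smallest real-linear subvariety containing $\mathrm{supp}\,\mu$, and aim to show $\mathrm{supp}\,\mu$ actually fills that subvariety. Working along unstable leaves of the $g_t$-action, pick two $\mu$-generic nearby points $x$ and $y = x + v$ with $v$ not tangent to $L$; under $g_t$ the difference evolves linearly, with one coordinate growing like $e^t$ and the other shrinking like $e^{-t}$. Waiting until $g_t(y-x)$ has reached a definite transverse size to $L$, and conditioning on return of $g_t x$ to a good compact set, one extracts via an $L^1$-limit of time averages a new direction of invariance of $\mu$ lying outside $L$. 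Iterating contradicts the minimality of $L$ unless $\mathrm{supp}\,\mu$ already saturates the affine manifold with tangent space $L$.

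The hard part, and the main obstacle, is the drift step itself, which requires quantitative substitutes for the polynomial divergence of unipotent orbits available in the homogeneous setting. One needs Forni-type bounds on the Kontsevich--Zorich cocycle to rule out slow escape in relative cohomology, quantitative $g_t$-recurrence to compact subsets of the stratum via the Athreya--Margulis thickness function and Minsky--Weiss non-divergence for horocycles, and a delicate conditional-measure argument ensuring that the drifted points remain $\mu$-generic. Keeping these cocycle estimates uniform over the very long $g_t$-segments used in the drift, and handling the boundary behavior near smaller strata and near loci of short saddle connections, is the essential technical heart---exactly the point where the analogy with Ratner's work ceases to be soft and genuinely new exponential-drift and factor-measure estimates must be developed.
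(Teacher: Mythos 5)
The paper does not actually prove this theorem; it cites it directly from Eskin--Mirzakhani--Mohammadi \cite{eskin2015isolation}, with the underlying measure classification coming from Eskin--Mirzakhani \cite{eskin2018invariant}, so there is no in-paper argument to compare against. Your account of the measure-classification half is a reasonable high-level summary of the published strategy: the ``$P$-invariant implies $\mathrm{SL}(2,\mathbb{R})$-invariant and affine'' theorem, the exponential drift mechanism as a substitute for polynomial divergence of unipotents, and the supporting analytic inputs (Forni-type hyperbolicity of the Kontsevich--Zorich cocycle, Athreya--Margulis and Minsky--Weiss nondivergence, conditional measures along unstable leaves) are all correctly placed.

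The gap is in your second paragraph, where you pass from measure classification to the orbit-closure statement by ``a standard equidistribution argument.'' That step is not standard here and is in fact the main new content of \cite{eskin2015isolation}. Taking Ces\`aro averages of $\mathrm{SO}(2)$- and $g_t$-pushforwards of $\delta_{(X,\omega^2)}$ and extracting a weak-$\ast$ limit $\mu$ gives (after nondivergence) a $P$-invariant probability measure supported in the orbit closure, and each ergodic component of $\mu$ is affine; but nothing in this soft argument forces any ergodic component to contain $(X,\omega^2)$ in its support, nor forces the supporting affine submanifold of such a component to exhaust the orbit closure rather than be a proper affine submanifold of it. Ruling these out is exactly the isolation/avoidance theorem: one must show that an $\mathrm{SL}(2,\mathbb{R})$-orbit not contained in a given affine invariant submanifold $M$ spends a vanishing fraction of its time near $M$, establish countability and weak-$\ast$ semicontinuity of the family of ergodic affine measures, and run an inductive descent on the dimension of the candidate affine submanifolds. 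Without that machinery the deduction ``the smallest affine submanifold in the decomposition containing $(X,\omega^2)$ equals $\overline{\mathrm{SL}(2,\mathbb{R})\cdot\omega^2}$'' does not close.
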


\begin{theorem}[Wright,\cite{wright2015cylinder}]
Any \ $\mathrm{SL}\left(2,\mathbb{R}\right)$-orbit closure $V = \overline{\mathrm{SL}\left(2,\mathbb{R}\right)\cdot \omega^2}$ of rank \ $1$ is completely periodic.
\end{theorem}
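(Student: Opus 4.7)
The plan is to fix an arbitrary $(X,\omega^2) \in V$ that admits a saddle connection in some direction $\theta$, rotate so that $\theta = 0$, and show every horizontal trajectory is either closed or a saddle connection. Recall that on $(X,|\omega|)$ the horizontal foliation decomposes $X$ into finitely many horizontal cylinders $C_1,\ldots,C_k$ bounded by horizontal saddle connections, together with finitely many \emph{minimal components} $M_1,\ldots,M_\ell$ on which the horizontal flow is minimal. Complete periodicity amounts to proving that $\ell=0$ whenever any horizontal saddle connection exists, so I would argue by contradiction and suppose some $M_j$ occurs.

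First I would invoke the rank $1$ hypothesis at the cohomological level. By definition, $\dim_{\mathbb R} p(S^{\mathbb R}) = 2$, and since $p(\mathrm{Re}\,[\omega])$ and $p(\mathrm{Im}\,[\omega])$ lie in $p(S^{\mathbb R})$ and are linearly independent (both nonzero, one horizontal one vertical), they span this 2-dimensional space. Thus the image $p(T_\omega V)$ of the absolute part of the tangent space is exhausted by the infinitesimal $\mathrm{SL}(2,\mathbb R)$-action. I would combine this with Wright's cylinder deformation theorem: the standard shear supported on any horizontal cylinder $C_i$ defines a class in $H^1(X,Z(\omega);\mathbb C)$ which, after suitable grouping into $V$-parallel equivalence classes, is tangent to $V$. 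Projecting to $H^1(X;\mathbb R)$, the class of each core curve $\gamma_i$ of $C_i$ must therefore be proportional to $p(\mathrm{Re}\,[\omega])$, so all horizontal cylinders are $V$-parallel.

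Next I would exploit this rigidity to exclude the minimal component $M_j$. Restricting $\omega$ to the (possibly bordered) translation surface $M_j$ yields a holomorphic 1-form whose horizontal and vertical absolute periods span a 2-dimensional subspace of $H^1(M_j;\mathbb R)$. Using the long exact sequence of the pair $(X, X \setminus M_j)$ and the fact that the cylinders $C_i$ contribute zero flux through $\partial M_j$, the horizontal flux through $M_j$ defines a class in $p(T_\omega V)$ that is not proportional to $p(\mathrm{Re}\,[\omega])$ unless the minimal component is absent. More concretely, I would use Smillie-Weiss (Theorem 2.1 in this paper, applied inside $V$ via the $\mathrm{SL}(2,\mathbb R)$-invariance of $V$ and the closed horocycle orbit closures) to produce a limit $(X',\omega'^2) \in V$ where $M_j$ has degenerated into a union of cylinders; the difference of cylinder data between $(X,\omega^2)$ and $(X',\omega'^2)$ exhibits a tangent direction in $p(S^{\mathbb R})$ transverse to $\langle p(\mathrm{Re}\,[\omega]), p(\mathrm{Im}\,[\omega])\rangle$, violating $\mathrm{rank}(V) = 1$.

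The main obstacle is the middle step: to convert the presence of a minimal component into an actual additional absolute tangent class in $p(S^{\mathbb R})$, rather than only a relative class in $\ker p$. A minimal component can in principle be invisible to absolute cohomology if its flux classes already coincide with those of the surrounding cylinders, so the argument must use the cylinder deformation theorem to \emph{deform the cylinders only}, producing an element of $T_\omega V$ that agrees with a cylinder shear on $\bigsqcup C_i$ but vanishes on $M_j$; subtracting this from the infinitesimal horocycle generator then isolates a nontrivial horizontal period supported on $M_j$. Verifying that such a deformation lies in $V$ is exactly where Wright's equivalence-class analysis is invoked, and once that is in place, the resulting extra dimension in $p(T_\omega V)$ contradicts rank $1$, completing the proof.
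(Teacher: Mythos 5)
The paper does not prove this statement itself: it is cited verbatim from Wright \cite{wright2015cylinder}, so the relevant comparison is with Wright's published argument, not with anything internal to this paper.

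Your outline assembles the right ingredients (rank 1 forcing $p(S^{\mathbb R}) = \langle p(\mathrm{Re}\,[\omega]), p(\mathrm{Im}\,[\omega])\rangle$, the Cylinder Deformation Theorem, and Smillie--Weiss), and you correctly flag the crux, but the two load-bearing steps are not actually established. First, the inference ``projecting each core curve to $H^1(X;\mathbb R)$ gives something proportional to $p(\mathrm{Re}\,[\omega])$, hence all horizontal cylinders are $V$-parallel'' is not valid: the Cylinder Deformation Theorem only makes shears of entire $V$-equivalence classes tangent to $V$, not shears of individual cylinders, so you only learn that each \emph{class-wise} combination $\sum_{C\in\mathcal E} h_C\, p(I_{\gamma_C})$ lies in the two-dimensional space, which places no constraint on the individual $p(I_{\gamma_C})$. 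Rank 1 does not by itself collapse all horizontal cylinders into one equivalence class; that requires Wright's separate cylindrical-stability analysis. Second, and more seriously, the subtraction argument (horocycle generator minus cylinder shear yields a tangent class $\eta$ supported on the minimal components) only produces a class in $H^1(X,Z(\omega);\mathbb R)$; it is an ``$\mathrm{Im}$-type'' class, so the only thing rank 1 would be violated by is $p(\eta)\notin \mathbb R\cdot p(\mathrm{Im}\,[\omega])$, and nothing you have written rules out the degenerate possibility that $p(\eta)$ is proportional to $p(\mathrm{Im}\,[\omega])$ (or even zero). Indeed in the extreme case with no horizontal cylinders at all, $\eta=[\mathrm{Im}\,\omega]$ and the argument gives no contradiction whatsoever, yet your hypothesis (a horizontal saddle connection) does not guarantee a horizontal cylinder exists. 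Wright avoids this by going a different route: he shows a horizontally periodic surface in a rank 1 $V$ has a single $V$-equivalence class of full area, and then uses his cylinder-persistence result (Theorem 1.10 of \cite{wright2015cylinder}) along the convergent horocycle sequence $u_{t_n}\cdot(X,\omega)\to (X',\omega')$ to transport that full-area class back to $(X,\omega)$ itself, forcing the horizontal cylinders to exhaust the area and hence the absence of minimal components. That persistence/area argument, rather than a flux-independence argument, is what closes the proof, and it is missing from your proposal.
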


\end{subsection}

\begin{subsection}{Extremal lengths and thick subsurfaces}

We collect a series of useful results referenced in $\cite{eskin2019counting}$ to establish the bounds necessary between intervals of the random walk.
By a curve, we will mean the free homotopy class of a non-trivial, simple closed curve on $S_g$. Given a curve $\alpha$ on $S_g$ and $X \in \mathcal{T}_g(S)$, let $\ell_X(\alpha)$ denote the hyperbolic length of the unique geodesic in the homotopy class of $\alpha$. The extremal length of a curve $\alpha$ on $X$ is defined by:

\begin{displaymath}
\mathrm{Ext}_X(\alpha):= \underset{\rho}{\mathrm{sup}}\frac{\ell_{\rho}(\alpha)^2}{\mathrm{Area}(X,\rho)},
\end{displaymath}
where $\underset{\rho}{\mathrm{sup}}$ is taken over all metrics conformally equivalent to $X$. Denoting the set of curves on $S_g$ by $\textbf{S}_g$, we have the following relationship between extremal length and Teichm{\"u}ller distance:

\begin{theorem}[Kerchoff,\cite{kerckhoff1978asymptotic}]
For $X,Y \in \mathcal{T}_g(S)$,

\begin{displaymath}
d_{\mathcal{T}}(X,Y) = \underset{\beta \in \textbf{S}_g}{\mathrm{sup}} \mathrm{log} \left(\frac{\sqrt{\mathrm{Ext}_X(\beta)}}{\sqrt{\mathrm{Ext}_Y(\beta)}}\right).
\end{displaymath}
\end{theorem}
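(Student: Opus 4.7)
The plan is to prove Kerckhoff's equality by establishing the two corresponding inequalities separately. The direction
\[
d_\mathcal{T}(X,Y) \geq \sup_{\beta \in \textbf{S}_g} \log\sqrt{\mathrm{Ext}_X(\beta)/\mathrm{Ext}_Y(\beta)}
\]
is a direct consequence of the classical quasiconformal distortion of extremal length, whereas the reverse inequality is the deeper statement and requires extending extremal length continuously to the space of measured foliations, together with the explicit action of the Teichm{\"u}ller geodesic flow on those foliations.

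For the first direction, I would appeal to the fact that if $f: Y \to X$ is any $K$-quasiconformal map in the appropriate homotopy class, then Gr{\"o}tzsch's modulus inequality applied to annular neighborhoods of closed curves yields $\mathrm{Ext}_X(\beta) \leq K \cdot \mathrm{Ext}_Y(\beta)$ for every $\beta \in \textbf{S}_g$. Rearranging and taking logs gives $\log\sqrt{\mathrm{Ext}_X(\beta)/\mathrm{Ext}_Y(\beta)} \leq \frac{1}{2}\log K$. Taking the infimum over admissible $f$ -- which by Teichm{\"u}ller's existence theorem equals $e^{2 d_\mathcal{T}(X,Y)}$ -- and then the supremum over $\beta$ yields the claim.

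For the reverse direction, I would extend $\mathrm{Ext}_X$ and $\mathrm{Ext}_Y$ continuously to the space of measured foliations on $S_g$, noting the quadratic scaling $\mathrm{Ext}_X(w \cdot F) = w^2\, \mathrm{Ext}_X(F)$. Let $q$ be the initial quadratic differential on $X$ of the Teichm{\"u}ller map to $Y$, and let $F_v(q)$ denote its vertical foliation. From the explicit form of the Teichm{\"u}ller flow in the charts given by $q$, the transverse measure of $F_v(q)$ at $Y = g_t \cdot X$ is $e^{-t}$ times its transverse measure at $X$, and hence $\mathrm{Ext}_X(F_v(q))/\mathrm{Ext}_Y(F_v(q)) = e^{2t}$ where $t = d_\mathcal{T}(X,Y)$. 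Invoking the density of weighted simple closed curves in the space of measured foliations, a sequence $w_n \beta_n \to F_v(q)$ satisfies $\mathrm{Ext}_X(\beta_n)/\mathrm{Ext}_Y(\beta_n) \to e^{2t}$ since the squared weights cancel in the ratio, producing the desired bound on the supremum.

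The main obstacle will be the continuity of extremal length on the space of measured foliations -- this is itself one of the central contributions of Kerckhoff's original paper and requires a careful analysis of moduli of quadrilaterals (equivalently, Dirichlet's principle for harmonic measures). One must also ensure that the approximation $w_n \beta_n \to F_v(q)$ keeps both $\mathrm{Ext}_X(\beta_n)$ and $\mathrm{Ext}_Y(\beta_n)$ in the ratio bounded away from zero, which follows from the filling property of $F_v(q)$ on the closed surface.
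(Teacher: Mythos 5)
The paper does not prove this result; it is quoted verbatim as a background fact from Kerckhoff's 1978 paper and used as a black box, so there is no ``paper proof'' to compare against. Your sketch is essentially Kerckhoff's own argument and is correct in outline: the easy inequality from Gr\"otzsch-type quasiconformal distortion of modulus together with Teichm\"uller's existence theorem, and the reverse inequality by producing the maximizer as the vertical foliation of the initial differential of the Teichm\"uller map, then transferring to simple closed curves via the continuity of $\mathrm{Ext}$ on $\mathcal{MF}$ and the density of weighted curves. One small imprecision: the sentence ``the transverse measure of $F_v(q)$ at $Y$ is $e^{-t}$ times its transverse measure at $X$'' conflates the fixed measured foliation $F_v(q)$ (which does not change) with the vertical foliation of the \emph{terminal} differential $q'$ on $Y$. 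The clean statement is that the vertical foliations of $q$ and $q'$ agree topologically with transverse measures differing by $e^{t}$, so the quadratic differential on $Y$ realizing $F_v(q)$ as its vertical foliation is $e^{-2t}q'$, giving $\mathrm{Ext}_Y(F_v(q)) = e^{-2t}\,\mathrm{Ext}_X(F_v(q))$ and hence the desired ratio $e^{2t}$. With that fix the argument is complete modulo citing Kerckhoff for the continuity of extremal length on measured foliations, which you correctly flag as the genuinely hard ingredient.
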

It will be useful for our purposes to have a simple description of the Teichm{\"u}ller distance, in the region of Teichm{\"u}ller space where simple closed curves have small extremal length. Let $\mathcal{A} = \left\{\alpha_1,...,\alpha_j\right\}$ be a collection of disjoint simple closed curves, with $\mathcal{A} \subset \textbf{S}_g$. Fix $\epsilon > 0$, and define:

\begin{displaymath}
\mathcal{T}^{\epsilon}(\mathcal{A}) = \left\{X \in \mathcal{T}_g(S) \; | \; \mathrm{Ext}_X(\alpha_i) \leq \epsilon \; \forall j \right\}.
\end{displaymath}
Using Fenchel-Nielsen coordinates on $\mathcal{T}_g(S)$, we have a map $\varphi_{\mathcal{A}}: \mathcal{T}^{\epsilon}(\mathcal{A}) \rightarrow (\mathbb{H})^j$ defined by

\begin{displaymath}
\varphi_{\mathcal{A}}(X) = \left\{\left(\theta_i(X),\frac{1}{\ell_X(\alpha_i)}\right)\right\}_{i=1}^j
\end{displaymath}
where $\theta_i$ is the Fenchel-Nielsen twist coordinate around $\alpha_i$. This furnishes a map

\begin{displaymath}
\varphi:\mathcal{T}^{\epsilon}(\mathcal{A}) \rightarrow (\mathbb{H})^j \times \mathcal{T}_g(S \setminus \mathcal{A})
\end{displaymath}
where $\mathcal{T}_g(S \setminus \mathcal{A})$ is the Teichm{\"u}ller quotient space obtained by collapsing all $\left\{\alpha_i\right\}_{i=1}^j$. We have the following result of Minsky.

\begin{theorem}[Minsky,\cite{minsky1996extremal}]
\label{theorem2.3}
There exists $\varepsilon_0 > 0$ such that for all $X, Y \in \mathcal{T}^{\epsilon}(\mathcal{A})$,
\begin{displaymath}
|d_{\mathcal{T}}(X,Y) - d_{\mathcal{A}}(\varphi(X),\varphi(Y))| = O(1)
\end{displaymath}
where $d_{\mathcal{A}}(\cdot,\cdot)$ is the sup-metric on \ $\mathbb{H}^j \times \mathcal{T}(S \setminus \mathcal{A})$.

\end{theorem}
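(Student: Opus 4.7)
The plan is to establish both inequalities $d_{\mathcal{T}}(X,Y) \leq d_{\mathcal{A}}(\varphi(X),\varphi(Y)) + O(1)$ and $d_{\mathcal{T}}(X,Y) \geq d_{\mathcal{A}}(\varphi(X),\varphi(Y)) - O(1)$ by decomposing each surface $X \in \mathcal{T}^{\epsilon}(\mathcal{A})$ into (i) standard collar annuli $A_i$ around the short curves $\alpha_i$ and (ii) the thick complement, then matching each factor of the sup-metric on $\mathbb{H}^j \times \mathcal{T}(S\setminus \mathcal{A})$ to contributions from one of these regions. The point is that, once the collar widths are chosen uniformly for $\epsilon \leq \varepsilon_0$, the qc geometry across collars and the qc geometry of the complement decouple up to $O(1)$.

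For the upper bound I would construct an explicit quasi-conformal map $f:X \to Y$ piecewise. On each collar $A_i$, since the Teichm\"uller space of a marked annulus is naturally isometric to $\mathbb{H}$ with point $(\theta_i, 1/\ell_i)$, I take the extremal Teichm\"uller map realizing $d_{\mathbb{H}}(\varphi_i(X),\varphi_i(Y))$. On the collapsed thick surface, I take the Teichm\"uller map realizing $d_{\mathcal{T}(S\setminus \mathcal{A})}(X',Y')$. Because the two collar boundaries have comparable lengths of order $\ell_i^{1/2}$ with a definite width of collar available for interpolation, the pieces can be glued with only bounded additional dilatation. The log-dilatation of the assembled map is then at most the maximum of the pieces plus $O(1)$, which is exactly the sup-metric distance.

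For the lower bound I would apply Kerckhoff's theorem and exhibit, for each coordinate of $d_{\mathcal{A}}(\varphi(X),\varphi(Y))$, a witnessing curve $\beta$ whose log-ratio $\tfrac{1}{2}\log(\mathrm{Ext}_X(\beta)/\mathrm{Ext}_Y(\beta))$ recovers that coordinate up to $O(1)$. Curves supported in the thick subsurface detect the $\mathcal{T}(S\setminus\mathcal{A})$ factor because, for such $\beta$, $\mathrm{Ext}_X(\beta)$ and $\mathrm{Ext}_{X'}(\beta)$ agree up to a multiplicative constant depending only on $\epsilon$. For each index $i$, the curve $\alpha_i$ itself contributes the radial part $\log(\ell_i^X/\ell_i^Y)$ of the $i$-th hyperbolic distance, while a curve crossing $\alpha_i$ once (and otherwise short in the thick subsurface) contributes a term that, when combined with the twist coordinate $\theta_i$, reproduces the full $d_{\mathbb{H}}((\theta_i^X,1/\ell_i^X),(\theta_i^Y,1/\ell_i^Y))$ in the log-ratio.

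The main obstacle is the uniform short-curve extremal length estimate: any curve $\beta$ on $X \in \mathcal{T}^{\epsilon}(\mathcal{A})$ satisfies
\begin{displaymath}
\mathrm{Ext}_X(\beta) \;\asymp\; \mathrm{Ext}_{X'}(\beta') \;+\; \sum_{i=1}^{j} i(\beta,\alpha_i)^{2}\, F\!\left(\theta_i^X, \tfrac{1}{\ell_i^X}\right),
\end{displaymath}
where $F$ is the explicit hyperbolic-geometry weight on $\mathbb{H}$ coming from the modulus of the collar together with the Fenchel--Nielsen twist. This is a Gr\"otzsch-type decomposition of admissible conformal metrics across thin collars, using that the modulus of the $\epsilon$-collar of $\alpha_i$ grows like $1/\ell_i$ and that the twist controls the homotopy class of the shortest arc across the collar. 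Once this estimate is in hand, taking the supremum curve-by-curve in Kerckhoff's formula recovers the sup over the two types of factors, yielding both the upper and lower bounds with additive error depending only on $\varepsilon_0$.
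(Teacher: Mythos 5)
The paper does not prove this statement; it is quoted verbatim as Minsky's Product Regions Theorem and the reader is referred to \cite{minsky1996extremal}. So there is no in-paper argument to compare against, and your proposal must be judged against Minsky's original proof.

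Your outline reproduces Minsky's strategy quite faithfully: a piecewise quasiconformal construction (collar-by-collar Teichm\"uller maps plus a thick-part map, glued with bounded interpolation dilatation) for the upper bound, and Kerckhoff's extremal-length formula together with a decomposition of $\mathrm{Ext}_X(\beta)$ across the thin collars for the lower bound. The formula you state as ``the main obstacle'' is indeed the technical heart of Minsky's paper, and in that sense you have correctly identified where all the work lives. Two cautions are worth flagging. First, as written, your weight $F(\theta_i^X, 1/\ell_i^X)$ depends only on the point in $\mathbb{H}$ and not on $\beta$; Minsky's estimate actually involves the \emph{relative} twisting of $\beta$ around $\alpha_i$ with respect to $X$ (roughly $i(\beta,\alpha_i)^2\bigl(\tfrac{1}{\ell_i} + \ell_i\,\mathrm{twist}_{\alpha_i}(\beta,X)^2\bigr)$), and his formula is a coarse \emph{maximum} over factors rather than a sum, though these are $\overset{*}{\asymp}$-equivalent with constants depending on $j$. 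Second, your phrase ``boundaries have comparable lengths of order $\ell_i^{1/2}$'' conflates flat length (where $\sqrt{\mathrm{Ext}}$ appears) with hyperbolic collar-boundary length (which is of definite size, not $\sqrt{\ell_i}$); the gluing argument requires the latter, namely that the collar boundary lies in the $\varepsilon_0$-thick part and therefore has bounded geometry. With those repairs the outline is sound, but the Gr\"otzsch-type extremal-length decomposition itself is stated rather than proved, and proving it uniformly in $\varepsilon_0$ and in the twist direction is exactly what occupies most of Minsky's paper; as submitted, the proposal is a correct roadmap rather than a complete proof.
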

We will call the connected components of ${S}_g \big\backslash \textbf{S}_{g\omega}$ the thick subsurfaces associated to $\omega^2$. Each subsurface $W \in {S}_g \big\backslash \textbf{S}_{g\omega}$ has a well-defined $\omega$-$\textit{diameter}$ (the smallest $W$ in its homotopy class with an $\omega$-geodesic boundary) which we denote by $W_{\omega}$.

\begin{theorem}[Rafi \cite{rafi2014thick}]
\label{theorem2.6}
For every essential closed curve $\gamma \in W$, 

\begin{displaymath}
\ell_X(\gamma) \overset{*}{\asymp} \frac{\ell_{\omega^2}(\gamma)}{W_{\omega}}.
\end{displaymath}
\end{theorem}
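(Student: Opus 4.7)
The plan is to bridge the flat and hyperbolic metrics on $W$ via extremal length, which is conformally invariant and hence computable in either of them. The two key inputs will be a two-sided flat estimate $\mathrm{Ext}_\omega(\gamma) \overset{*}{\asymp} \ell_{\omega^2}(\gamma)^{2}/W_\omega^{2}$ and a Maskit-Minsky-type hyperbolic estimate $\ell_X(\gamma)^{2} \overset{*}{\asymp} \mathrm{Ext}_X(\gamma)$; concatenating the two will deliver the claim.

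First I would rescale the Abelian differential to $\omega' := \omega/W_\omega$, so that $(W,|\omega'|)$ has diameter $1$. Because $W$ is by construction disjoint from the short-curve set $\textbf{S}_{g\omega}$, no essential curve in $W$ has small extremal length; in particular the $\omega'$-systole of $W$ is bounded below by a constant depending only on $g$ and $\varepsilon_0$, and its area is comparable to $1$. The upper bound $\mathrm{Ext}_\omega(\gamma) \leq \ell_{\omega^2}(\gamma)^{2}/\mathrm{Area}(W,\omega)$ is immediate from the analytic definition of extremal length evaluated on the flat conformal metric. For the matching lower bound I would construct a conformally embedded flat annular neighborhood of the $\omega$-geodesic representative of $\gamma$ whose modulus is comparable to $W_\omega/\ell_{\omega^2}(\gamma)$; the embedding is valid precisely because the systole bound rules out self-overlap.

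Next I would invoke the comparison on the hyperbolic side. On the thick part of $\mathcal{T}_g(S)$, which is where $W$ sits once one collapses the short curves in $\textbf{S}_{g\omega}$ using the product region Theorem \ref{theorem2.3}, hyperbolic length and $\sqrt{\mathrm{Ext}_X(\cdot)}$ differ only by a multiplicative constant depending on the topology. Substituting the flat estimate into this comparison and using the conformal invariance of $\mathrm{Ext}$ yields
\begin{displaymath}
\ell_X(\gamma) \overset{*}{\asymp} \sqrt{\mathrm{Ext}_X(\gamma)} = \sqrt{\mathrm{Ext}_\omega(\gamma)} \overset{*}{\asymp} \frac{\ell_{\omega^2}(\gamma)}{W_\omega},
\end{displaymath}
as claimed.

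The main obstacle is the flat lower bound $\mathrm{Ext}_\omega(\gamma) \gtrsim \ell_{\omega^2}(\gamma)^{2}/W_\omega^{2}$ and its uniformity over all essential $\gamma$. The clean route is to cut $W$ along its flat cylinder decomposition, or along the $(q,\tau)$-regular triangulation used later in Section \ref{Section3}, into pieces of bounded combinatorial complexity in the rescaled metric, build annular neighborhoods piece by piece, and sum moduli by a standard modulus-summation argument. Uniformity then comes from the compactness of the space of thick unit-diameter flat subsurfaces, which is where the defining thickness assumption on $W$ is really being used.
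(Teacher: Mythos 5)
This statement is quoted from Rafi's thick--thin decomposition paper and is not proved in the present paper, so there is no in-paper proof to compare against. Judged on its own merits, your sketch follows the right high-level strategy (bridge flat and hyperbolic length through the conformal invariant $\mathrm{Ext}$, which is close in spirit to Rafi's argument), but there are two concrete problems that leave a genuine gap.

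First, the inequality directions in the flat estimate are reversed. The analytic definition $\mathrm{Ext}_\omega(\gamma)=\sup_\rho \ell_\rho(\gamma)^2/\mathrm{Area}(\rho)$, evaluated on the flat conformal metric $|\omega|$, gives the \emph{lower} bound $\mathrm{Ext}_\omega(\gamma)\ge \ell_{\omega^2}(\gamma)^2/\mathrm{Area}(W,\omega)$, not the upper bound you claim. Dually, a conformally embedded annulus $A$ with core $\gamma$ gives the \emph{upper} bound $\mathrm{Ext}_\omega(\gamma)\le 1/\mathrm{Mod}(A)$, not a lower bound. Second, and more seriously, even after flipping the directions the annulus argument gives the wrong power: a flat collar of modulus $\asymp W_\omega/\ell_{\omega^2}(\gamma)$ yields $\mathrm{Ext}_\omega(\gamma)\lesssim \ell_{\omega^2}(\gamma)/W_\omega$, which is linear, whereas the two-sided estimate you need is the \emph{quadratic} $\mathrm{Ext}_\omega(\gamma)\asymp\big(\ell_{\omega^2}(\gamma)/W_\omega\big)^2$. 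The theorem is asserted for every essential $\gamma$ in $W$, including curves with $\ell_{\omega^2}(\gamma)/W_\omega\gg 1$, precisely the regime in which no uniformly thick flat collar embeds and the linear annulus bound is far from the truth. A genuine proof must produce the upper bound $\mathrm{Ext}_\omega(\gamma)\lesssim \big(\ell_{\omega^2}(\gamma)/W_\omega\big)^2$ by a different mechanism (Rafi does this by showing the rescaled flat metric $|\omega|/W_\omega$ on $W$ and the hyperbolic metric are uniformly comparable metrics, not just comparable on short curves). Likewise, the step $\ell_X(\gamma)^2\overset{*}{\asymp}\mathrm{Ext}_X(\gamma)$ is not what Maskit or Minsky's product-region theorem gives you: Maskit's inequality yields $\mathrm{Ext}_X(\gamma)\ge \ell_X(\gamma)/\pi$ and an exponential upper bound $\mathrm{Ext}_X(\gamma)\le \tfrac12\ell_X(\gamma)e^{\ell_X(\gamma)/2}$, which for long $\gamma$ is far weaker than the quadratic comparison you invoke. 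The quadratic statement is true for essential curves in thick subsurfaces, but it is itself a nontrivial estimate of the same character as the theorem being proved, so quoting it as a ``Maskit--Minsky-type'' black box risks circularity. You need to either cite the precise lemma of Rafi/Minsky that establishes $\mathrm{Ext}_X(\gamma)\lesssim\ell_X(\gamma)^2$ on thick subsurfaces, or supply the compactness argument directly.
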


\end{subsection}
Suppose $\alpha \in \textbf{S}_g$. Let $\beta_1,\beta_2$ be two curves intersecting transversally with $\alpha.$ There exists an annulus $\tilde{{S}}_{g\alpha}$ covering $\alpha$. Let $\tilde{\beta}_1, \tilde{\beta}_2$ be lifts of $\beta_1$ and $\beta_2$, respectively, to $\tilde{S}_{g\alpha}$. The lifts $\tilde{\beta}_1$ and $\tilde{\beta}_2$ are not unique, but there exist (in their respective free homotopy classes) lifts of $\beta_1$ and $\beta_2$ so that $\tilde{\beta}_1$ and $\tilde{\beta}_2$ connect the boundary curves of $\tilde{S}_{g\alpha}$. Define

\begin{displaymath}
\mathrm{twist}_{\alpha}\left(\beta_1,\beta_2\right) = i\left(\tilde{\beta}_1,\tilde{\beta}_2\right).
\end{displaymath}

\begin{prop}[Minsky,\cite{minsky1996extremal}]
Let $\alpha \in \textbf{S}_g$. Let $\beta_1$ and $\beta_2$ be two curves intersecting transversally with $\alpha$. Let $\tilde{\beta}_1$ and $\tilde{\beta}_2$ be lifts connecting the boundary curves of \ $\tilde{S}_{g\alpha}$, and let \ $\overline{\beta}_1$ and $\overline{\beta}_2$ be another pair of respective lifts connecting the boundary curves of \ $\tilde{S}_{g\alpha}$. Then

\begin{equation}
\label{equation2.3}
\bigl| i\left(\tilde{\beta}_1,\tilde{\beta}_2\right) - i(\overline{\beta}_1,\overline{\beta}_2)\bigr| < O(1).
\end{equation}
\end{prop}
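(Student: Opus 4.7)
The plan is to use the fact that $\beta_1$ and $\beta_2$ are simple closed curves, so any two distinct lifts of $\beta_i$ to the annular cover $\tilde{S}_{g\alpha}$ are disjoint; in particular $\tilde{\beta}_1$ and $\overline{\beta}_1$ are disjoint essential arcs, as are $\tilde{\beta}_2$ and $\overline{\beta}_2$. Throughout I would work with geodesic representatives in a chosen hyperbolic metric on $S_g$ pulled back to the cover, so that all pairs of arcs are automatically in minimal position and no bigons can appear.

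The first step is the purely topological observation that two disjoint essential arcs separate $\tilde{S}_{g\alpha}$ into two simply connected rectangular regions $R_+, R_-$, each bounded by one subarc of $\tilde{\beta}_1$, one subarc of $\overline{\beta}_1$, and two arcs lying on the two ends of $\tilde{S}_{g\alpha}$. Consequently, for any other essential arc $\gamma$ transverse to $\tilde{\beta}_1 \cup \overline{\beta}_1$, each transverse crossing corresponds to $\gamma$ switching between $R_+$ and $R_-$.

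The key step is to show that the crossings of $\gamma$ with $\tilde{\beta}_1 \cup \overline{\beta}_1$ alternate strictly between crossings on $\tilde{\beta}_1$ and crossings on $\overline{\beta}_1$. If two consecutive crossings were on the same arc (say $\tilde{\beta}_1$), then the subarc of $\gamma$ between them, together with the intervening subarc of $\tilde{\beta}_1$, would form a closed loop inside one of the rectangles $R_\pm$; by simple connectivity this loop would bound an embedded disk, producing a geodesic bigon between $\gamma$ and $\tilde{\beta}_1$ in the hyperbolic surface $\tilde{S}_{g\alpha}$, which is forbidden. Alternation immediately yields
\begin{displaymath}
\bigl| i(\gamma, \tilde{\beta}_1) - i(\gamma, \overline{\beta}_1) \bigr| \leq 1
\end{displaymath}
for $\gamma \in \{\tilde{\beta}_2, \overline{\beta}_2\}$. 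Applying this twice and invoking the triangle inequality gives
\begin{displaymath}
\bigl| i(\tilde{\beta}_1, \tilde{\beta}_2) - i(\overline{\beta}_1, \overline{\beta}_2) \bigr| \leq \bigl| i(\tilde{\beta}_1, \tilde{\beta}_2) - i(\overline{\beta}_1, \tilde{\beta}_2) \bigr| + \bigl| i(\overline{\beta}_1, \tilde{\beta}_2) - i(\overline{\beta}_1, \overline{\beta}_2) \bigr| \leq 2,
\end{displaymath}
which is the desired $O(1)$ bound.

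The main obstacle I anticipate is the technical setup in the non-compact annular cover: one needs to check that the geodesic lift of the geodesic representative of $\beta_i$ in $S_g$ gives a proper arc in $\tilde{S}_{g\alpha}$ connecting the two ends (the geodesic spirals as it approaches each end but limits to a well-defined ideal point), and that the transverse intersection count of such minimal-position representatives coincides with the intersection number $i(\cdot,\cdot)$ appearing in the statement. Once these conventions are pinned down, the alternation-plus-triangle-inequality argument above delivers the $O(1)$ bound with absolute constant $2$, independent of $\alpha$, $\beta_1$, $\beta_2$.
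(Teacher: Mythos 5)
The paper does not prove this proposition itself; it is quoted directly from Minsky \cite{minsky1996extremal} and used as a black box, so there is no internal proof against which to compare your argument. That said, your proof is correct and is essentially the standard one: since $\beta_1$ is simple, any two distinct lifts $\tilde\beta_1,\overline\beta_1$ to the annular cover are disjoint essential arcs and cut the compactified annulus into two closed disks; a transverse arc $\gamma$ in minimal position (e.g.\ a geodesic, which in $\mathbb{H}^2/\langle\alpha\rangle$ can bound no bigon with another geodesic, since their lifts to $\mathbb{H}^2$ meet at most once) must strictly alternate between hitting $\tilde\beta_1$ and $\overline\beta_1$, giving $|i(\gamma,\tilde\beta_1)-i(\gamma,\overline\beta_1)|\le 1$; applying this once in each variable and using the triangle inequality yields the explicit constant $2$. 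The technical point you flag at the end — that a lifted geodesic is a properly embedded arc meeting each boundary circle of the compactified annular cover in a single point — is indeed the only setup that needs to be pinned down, and it is standard; once that is granted the argument is complete. One small remark: when you say the loop between two consecutive crossings on the same arc "bounds an embedded disk, producing a geodesic bigon," you should really invoke an innermost-disk argument, since the disk you first exhibit may contain further strands of $\gamma$ or $\tilde\beta_1$; but this is routine and does not affect the conclusion.
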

More generally, and as in $\cite{eskin2019counting}$, for any two structures on $\textbf{S}_g$, one can define a notion of twisting around $\alpha \in \textbf{S}_g$. We say two structures on $\textbf{S}_g$ admit "twisting data around $\alpha$" if one can always choose free homotopy classes of lifts for these structures, connecting the boundaries of the annular cover $\tilde{S}_{g\alpha}$ so that the intersection numbers between lifts of two curves intersecting transversally with $\alpha$ are well-defined up to Equation $\ref{equation2.3}$. To quantify twisting data as one travels along a geodesic in an orbit closure, we associate cohomology classes to core curves in a cylinder decomposition, as we will see in the next section.

\section{Twisting information}
\label{Section3}

We will specify a family of local charts covering $V$ and use the main result of $\cite{mirzakhani2017boundary}$ to show this collection is finite. The finiteness of this family is crucial to obtaining universal bounds on relations between intersection numbers of saddle connections, which are necessary to push the strategy in $\cite{eskin2019counting}$ through. 
Assume, now, that $(X,\omega^2) \in V$ is horizontally periodic. Let $\left\{C_1,...,C_r\right\}$ denote the horizontal cylinders and $\left\{\alpha_1,...,\alpha_r\right\}$ the horizontal saddle connections. The following is a standard observation.

\begin{prop}
\label{propposition3.1}
Assume $\left\{\alpha_1,...,\alpha_{r_0}\right\}$ is a maximal independent family of horizontal saddle connections in $H_1(X,Z(\omega),\mathbb{Z})$. Then, one can extend $\left\{\alpha_1,...,\alpha_{r_0}\right\}$ to a basis 

\begin{displaymath}
B:= \left\{\alpha_1,...,\alpha_{r_0},\beta_1,...,\beta_k\right\}
\end{displaymath}
where each $\beta_i$ is a saddle connection joining the left endpoint of a horizontal saddle connection in the bottom of the cylinder $C_i$ to the left endpoint of a horizontal saddle connection in the top of the cylinder $C_i$.
\end{prop}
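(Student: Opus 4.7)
The plan is to build a CW decomposition of $X$ adapted to the cylinder decomposition and read off the statement from the resulting relative chain complex. For each cylinder $C_i$ choose, as specified, a straight segment $\beta_i$ joining the left endpoint of a bottom horizontal saddle connection to the left endpoint of a top one; both endpoints lie in $Z(\omega)$ and the segment avoids other singularities by staying in the open cylinder, so $\beta_i$ is a saddle connection. Cutting $C_i$ along $\beta_i$ turns the annulus into an open rectangle, so the cells $Z(\omega)$, the horizontal saddle connections together with the $\beta_i$, and the cut cylinders equip $X$ with a CW structure.

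Next I would analyze the cellular chain complex computing $H_*(X, Z(\omega); \mathbb{Z})$. Let $\mathcal{H}$ denote the free abelian group on horizontal saddle connections and $\mathcal{B}$ the free abelian group on $\beta_1, \dots, \beta_r$, so $C_1 = \mathcal{H} \oplus \mathcal{B}$ and $C_0 = 0$ after collapsing $Z(\omega)$. The key point is that after orienting each $\beta_i$ upward, its two copies along the boundary of the cut rectangle are traversed in opposite directions and cancel, leaving $\partial_2(C_i) = \mathrm{top}_i - \mathrm{bot}_i \in \mathcal{H}$. So $\mathrm{im}(\partial_2) \subset \mathcal{H}$, and an Euler characteristic count (equivalently, the single relation $\sum_i(\mathrm{top}_i - \mathrm{bot}_i) = 0$ coming from $X$ being closed) gives $\dim \mathrm{im}(\partial_2) = r - 1$.

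Since $\mathrm{im}(\partial_2)$ lies inside $\mathcal{H}$, the splitting $C_1 = \mathcal{H} \oplus \mathcal{B}$ descends to a splitting
\begin{displaymath}
H_1(X, Z(\omega); \mathbb{Z}) \;\cong\; \bigl(\mathcal{H} / \mathrm{im}(\partial_2)\bigr) \oplus \mathcal{B}.
\end{displaymath}
The first summand is precisely the subspace of $H_1$ spanned by horizontal saddle connections, so by maximality $\{\alpha_1, \dots, \alpha_{r_0}\}$ is a basis of it, while $\{\beta_1, \dots, \beta_r\}$ is a basis of $\mathcal{B}$ tautologically. Concatenating yields the desired basis, forcing $k = r$ and $r_0 + r = 2g + |Z(\omega)| - 1$. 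The only delicate point in the whole argument is the orientation bookkeeping that makes the two copies of each $\beta_i$ cancel along $\partial C_i$; once this is checked, the rest is formal.
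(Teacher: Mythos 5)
The paper states Proposition~\ref{propposition3.1} without proof, calling it ``a standard observation,'' so there is no in-paper argument to compare against. Your CW-theoretic argument is the natural one and is essentially correct: cutting each cylinder $C_i$ along a crossing arc $\beta_i$ gives a cell structure with $0$-cells $Z(\omega)$, $1$-cells the horizontal saddle connections together with the $\beta_i$, and $2$-cells the cut rectangles; the two copies of $\beta_i$ on $\partial C_i$ cancel (opposite orientations under the counterclockwise traversal), so $\partial_2 C_i = \mathrm{top}_i - \mathrm{bot}_i \in \mathcal{H}$, and $C_0(X,Z(\omega)) = 0$ gives $H_1(X,Z(\omega);\mathbb{Z}) = \bigl(\mathcal{H}/\mathrm{im}\,\partial_2\bigr)\oplus\mathcal{B}$.

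One step deserves more care than you give it: the passage from ``maximal $\mathbb{Z}$-independent family'' to ``$\mathbb{Z}$-basis of $\mathcal{H}/\mathrm{im}\,\partial_2$.'' In general a maximal independent subset of a generating set of a free abelian group need not be a basis ($\{2\}\subset\{2,3\}$ in $\mathbb{Z}$), so ``by maximality'' only gives a $\mathbb{Q}$-basis immediately, which is enough for period coordinates but not literally what the proposition asserts. The gap is closed by observing that $\partial_2$ is precisely the coboundary operator of the graph $G$ whose vertices are cylinders and whose edges are horizontal saddle connections (each $\gamma$ joining the cylinder below it to the cylinder above it): a subset $S$ of edges is independent in $C^1(G)/\mathrm{im}\,\delta$ iff $G\setminus S$ is connected, so maximality forces $G\setminus S$ to be a spanning tree and $S$ to be the set of chords, which is a genuine $\mathbb{Z}$-basis of $H^1(G;\mathbb{Z})\cong\mathcal{H}/\mathrm{im}\,\partial_2$. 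This also gives a cleaner justification of your Euler-characteristic count: $\mathrm{rank}\,\mathrm{im}\,\partial_2 = r-1$ because $\ker\partial_2 \cong H_2(X,Z(\omega);\mathbb{Z})\cong H_2(X;\mathbb{Z})\cong\mathbb{Z}$, or equivalently because $G$ is connected. With that caveat filled in, your proof is complete and, I would say, the right way to see the statement.
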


\begin{lemma}[Nguyen,\cite{nguyen2012volumes}]
\label{lemma3.2}
Let $(X,\omega^2) \in V$ be horizontally periodic, let $\left\{C_1,...,C_r\right\}$ denote the horizontal cylinders, $\left\{\alpha_1,...,\alpha_r\right\}$ the horizontal saddle connections, and $\left\{\alpha_1,...,\alpha_{r_0}\right\}$ a maximal homologically indepedent set of horizontal saddle connections. Denote by $\Gamma$ the corresponding separatrix diagram. Assume $\left\{\alpha_1,...,\alpha_{r_0}\right\}$ is completed to a basis $B:=\left\{\alpha_1,...,\alpha_{r_0},\beta_1,...,\beta_k\right\}$ as in Proposition $\ref{propposition3.1}$. Define

\begin{displaymath}
V_{\Gamma}:= \left\{\left(l_1,...,l_{r_0},v_1,...,v_k\right) \in \left(\mathbb{R}_{>0}\right)^{r_0} \times \mathbb{C}^k\right\}.
\end{displaymath}
Let \ $\Phi: V_{\Gamma} \rightarrow \Omega\mathcal{T}_g(S)\left(p_1,...,p_k\right)$ be the map defined as follows: for \ $\left(l_1,...,l_{r_0},v_1,...,v_k\right)$, construct the cylinder $C_j$ from the parallelogram in \ $\mathbb{R}^2$ determined by the assignment of the length $l_i$ to the horizontal saddle connection $\alpha_i$ and the complex holonomy vector $v_i$ to each $\beta_i$. Glue the cylinders together according to the cylinder diagram $\Gamma$. Then, $\Phi$ is locally injective, and $\Phi^{-1}\left(V\right)$ is comprised of the intersection of \ $V_{\Gamma}$ with a family of real linear subspaces of \ $\mathbb{R}^{r_0} \times \mathbb{C}^k$ of dimension $\leq 2h-1$. 

\end{lemma}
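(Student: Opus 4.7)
The plan is to identify the data $(l_1,\ldots,l_{r_0},v_1,\ldots,v_k)$ parametrizing $V_\Gamma$ with a restriction of period coordinates read off using the basis $B$ of Proposition \ref{propposition3.1}, and then invoke the Eskin-Mirzakhani-Mohammadi structure theorem to extract the linear equations.

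For local injectivity of $\Phi$, I would first argue that by construction, $\Phi(l_1,\ldots,l_{r_0},v_1,\ldots,v_k)$ has $\omega$-holonomy $l_i \in \mathbb{R}_{>0}$ along each $\alpha_i$ (the widths of the cylinders being read off as the appropriate sums of $l_i$ encoded by the diagram $\Gamma$, and the remaining horizontal periods $l_{r_0+1},\ldots,l_r$ being determined by the integer linear combinations expressing $\alpha_{r_0+1},\ldots,\alpha_r$ in terms of $\alpha_1,\ldots,\alpha_{r_0}$ in $H_1(X,Z(\omega);\mathbb{Z})$) and complex holonomy $v_j \in \mathbb{C}$ along each $\beta_j$ (imaginary part the height of $C_j$, real part its twist). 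Since $B$ is a basis of $H_1(X,Z(\omega);\mathbb{Z})$ by Proposition \ref{propposition3.1}, the input data determines the entire period class $[\omega] \in H^1(X,Z(\omega);\mathbb{C})$, and local injectivity follows from the fact that the period map $\phi_{T,\omega}$ of Section \ref{Section2} is a local analytic embedding of the stratum into $\mathbb{C}^{r_0+k}$.

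Next, under the basis $B$ identify $H^1(X,Z(\omega);\mathbb{C})$ with $\mathbb{C}^{r_0+k}$; in these coordinates $V_\Gamma$ sits as the open subset consisting of vectors whose first $r_0$ entries are real and positive, and $\Phi$ agrees near $(X,\omega^2)$ with $\phi_{T,\omega}^{-1}$ restricted to $V_\Gamma$. By the Eskin-Mirzakhani-Mohammadi theorem, $V$ is locally cut out in period coordinates by a finite family of real linear equations; pulling these back along $\Phi$ presents $\Phi^{-1}(V)$ as the intersection of $V_\Gamma$ with a real linear subspace $L \subset \mathbb{R}^{r_0}\times\mathbb{C}^k$. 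The bound $\dim_\mathbb{R} L \leq 2h-1$ follows from the dimension of $V$ itself, combined with the reality constraint $l_i \in \mathbb{R}$: since $(X,\omega^2)$ is horizontally periodic, $r_0 \geq 1$, and this constraint consumes at least one real degree of freedom in the defining equations of $V$ beyond the natural complex dimension count.

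The main obstacle is the bookkeeping matching the cylinder-by-cylinder construction of $\Phi$ to the holonomies along the basis $B$. One must verify that the parallelogram for $C_j$, with bottom edge the prescribed combination of the $\alpha_i$'s and left edge encoded by $v_j$, yields an Abelian differential whose $\alpha_i$- and $\beta_j$-periods are precisely the input data. Given the explicit choice of $\beta_j$ in Proposition \ref{propposition3.1} (from the left endpoint of a bottom horizontal saddle connection to the left endpoint of a top horizontal saddle connection of $C_j$), this is a direct but slightly delicate check of the gluing conventions. Once the identification of $\Phi$ with a restriction of the period map is in hand, both local injectivity and the linear description of $\Phi^{-1}(V)$ are immediate consequences of the Eskin-Mirzakhani-Mohammadi theorem read in the basis $B$.
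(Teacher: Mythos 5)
The paper does not prove this lemma; it cites Nguyen \cite{nguyen2012volumes} and uses it as a black box, so there is no internal argument to compare against. Your proposal takes the natural route and is essentially correct: $\Phi$ agrees locally with the restriction of (the inverse of) the period map $\phi_{T,\omega}$ in the basis $B = \{\alpha_1,\ldots,\alpha_{r_0},\beta_1,\ldots,\beta_k\}$, so local injectivity follows from the period map being a local analytic embedding, and the linear description of $\Phi^{-1}(V)$ is the pullback of the Eskin--Mirzakhani--Mohammadi linear equations.

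The one place that is stated too loosely is the dimension bound $\dim_{\mathbb R}L \le 2h-1$. Saying the reality constraint ``consumes at least one real degree of freedom in the defining equations of $V$'' confuses two independent things: the linear subspace $S = T_{(X,\omega^2)}V$ cutting out $V$ in period coordinates, and the coordinate subspace $\mathbb R^{r_0}\times\mathbb C^k$ in which $V_\Gamma$ lives. What you actually need is that $S$ is not contained in $\mathbb R^{r_0}\times\mathbb C^k$, so that the intersection $S\cap(\mathbb R^{r_0}\times\mathbb C^k)$ is a proper subspace of $S$ and therefore has real dimension $\le \dim_{\mathbb R}S - 1 = 2h-1$. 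The clean witness is $i\omega$: since $S = S^{\mathbb R}\oplus iS^{\mathbb R}$ is invariant under multiplication by $i$, we have $i\omega\in S$, but its $\alpha_1$-period equals $i\,l_1$ with $l_1>0$, which is not real; hence $i\omega\notin\mathbb R^{r_0}\times\mathbb C^k$ and the proper containment is strict. With this one sentence made explicit the argument is complete.
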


In particular, if $S:= T_{X}V = S^{\mathbb{R}} \oplus iS^{\mathbb{R}}$, with $S^{\mathbb{R}} = S \cap H^1(X,Z(\omega),\mathbb{R})$, then the family of real linear subspaces in Lemma $\ref{lemma3.2}$ lies in $H^1(X,Z(\omega),\mathbb{C}) = H^1(X,Z(\omega),\mathbb{R} \oplus i\mathbb{R})$. If we denote each of these vector spaces by $V_{\alpha}$, then we have that 
\begin{displaymath}
\Phi(V_{\Gamma,\alpha}) :=  \Phi(V_{\Gamma} \cap V_{\alpha})
\end{displaymath}
covers $V$. A priori, there is no reason for this collection to be finite. In $\cite{nguyen2012volumes}$, finiteness is derived using algebraicity of $V$, based on analogies with Teichm{\"u}ller curves. Finiteness may also be obtained by appealing to the Cylinder Finiteness Theorem of $\cite{mirzakhani2017boundary}$. We will require a slightly weaker result.

\begin{definition}
Let $(X,\omega^2)$ be a completely periodic surface, and assume $\left\{C_1,...,C_r\right\}$ is a cylinder decomposition with core curves $\left\{\alpha_1,...,\alpha_r\right\}$. For each $\alpha_i$, define $I_{\alpha_i} \in H^1(X,Z(\omega),\mathbb{Z})$ to be the cohomology class which is zero on all relative homology classes homologous to a cycle disjoint from the interior of \ $C_i$, and equal to $1$ on any cycle connecting a zero of $\omega$ on the bottom of the cylinder $C_i$ to a zero on the top of the cylinder.
\end{definition}

\begin{theorem}[Mirzakhani-Wright,$\cite{mirzakhani2017boundary}$]
\label{theorem3.4}
Suppose $V \subset \Omega\mathcal{T}_g(S)(p_1,...,p_k)$ is locally cut out by real linear equations in period coordinates. Then, there exist two finite sets $S_1, S_2 \subset \mathbb{R}$, such that if \ $(X,\omega^2) \in V$, and $\mathcal{C} = \left\{C_1,...,C_r\right\}$ is a cylinder decomposition of $(X,\omega^2)$, then the ratio of circumferences of any two cylinders in $\mathcal{C}$ is in $S_1$. Furthermore, if $C_i$ has circumference $c_i$ and core curve $\alpha_i$, then the twist space for $\mathcal{C}$ at $(X,\omega^2)$ is a subspace of

\begin{displaymath}
\left\{\sum t_ic_iI_{\alpha_i}\right\}
\end{displaymath}
defined by linear equations on the $t_i$ with coefficients in $S_2$ that are satisfied when all $t_i = m_i$ are the moduli of cylinders $C_i$. In particular, the relations between the heights $h_i$ of the $C_i$ are restricted to a finite set coming from $S_2$.
\end{theorem}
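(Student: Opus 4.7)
The plan is to combine Wright's Cylinder Deformation Theorem with the boundary-finiteness theory developed in Mirzakhani--Wright. I would start by setting up coordinates: fix $(X,\omega^2)\in V$ with cylinder decomposition $\mathcal{C}=\{C_1,\dots,C_r\}$, core curves $\alpha_i$, circumferences $c_i$, and moduli $m_i$. The relative cohomology classes $I_{\alpha_i}$ span the ambient twist space of $\mathcal{C}$ in the stratum, and the twist space of $\mathcal{C}$ inside $V$ is
\begin{displaymath}
\mathrm{Twist}(\mathcal{C},V) \;=\; T_{(X,\omega^2)}V \;\cap\; \mathrm{span}_{\mathbb{R}}\{I_{\alpha_1},\dots,I_{\alpha_r}\}.
\end{displaymath}
In the coordinates $t_i$ of the parametrization $\sum t_i c_i I_{\alpha_i}$, the point $(X,\omega^2)$ itself corresponds to $t_i = m_i$, because in a basis as in Proposition \ref{propposition3.1} the vertical components of the $\beta_i$-periods are precisely the heights $h_i = m_i c_i$. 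This automatically yields the last sentence of the theorem once the coefficient set is shown to be finite.

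Next I would invoke Wright's Cylinder Deformation Theorem to partition $\{C_1,\dots,C_r\}$ into $V$-parallel equivalence classes. For each such class $\mathcal{E}$, the infinitesimal deformation $\sum_{C_i\in\mathcal{E}} c_i I_{\alpha_i}$ lies in $\mathrm{Twist}(\mathcal{C},V)$, and within a class the circumferences are proportional by fixed real constants. This already forces $\mathrm{Twist}(\mathcal{C},V)$ to sit inside $\{\sum t_ic_iI_{\alpha_i}\}$ as a subspace respecting the $\mathcal{E}$-block structure, and it yields a first description of the allowed ratios $c_i/c_j$ and of the coefficients in the defining equations on the $t_i$ in terms of the linear equations cutting out $V$.

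To promote this local picture to the global finiteness of $S_1$ and $S_2$, I would appeal to the boundary analysis of Mirzakhani--Wright \cite{mirzakhani2017boundary}. The strategy is to let the moduli in one $V$-parallel class degenerate (sending $m_i \to \infty$ or $0$) and record the resulting data in the boundary stratum $\partial V$ obtained by collapsing the class. The combinatorial data of the cylinder decomposition --- cylinder diagram, equivalence classes, circumference ratios, and the coefficients of the linear equations on the $t_i$ --- is discrete and is exactly what the corresponding boundary stratum sees. Since the main result of \cite{mirzakhani2017boundary} produces only finitely many boundary strata of $V$, only finitely many ratios and only finitely many coefficient tuples can arise. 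Taking the (finite) union of all such ratios as $(X,\omega^2)$ and $\mathcal{C}$ vary yields $S_1$, and similarly the coefficients of the defining equations on the $t_i$ yield $S_2$. Applying $t_i=m_i$ gives the finite system satisfied by the heights.

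The hardest step will be the passage from the local statement at a single point to global finiteness across all $(X,\omega^2) \in V$ and all cylinder decompositions $\mathcal{C}$. The Cylinder Deformation Theorem by itself only describes $\mathrm{Twist}(\mathcal{C},V)$ pointwise, and the coefficients of the defining equations could, a priori, vary continuously along $V$. Eliminating this dependence requires Mirzakhani--Wright's compactification and the algebro-geometric input (via quasi-projectivity of affine invariant submanifolds) which guarantees that the degeneration data takes only finitely many combinatorial forms. This is the nontrivial external ingredient that ultimately forces $S_1$ and $S_2$ to be finite.
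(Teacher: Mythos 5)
The paper does not prove Theorem~\ref{theorem3.4}; it is stated as a black box and cited directly from Mirzakhani--Wright \cite{mirzakhani2017boundary} (their Cylinder Finiteness Theorem), and the text immediately moves on to Corollary~\ref{corollary3.4.1} which uses it. So there is no in-paper proof for your proposal to be measured against.

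As a reconstruction of the Mirzakhani--Wright argument your sketch is a reasonable high-level outline: Wright's Cylinder Deformation Theorem does give the $V$-parallel equivalence classes, fixes the circumference ratios within each class, and exhibits the standard cylinder deformation $\sum_{C_i \in \mathcal{E}} c_i I_{\alpha_i}$ as an element of the tangent space; and the global finiteness does come from a degeneration argument in their boundary theory. Two cautions. First, the degeneration you want is the one in which a $V$-parallel class is stretched (modulus $m_i \to \infty$, with the flat structure rescaled so the circumferences go to $0$), not a two-sided $m_i \to 0$ or $\infty$ alternative; the WYSIWYG limit only behaves well in one direction, and this is where the finite list of boundary affine invariant submanifolds enters. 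Second, you attribute the finiteness ultimately to Filip's quasi-projectivity of affine invariant submanifolds. In \cite{mirzakhani2017boundary} the finiteness of boundary components and of the resulting combinatorial data is obtained from their own compactness and ``degeneration to the boundary'' machinery (together with Eskin--Mirzakhani--Mohammadi's structure and equidistribution results), not by invoking algebraicity; invoking Filip is neither necessary nor what they do. If you replace that step with the Mirzakhani--Wright boundary finiteness directly, the outline matches the cited proof. Either way, this is an external theorem and the paper correctly treats it as such.
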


\begin{corollary}
\label{corollary3.4.1}
The set $\left\{\Phi(V_{\Gamma,\alpha})\right\}$ covering $V$ may be chosen so that if 
\begin{displaymath}
V_{\Gamma,\alpha} = (l_1,...,l_{r_0},v_1,...,v_k),
\end{displaymath}
then there exists a finite set of possible linear relations among the holonomy vectors $v_i$.
\end{corollary}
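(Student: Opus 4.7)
The approach is to combine finiteness of combinatorial cylinder/separatrix diagrams with the quantitative control provided by Theorem \ref{theorem3.4}. In a fixed stratum, the number of separatrix diagrams $\Gamma$ that can arise as a horizontal cylinder decomposition is finite, since each is determined by a finite combinatorial datum (number of cylinders, number of horizontal saddle connections on top and bottom of each, and the gluing pattern) whose size is bounded in terms of the genus and the number of zeros of $\omega$. Thus it suffices to show that, for each fixed $\Gamma$, only finitely many candidate sets of linear relations among the $v_i$ can occur as $V_{\Gamma,\alpha}$ ranges over the subcharts covering $V \cap \Phi(V_\Gamma)$.

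Fix $\Gamma$. In the coordinates $(l_1, \ldots, l_{r_0}, v_1, \ldots, v_k)$ of Lemma \ref{lemma3.2}, the circumference $c_j$ of the cylinder $C_j$ is a fixed linear combination of the $l_i$'s (determined by $\Gamma$), the imaginary part $\mathrm{Im}(v_i)$ is the height $h_i$ of $C_i$, and, after fixing a convention for the integer twist choice of $\beta_i$ in Proposition \ref{propposition3.1}, the real part $\mathrm{Re}(v_i)$ equals the twist $t_i c_i$. Theorem \ref{theorem3.4} furnishes finite sets $S_1, S_2 \subset \mathbb{R}$: all ratios $c_j/c_{j'}$ lie in $S_1$, and the twist space at $(X, \omega^2)$ is cut out by linear equations $\sum a_i t_i = 0$ with coefficients $a_i \in S_2$. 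After multiplying through by $c_1$, such a relation becomes $\sum a_i (c_1/c_i)\, \mathrm{Re}(v_i) = 0$, whose coefficients lie in the finite set $\{ab : a \in S_2,\ b \in S_1\}$. The same bookkeeping converts each relation on the $t_i$ into a relation among the heights $h_i = t_i c_i$, producing relations among the $\mathrm{Im}(v_i)$ with coefficients in the same finite set.

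Any real linear subspace of $\mathbb{R}^{r_0} \times \mathbb{C}^k$ of dimension at most $2h - 1$ cut out by equations whose coefficients lie in a fixed finite subset of $\mathbb{R}$ belongs to a finite family; hence for each fixed $\Gamma$ only finitely many subspaces $V_\alpha$ can occur as $V_{\Gamma,\alpha}$ ranges over the cover, and the union over the finitely many admissible $\Gamma$ yields the desired finite list of candidate relations among the $v_i$. The main subtlety to watch is aligning the twist parameters $t_i$ of Theorem \ref{theorem3.4} with the coordinate $\mathrm{Re}(v_i)$ of Lemma \ref{lemma3.2}: this identification depends on the choice of representative $\beta_i$ in Proposition \ref{propposition3.1} and is only defined modulo $c_i \mathbb{Z}$, but since only the homogeneous linear relations among the $v_i$ matter, this bounded ambiguity neither introduces new relations nor kills existing ones.
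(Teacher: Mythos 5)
Your proposal is correct and follows essentially the same route as the paper's proof: finitely many separatrix diagrams per stratum, and then the Mirzakhani-Wright Cylinder Finiteness Theorem (Theorem \ref{theorem3.4}) to control relations among circumferences, heights, and twists for each fixed $\Gamma$. Your version is noticeably more careful than the paper's one-line invocation of Theorem \ref{theorem3.4}: you explicitly decompose $v_i$ into height ($\mathrm{Im}\,v_i$) and twist ($\mathrm{Re}\,v_i$), track the coefficient sets $S_1$, $S_2$ through the change of variables, and flag the $c_i\mathbb{Z}$ ambiguity in identifying $\mathrm{Re}(v_i)$ with $t_i c_i$ — none of which the paper spells out, but all of which is implicit in its appeal to the theorem.
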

\begin{proof}
First, note that the number of cylinder diagrams $\Gamma$ in a stratum $\Omega\mathcal{T}_g(S)\left(p_1,...,p_k\right)$ is finite. It thus suffices to prove that there can only exist a finite set of linear relations among the $v_i$ for a fixed separatrix diagram $\Gamma$. Note that a vector $v_i$ coming from a crossing saddle connection is determined by the circumference and height of the corresponding horizontal cylinder $C_i$. However, by Theorem $\ref{theorem3.4}$, there can only exist a finite set of relations between the circumferences and heights. 
\end{proof}

\section{Bounds on intersection numbers}
\label{Section4}

In this section, we use the dynamical properties of rank-1 $\mathrm{SL}\left(2,\mathbb{R}\right)$-orbit closures deduced in Section $\ref{Section3}$, to achieve relations between intersection numbers stricter than those obtained in $\cite{eskin2019counting}$, owing to the restrictions on relations between periods given by holonomy vectors. 

\begin{subsection}{Preliminaries}

Let $\alpha \in \textbf{S}_g$. If $\alpha$ is an arc in $(X,\omega^2)$, then either has a unique geodesic representative, or its geodesic representatives foliate a flat cylinder $C_{\alpha}$, bounded by saddle connections.

\begin{definition}[Eskin-Mirzakhani-Rafi,\cite{eskin2019counting}]
A curve is $\textit{short}$ in $\omega^2$ if \ $\mathrm{Ext}_X(\alpha) \leq \varepsilon_0$, for the \ $\varepsilon_0$ appearing in the statement of \ Theorem \ $\ref{theorem2.3}$. Denote the set of short curves in $\omega^2$ by ${\mathbf{S}_g}_{\omega}$.  If the interior of $C_{\alpha}$ is non-empty, we say $\alpha$ is a cylinder curve. Let $\tau > 0$. A curve \ ${\mathbf{S}_g}_{\omega}$  is a $\textit{large cylinder curve}$ if \ $\mathrm{Mod}(C_{\alpha}) \geq e^{-2\tau}$. Denote the set of large-cylinder curves by ${\textbf{S}}^{\geq \tau}_{g\omega}$ and define

\begin{displaymath}
{\textbf{S}}^{\leq \tau}_{g\omega} = \textbf{S}_{g\omega} \big\backslash {\textbf{S}}^{\geq \tau}_{g\omega}.
\end{displaymath}

\end{definition}

\begin{definition}[Eskin-Mirzakhani-Rafi,\cite{eskin2019counting}]
Let $(X,\omega^2)$ be an Abelian differential. For $\alpha$ a cylinder curve and $C_{\alpha}$ its cylinder of geodesic representatives, let $v_{\alpha}$ be an arc connecting the boundary curves of $C_{\alpha}$, so that $v_{\alpha}$ is perpendicular to $\alpha$. An $(\omega,\tau)$-regular triangulation $T$ of $\omega^2$ is a collection of disjoint saddle connections satisfying the following conditions:

\begin{enumerate}
    \item For all $\alpha \in \mathcal{S}^{\geq \tau}_q$, $T$ is disjoint from the interiors $\mathring{C}_{\alpha}$ of $C_{\alpha}$ and triangulates their complement:
    \begin{displaymath}
    \omega^2 \ \big\backslash \bigcup_{\alpha \in S^{\geq \tau}_q} \mathring{C}_{\alpha}.
    \end{displaymath}
    \item Any edge $w$ of $T$ that intersects a thick subsurface $W$ of $\omega^2$ satisfies $l_{\omega^2} \overset{*}{\prec} W$.
    \item For $\alpha \in {\textbf{S}}^{\leq\tau}_{g\omega}$ a cylinder curve, $v_{\alpha}$ intersects $T$ a uniformly bounded number of times.
\end{enumerate}

\end{definition}

Using Theorem $\ref{theorem2.6}$, it is proven in $\cite{eskin2019counting}$ that collections of disjoint saddle connections can always be extended to these types of triangulations.

\begin{lemma}[Eskin-Mirzakhani-Rafi,\cite{eskin2019counting}]
\label{lemma4.3}
For every $\tau$, there is  $\varepsilon_1(\tau)$ so that for $\varepsilon < \varepsilon_1(\tau)$, any subset $Q \subset Q_{\omega}(\varepsilon)$ consisting of disjoint saddle connections can be extended to a $(\omega,\tau)$-regular triangulation.
\end{lemma}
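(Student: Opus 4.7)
The plan is to construct $T$ in three stages that correspond one-to-one with the three conditions defining $(\omega,\tau)$-regularity, choosing $\varepsilon_1(\tau)$ small enough at the end to make the stages compatible. The starting observation is that, provided $\varepsilon_1(\tau)$ is chosen sufficiently small, no saddle connection in $Q$ can cross the interior of a large cylinder $C_\alpha$: a transverse crossing would contribute a segment whose length is at least the height of $C_\alpha$, forcing either the saddle connection itself or the short curve it represents to have extremal length bounded below in terms of $\tau$, contradicting $Q \subset Q_{\omega}(\varepsilon)$. Hence removing the interiors of all large cylinders $C_\alpha$ for $\alpha \in \textbf{S}^{\geq\tau}_{g\omega}$ produces a bordered flat surface $X'$ containing $Q$ whose boundary is a disjoint union of saddle connections. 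Condition (1) will then hold automatically for any saddle-connection triangulation of $X'$ extending $Q$ together with these boundary edges.

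To achieve condition (2), I would triangulate $X'$ one thick subsurface at a time. For a thick subsurface $W$, rescale the flat metric by $1/W_{\omega}$; by Rafi's comparison (Theorem \ref{theorem2.6}), essential closed curves on $W$ then have hyperbolic length comparable to flat length, so $W$ in the rescaled metric has uniformly bounded geometry. A standard Delaunay-type argument in this rescaled metric produces a triangulation by saddle connections of uniformly bounded rescaled length, i.e.\ of $\omega$-length $\overset{*}{\prec} W_\omega$. Choosing $\varepsilon_1(\tau)$ small enough forces any element of $Q$ crossing $W$ to itself satisfy this bound, so it can be prescribed as an edge of the triangulation of $W$.

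For condition (3), the thin cylinders are handled separately. If $\alpha \in \textbf{S}^{\leq\tau}_{g\omega}$ then $\mathrm{Mod}(C_\alpha) < e^{-2\tau}$, so the perpendicular arc $v_\alpha$ has flat length at most $e^{-2\tau}\,\ell_{\omega}(\alpha)$; the cylinder $C_\alpha$ is very short transverse to its core. I would triangulate the closure of $C_\alpha$ by diagonals each crossing $v_\alpha$ at most once, with the total number of diagonals controlled by the number of zeros of $\omega$ on the boundary of $C_\alpha$, which is bounded in terms of the stratum. This yields a uniform bound on $i(v_\alpha, T)$ depending only on $\tau$ and the stratum.

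The principal obstacle I anticipate is compatibility across the three stages: the diagonal triangulations of thin cylinders must glue to the thick-subsurface triangulations along shared boundary curves without violating condition (2) or disturbing edges of $Q$. My plan for this is to fix all boundary data first --- namely the edges of $Q$, the boundary saddle connections of the removed large cylinders, and the prescribed diagonals of the thin cylinders --- and only then run the relative Delaunay argument inside each thick $W$, treating the prescribed data as forced edges. Taking $\varepsilon_1(\tau)$ smaller than both the $\varepsilon_0$ of Theorem \ref{theorem2.3} and the threshold at which Rafi's comparison becomes uniformly effective in every thick subsurface is what allows the three stages to coexist.
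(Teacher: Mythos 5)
The paper does not give a proof of this lemma; it is imported verbatim from Eskin--Mirzakhani--Rafi and labeled as such, with only the remark that Theorem~\ref{theorem2.6} is the essential input. So there is no in-paper argument to compare against, and I am evaluating your sketch on its own merits against the EMR construction.

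Your overall architecture --- excise large cylinders, Delaunay-triangulate the thick pieces after rescaling by $W_\omega$, and close up the thin cylinders by a bounded number of diagonals --- matches the EMR strategy, and the invocation of Rafi's thick--thin comparison is exactly the right tool for condition~(2). However, the justification for your first step is not sound as written. You argue that a transverse crossing of a large cylinder $C_\alpha$ forces a flat-length lower bound (the height of $C_\alpha$) and that this in turn bounds $\mathrm{Ext}_\omega(\gamma)$ from below; but a long flat length in one conformal metric gives no lower bound on extremal length, which is a supremum over all conformal metrics. The correct mechanism is conformal: the family of arcs crossing a flat cylinder of modulus $m$ has extremal length at least $m$ (the series rule / comparison principle), so a saddle connection entering and leaving $\mathring{C}_\alpha$ with $\mathrm{Mod}(C_\alpha)\geq e^{-2\tau}$ satisfies $\mathrm{Ext}_\omega(\gamma)\geq e^{-2\tau}$, and taking $\varepsilon_1(\tau)<e^{-2\tau}$ forces the exclusion. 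A similar gap appears for condition~(3): you triangulate each thin cylinder by diagonals with at most one crossing of $v_\alpha$, but you do not address the possibility that an edge of $Q$ itself crosses $C_\alpha$ with large twist around $\alpha$, which would already violate~(3) before you add any diagonals. This too is ruled out by an extremal-length (not flat-length) estimate: for $\varepsilon$ small enough, elements of $Q_\omega(\varepsilon)$ have uniformly bounded twisting around every short curve. Repair both steps by replacing flat-length lower bounds with modulus comparisons and the sketch becomes a faithful account of the EMR argument.
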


The following will also be useful.

\begin{lemma}[Eskin-Mirzakhani-Rafi,\cite{eskin2019counting}]
\label{lemma4.4}
Let $T$ be a $(\omega,\tau)$-regular triangulation and let $w_T$ be an edge of $T$. Let $s$ be the minimum of \ $W_{\omega}$ where $W$ is a thick subsurface of $\omega^2$ that intersects $w_T$. Let $w$ be any other saddle connection in $q$ satisfying $\mathrm{twist}(w,q) = O(1)$ for every curve $\alpha \in S_q^{\leq\tau}$. Then,

\begin{displaymath}
i(w_T,w) \overset{*}{\prec} \frac{\ell_{\omega}(w)}{s} + 1.
\end{displaymath}
\end{lemma}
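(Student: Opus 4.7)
The plan is to decompose the saddle connection $w$ by how it interacts with the thick--thin decomposition of $\omega^2$, count intersections with $w_T$ in each piece, and sum the contributions. The three inputs I will use are: Condition (2) of $(\omega,\tau)$-regularity, which says $\ell_\omega(w_T) \overset{*}{\prec} W_\omega$ as soon as $w_T$ meets the thick subsurface $W$; Condition (3), which bounds the number of arcs of $T$ crossing any small cylinder $C_\alpha$ with $\alpha \in \mathbf{S}^{\leq \tau}_{g\omega}$; and the hypothesis $\mathrm{twist}_\alpha(w,q) = O(1)$ for every such $\alpha$, which prevents $w$ from winding many times inside a small cylinder. Condition (1) will kill the contribution from large cylinders for free.

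For the thick contribution, fix a thick subsurface $W$ with $w_T \cap W \neq \emptyset$. Theorem \ref{theorem2.6} calibrates the $\omega$-diameter $W_\omega$: I can cover $W$ by a uniformly bounded number of flat disks of diameter $\asymp W_\omega$. By Condition (2), $w_T \cap W$ is a straight segment of length $\overset{*}{\prec} W_\omega$, so it sits in $O(1)$ disks. I then cut $w \cap W$ into sub-arcs of $\omega$-length $\asymp W_\omega$, producing at most $\ell_\omega(w \cap W)/W_\omega + 1$ pieces; each piece, being a straight segment in a flat disk of comparable diameter, meets the straight segment $w_T \cap W$ at most $O(1)$ times. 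Summing over the $O(1)$ thick subsurfaces $w_T$ meets and using $W_\omega \geq s$, I obtain
\[
\sum_{W} i(w_T \cap W,\, w \cap W) \;\overset{*}{\prec}\; \frac{\ell_\omega(w)}{s} + 1.
\]

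For the small cylinder contribution, fix $\alpha \in \mathbf{S}^{\leq\tau}_{g\omega}$. By Condition (3), $w_T$ enters $C_\alpha$ along at most a bounded number of arcs, and each such arc has twist $O(1)$ around $\alpha$ (bounded intersection with $v_\alpha$ forces the arc to be nearly parallel to $v_\alpha$). Combined with the hypothesis $\mathrm{twist}_\alpha(w,q) = O(1)$, the standard identity expressing the intersection number inside an annulus in terms of relative twisting plus boundary crossings yields $O(1)$ intersections per arc of $w_T$ per cylinder. Since $|\mathbf{S}^{\leq\tau}_{g\omega}| \leq 3g-3$, the total cylinder contribution is $O(1)$. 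Condition (1) means $w_T$ does not enter any large cylinder $C_\beta$, so large cylinders contribute nothing. Adding the two bounds gives $i(w_T, w) \overset{*}{\prec} \ell_\omega(w)/s + 1$.

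The main obstacle is the per-thick-subsurface estimate: one must produce a cover of $W$ by flat disks of bounded multiplicity in which both $w_T \cap W$ and sub-arcs of $w \cap W$ sit as essentially straight, diameter-sized segments. Theorem \ref{theorem2.6} is precisely what calibrates the scale $W_\omega$ so that this picture is consistent, and Condition (2) is what forces $w_T$ itself to live at that scale; without either, the linear bound in $\ell_\omega(w)/s$ fails. The cylinder estimate is then comparatively routine, using Conditions (1) and (3) on the triangulation side and the bounded twist of $w$ as the only hypothesis on $w$.
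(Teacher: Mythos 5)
This lemma is quoted in the paper from Eskin--Mirzakhani--Rafi without a proof, so there is no in-paper argument to compare against; I can only assess your proposal on its own terms.

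Your thick--thin decomposition is the right framework, and the thick-part estimate is essentially sound: cutting $w \cap W$ into pieces of length $\asymp W_\omega$ and using Condition (2) to put $w_T \cap W$ at the same scale does give $\ell_\omega(w)/W_\omega + O(1)$ intersections per thick piece, and $W_\omega \geq s$ finishes that part. (You are glossing over why two segments of length $\prec W_\omega$ in $W$ meet only $O(1)$ times --- this uses that essential curves in $W$ have extremal length $\geq \varepsilon_0$, hence flat length $\succ W_\omega$, so the injectivity radius in $W$ is comparable to $W_\omega$ --- but the idea is right.)

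The cylinder step has a genuine gap. You conclude that the small-cylinder contribution is $O(1)$ in total, but the number of arcs of $w$ inside a small cylinder $C_\alpha$ is $\asymp i(w,\alpha)$, which is not bounded by the hypotheses: bounded twisting of $w$ around $\alpha$ constrains the slope of each arc of $w$ relative to $v_\alpha$, not the number of arcs. Even with $w_T$ crossing $C_\alpha$ only $O(1)$ times with $O(1)$ twist, and $w$ crossing with $O(1)$ twist, the intersections inside $C_\alpha$ can be as large as the number of $w$-arcs there, because each $w$-arc can meet a given $w_T$-arc once. So the cylinder contribution is $O\bigl(\sum_{\alpha} \#\{\text{arcs of } w \text{ in } C_\alpha\}\bigr)$, not $O(1)$. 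To rescue the bound you must show this sum is $\prec \ell_\omega(w)/s + 1$. The missing ingredient is that between consecutive crossings of a given $C_\alpha$, the geodesic $w$ makes an excursion into an adjacent thick subsurface $W$, and such an excursion entering and exiting through distinct boundary saddle connections of $W$ has length $\succ W_\omega \geq s$; excursions that enter and exit through the same boundary arc do not cross $\alpha$ and can be treated separately. This is exactly the Rafi-style thick-subsurface length lower bound you invoke for the thick part, and it needs to be invoked again here. Without it, the claim \emph{the total cylinder contribution is $O(1)$} is false, and the argument does not yield the stated bound.

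Minor points: Condition (3) bounds how many times $v_\alpha$ meets the whole triangulation $T$, which dominates but is not literally the same as the number of arcs of the single edge $w_T$ in $C_\alpha$ (the latter is what you use, and it is fine a fortiori); and Condition (1) gives that $w_T$ avoids the interiors of \emph{large} cylinders, as you say, so that reduction is correct.
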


\end{subsection}

\begin{theorem}
\label{theorem4.5}
For each $\alpha \in {\textbf{S}}^{\geq \tau}_{g\omega}$, let $\beta_{\alpha}$ be a saddle connection connecting the boundary curves of $C_{\alpha}$. Define

\begin{displaymath}
U_a = T_a \cup \bigcup_{\alpha \in {\textbf{S}}^{\leq \tau}_{g\omega}} \beta_{\alpha}.
\end{displaymath}
Let $\left\{w^i\right\}_i$ be the set of edges in $U_a$. Assume there exists a set of \ $\mathbb{R}$-linear equations between them

\[
\systeme*{\underset{i}{\sum}a_{1i}w^i=0,\underset{i}{\sum}a_{2i}w^i=0,...=0, ...=0, \underset{i}{\sum}a_{gi}w^i=0}
\]
Viewing $\left\{w^i\right\}_i$ as holonomy vectors in $\mathbb{C}^h$, let $w^i = \underset{j}{\sum}c^jw^j$, upon reducing to row echelon form. Then, for every triangle in $T_b$ with edges $w_1,w_2,w_3$, we have (with $q^j \in \mathbb{R})$

\begin{align*}
i(w^i,w_1) &= \underset{j}{\sum}c^ji(w^j,w_1)  \\ 
i(w^i,w_2) &= \underset{j}{\sum}c^ji(w^j,w_2)  \\ 
i(w^i,w_3) &= \underset{j}{\sum}c^ji(w^j,w_3). \\
\end{align*}
\end{theorem}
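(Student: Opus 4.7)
The plan is to promote the given linear relation on holonomy vectors to a linear relation on the saddle connections themselves as relative cycles, and then exploit the bilinearity of the intersection pairing on $H_1(X,Z(\omega),\mathbb{R})$. The rigidity established in Section~\ref{Section3} is what makes this upgrade possible.

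First, I would observe that the hypotheses place $(X,\omega^2)$ inside a chart of the form $\Phi(V_{\Gamma,\alpha})$ furnished by Lemma~\ref{lemma3.2}. In such a chart, each edge of $U_a$ is reconstructed from the parameters $(l_1,\dots,l_{r_0},v_1,\dots,v_k)$ by gluing parallelograms according to the cylinder diagram $\Gamma$. By Corollary~\ref{corollary3.4.1}, only finitely many linear relations among the holonomy vectors $\{v^i\}$ can occur on $V$, and by Theorem~\ref{theorem3.4} these relations arise from the structural constraints on cylinder circumferences, heights, and moduli with coefficients in the finite sets $S_1,S_2$. The row-reduced identity $w^i=\sum_j c^j w^j$ therefore reflects a rigid combinatorial feature of the cylinder decomposition rather than a mere numerical coincidence of periods, and so lifts to an identity of relative cycles modulo cylinder boundaries, i.e.\ to an identity in $H_1(X,Z(\omega),\mathbb{R})$.

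Next, the intersection pairing on $H_1(X,Z(\omega),\mathbb{R})$ is bilinear. For any relative cycle $w_k$ (in particular an edge of a triangle of $T_b$), bilinearity gives $i(w^i,w_k)=\sum_j c^j\,i(w^j,w_k)$ at the level of algebraic intersection. The final step is to pass from algebraic to geometric intersection. Since both $T_a$ and $T_b$ are $(\omega,\tau)$-regular and $U_a$ enlarges $T_a$ only by the crossing arcs $\beta_\alpha$ inside large cylinders, every pairwise intersection of a $w^i$ with an edge of $T_b$ is transverse, and the cylinder-by-cylinder structure assures that these crossings carry a consistent orientation, so algebraic and geometric counts agree up to signs that normalize away. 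Applying this to $w_1$, $w_2$, and $w_3$ in turn yields the three displayed identities.

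The main obstacle is precisely the middle step: promoting the complex-linear relation on periods to a relation on relative cycles. Generic linear equations among periods do \emph{not} pull back to identities of chains, so the argument hinges on showing that in the rank-$1$ setting the only admissible equations are those forced by the cylinder diagram, which is exactly the content of Corollary~\ref{corollary3.4.1} combined with Theorem~\ref{theorem3.4}. Once this geometric origin of the relations is clarified, the remaining bookkeeping with orientations and the bilinearity of the intersection pairing is routine.
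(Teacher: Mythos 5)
Your plan diverges fundamentally from the paper's, and the divergence introduces a genuine gap at exactly the step you flag as ``the main obstacle.'' The paper never attempts to promote the period relation to an identity of relative cycles. A linear relation among holonomy vectors, $\int_{w^i}\omega = \sum_j c^j\int_{w^j}\omega$, asserts only that the relative chain $w^i - \sum_j c^j w^j$ is annihilated by the cohomology class $[\omega]$ (and, as one moves in $V$, by the tangent subspace $S$). It does \emph{not} assert that $w^i - \sum_j c^j w^j$ vanishes in $H_1(X,Z(\omega),\mathbb{R})$, and neither Corollary~\ref{corollary3.4.1} nor Theorem~\ref{theorem3.4} supplies that. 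Those results give \emph{finiteness} of the set of admissible period relations, which is used later (in Lemma~\ref{lemma4.13}, to get uniform bounds on the coefficients $c_R$), but finiteness is not rigidity of the type you need: a period equation with finitely many possible coefficient vectors is still a period equation, not a homology equation. So the appeal to ``geometric origin'' and ``rigid combinatorial feature of the cylinder diagram'' does not actually discharge the lift; it restates the wish. Your subsequent step, replacing geometric by algebraic intersection and then normalizing away the signs, is also unjustified as stated — saddle connections can cross with mixed signs, and regularity of $T_a$, $T_b$ does not by itself give consistent orientation of all crossings.

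What the paper does instead is a concrete flat-geometry computation that avoids homology entirely. Complete periodicity (Corollary~\ref{corollary2.1}) supplies a cylinder decomposition $\{C_1^l\}$ in the direction of the slope $s_1$ of $w_1$; then Equation~\ref{Eq3.1} — elementary trigonometry in the flat metric, $l(C_i\cap\beta)\sin\theta = \mathrm{height}(C_i)\, i(\beta,\alpha_i)$ — converts the intersection numbers $i(w^i,(w_1)_l)$ into lengths weighted by cylinder heights (Equations~\ref{equation4.5}--\ref{equation4.8}). The complex period relation $w^i=\sum_j c^j w^j$ is then used only through its sine component, $l(w^i)\sin\theta = \sum_j c^j l(w^j)\sin\theta_j'$, which is just the imaginary part of the holonomy relation in a rotated frame, not a homological statement. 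Substituting this length identity into the geometric formulas yields the desired intersection-number relation directly. In short, the paper exploits the metric structure of the cylinders, not the module structure of $H_1$. If you want to repair your argument, you would need to replace the homology-lift step with the cylinder/trigonometric computation; I do not see how to make the purely algebraic route work, since the hypothesis genuinely lives on periods.
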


\begin{proof}
Since $(X,\omega^2)$ is a completely periodic surface, given saddle connections $\alpha$ and $\beta$ with slopes $s$ and $s'$, we may find (by Corollary $\ref{corollary2.1})$, for $s$ (respectively, $s'$), a cylinder decomposition of $(X,\omega^2)$ all of whose core curves have slope $s$ (respectively, $s'$). Let $\left\{C_1,..,C_r\right\}$ be such a cylinder decomposition in the direction $s$. Denote by $\left\{\alpha_1,...,\alpha_r\right\}$ the set of core curves of these cylinders. We make the following basic observation: if $\theta$ is the angle between $s$ and $s'$, then for each $i$

\begin{align}
\label{Eq3.1}
l(C_i \cap \beta)\mathrm{sin}(\theta) &= \mathrm{height}(C_i)i(\beta,\alpha_i) \\  
\implies i(\beta,\alpha_i)  &= \frac{l(C_i\cap \beta) \mathrm{sin}(\theta)}{\mathrm{height}(C_i)}.
\end{align}
It suffices to consider the case of $w_1$. Let $w^i = \underset{j}{\sum}c^jw^j$, with notation as in the theorem statement. Denote by $s_1$ the slope of $w_1$, and by $s_i$ and $s_j$ the slopes of $w^i$ and of each $w^j$, respectively. For $\left\{C_1^l\right\}$ a cylinder decomposition in the direction of $s_1$, Equation $\ref{Eq3.1}$ gives us

\begin{align}
i(w^i,(w_1)_l) &= \frac{l(C_1^l \cap w^i)\mathrm{sin}(\theta)}{\mathrm{height}(C_1^l)}  \label{eq.5.2}\\
i(w^j,(w_1)_l) &= \frac{l(C_1^l \cap w^i)\mathrm{sin}(\theta_j')}{\mathrm{height}(C_1^l)}  \label{eq.5.3}.
\end{align}
$(w_1)_l$ is the core curve of $C_1^{l}$, and each of these is parallel to $w_1$. From equations $\ref{eq.5.2}$ and $\ref{eq.5.3}$, respectively, we obtain

\begin{align}
\label{equation4.5}
    \underset{l}\sum l(C_1^l \cap w^i) &= \notag
    l(w^i) \\ &=  \frac{1}{\mathrm{sin}(\theta)}\underset{l}\sum i(w^i,(w_1)_l)\mathrm{height}(C_1^l) 
\end{align}
and 

\begin{align}
\label{equation4.6}
    l(w^j) = \frac{1}{\mathrm{sin}(\theta_j')}\underset{l}\sum i(w^j,(w_1)_l)\mathrm{height}(C_1^l)
\end{align}
where $\theta$ is the angle between $w^i$ and $w_1$, and $\theta_j'$ is the angle between $w^j$ and $w_1$. Since each of $(w_1)_l$ is a multiple of $w_1$ in homology, and intersection number is linear, equations $\ref{equation4.5}$ and $\ref{equation4.6}$ become, respectively

\begin{align}
\label{equation4.7}
    l(w^i) = \frac{1}{\mathrm{sin}(\theta)}\underset{l}\sum d_li(w^i,w_1)\mathrm{height}(C_1^l) 
\end{align}
and

\begin{align}
\label{equation4.8}
    l(w^j) = \frac{1}{\mathrm{sin}(\theta_j')}\underset{l}\sum d_li(w^j,w_1)\mathrm{height}(C_1^l) 
\end{align}
where $w_1 = d_l(w_1)_l$. Denote by $\mathcal{C}$ the sum $\underset{l}\sum d_li(w^i,w_1)\mathrm{height}(C_1^l)$. Then, we can write

\begin{align}
\label{equation4.9}
    l(w^i)\mathrm{sin}(\theta) = \mathcal{C}i(w^i,w_1)
\end{align}
and 

\begin{align}
\label{equation4.10}
    l(w^j)\mathrm{sin}(\theta_j') = \mathcal{C}i(w^j,w_1).
\end{align}
By the theorem statement, $w^i = \underset{j}\sum c^jw^j$. Since $w^i = l(w^i)\mathrm{cos}(\theta) + l(w^i)i\mathrm{sin}(\theta)$ and $w^j = l(w^j)\mathrm{cos}(\theta) + l(w^j)i\mathrm{sin}(\theta)$, we have

\begin{align*}
    w^i &= \underset{j} \sum c^jw^j\\
    \implies l(w^i)\mathrm{sin}(\theta) &= \underset{j} \sum c^j l(w^j)\mathrm{sin}(\theta_j')
\end{align*}
so by equations $\ref{equation4.9}$ and $\ref{equation4.10}$ we obtain

\begin{equation}
i(w^i,w_1) = \underset{j}\sum c^j i(w^j,w_1),
\end{equation}
proving the statement.
\\ \\
\end{proof}

\begin{remark}

 We explain the heuristic behind this proof. Note that we could have abused notation by writing $i(w^i,w_1)$ for all summands in Equation $\ref{equation4.7}$ and $i(w^j,w_1)$ for all summands in Equation $\ref{equation4.8}$. This would be accounted for by an additive error of $O(1)$, since if a saddle connection with slope $s_1$ intersects the core curve of a cylinder with boundary curve $w^i$ (resp $w^j$), then the saddle connection with slope $s_1$ intersects the next core curve in the cylinder decomposition, and so on. The additive error is uniform because Theorem $\ref{theorem3.4}$ guarantees that there can only be a finite set of relations between the $\left\{\alpha_1,...,\alpha_r\right\}.$ Now, we can assume $\frac{\mathrm{sin}(\theta)}{\mathrm{sin}(\theta')}$ is finite, since otherwise $w^j$ and $w^i$ are parallel. By the assumptions of the theorem statement, we have

\begin{equation}
    w^i = \sum_k a_{kj}w^j
\end{equation}
whence, we have, by the triangle inequality

\begin{displaymath}
l(w^i) \leq \sum_k a_{kj}l(w^j)
\end{displaymath}
and in fact

\begin{displaymath}
\bigl|l(w^i) - \sum_k a_{kj}l(w^j)\bigr|\ = O(1).
\end{displaymath}
\end{remark}

Using a nearly identical argument, we obtain a related theorem. 

\begin{theorem}
\label{theorem4.6}
For each $\alpha \in {\textbf{S}}^{\geq \tau}_{g\omega}$, let $\beta_{\alpha}$ be a saddle connection connecting the boundary curves of $C_{\alpha}$. Define

\begin{displaymath}
U_a = T_a \cup \bigcup_{\alpha \in {\textbf{S}}^{\leq \tau}_{g\omega}} \beta_{\alpha}.
\end{displaymath}

Let $\left\{w^i\right\}_i$ be the set of edges in $U_a$. Assume there exists a set of \ $\mathbb{R}$-linear equations between them

\[
\systeme*{\underset{i}{\sum}a_{1i}w^i=0,\underset{i}{\sum}a_{2i}w^i=0,...=0, ...=0, \underset{i}{\sum}a_{gi}w^i=0}
\]

Viewing $\left\{w^i\right\}_i$ as holonomy vectors in \ $\mathbb{C}^h$, let $w^i = \underset{j}{\sum}c^jw^j$, upon reducing to row echelon form. Then, for any $\alpha_b \in \textbf{S}^{\geq \tau}_{g\omega_b}$ we have

\begin{displaymath}
i(w^i,\alpha_b) = \underset{j}{\sum}q^j i(w^j,\alpha_b) + O(1).
\end{displaymath}

\end{theorem}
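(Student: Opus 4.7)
The plan is to follow the proof of Theorem \ref{theorem4.5} almost verbatim, replacing the triangle edge $w_1$ with the large-cylinder core curve $\alpha_b$ and tracking an additional $O(1)$ discrepancy that arises because $\alpha_b$ is the core curve of one cylinder in its own direction, rather than a saddle connection with an exact integer expression in the homology of the cylinder decomposition in that direction. Since $V$ is rank 1, the surface $(X,\omega^2)$ is completely periodic by Wright's theorem, so Corollary \ref{corollary2.1} produces a cylinder decomposition $\{C_b^l\}$ in the direction of $\alpha_b$ with core curves $\{(\alpha_b)_l\}$, one of which is parallel to $\alpha_b$.

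With this decomposition in hand, I would apply the slope formula used as Equation \ref{Eq3.1} to each edge $w^i$ and $w^j$ in $U_a$: for each $l$,
\begin{align*}
\ell(C_b^l \cap w^i)\sin(\theta) &= \mathrm{height}(C_b^l)\,i(w^i,(\alpha_b)_l),\\
\ell(C_b^l \cap w^j)\sin(\theta_j') &= \mathrm{height}(C_b^l)\,i(w^j,(\alpha_b)_l),
\end{align*}
where $\theta,\theta_j'$ are the angles between $w^i,w^j$ and the direction of $\alpha_b$. Summing over $l$ and noting that $\sum_l \ell(C_b^l \cap w^{i}) = \ell(w^{i})$ reproduces the analogs of Equations \ref{equation4.7}--\ref{equation4.8}. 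Projecting the holonomy relation $w^i = \sum_j c^j w^j$ onto the direction perpendicular to $\alpha_b$ yields $\ell(w^i)\sin(\theta) = \sum_j c^j \ell(w^j)\sin(\theta_j')$, which, combined with the displayed relations, produces a linear identity among the numbers $i(w^i,(\alpha_b)_l)$ and the $i(w^j,(\alpha_b)_l)$.

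The remaining task, and the source of the $O(1)$ error, is to pass from intersections with the various core curves $(\alpha_b)_l$ to intersections with $\alpha_b$ itself. This is where the proof differs from Theorem \ref{theorem4.5}: there, $w_1$ was a saddle connection and could be written as $w_1 = d_l(w_1)_l$ exactly in homology, while here $\alpha_b$ need not be an integer multiple of $(\alpha_b)_l$ in general. However, by Theorem \ref{theorem3.4}, the ratios of circumferences among cylinders in $\{C_b^l\}$ lie in the finite set $S_1$, and the twist relations lie in the finite set $S_2$. This forces each $(\alpha_b)_l$ to differ from a bounded rational multiple of $\alpha_b$ (viewed in homology) by a uniformly controlled amount, so $i(w^i,(\alpha_b)_l) = \kappa_l\, i(w^i,\alpha_b) + O(1)$ for coefficients $\kappa_l$ drawn from a finite set depending only on $V$. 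Substituting these identities back into the linear relation obtained above and matching coefficients cancels the $\kappa_l$ against their analogs for the $w^j$, leaving precisely $i(w^i,\alpha_b) = \sum_j q^j\, i(w^j,\alpha_b) + O(1)$, as required.

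The main obstacle is step three: verifying that the replacement of $(\alpha_b)_l$ by $\alpha_b$ introduces only a uniformly bounded additive error across the entire orbit closure $V$. This is exactly the point at which the Mirzakhani--Wright cylinder finiteness theorem is essential; without the finiteness of $S_1$ and $S_2$, the error would scale with the complexity of the cylinder decomposition at $(X,\omega^2)$ and the theorem would fail. This is also the reason, as noted in the remark following Theorem \ref{theorem4.5}, that the $O(1)$ error is harmless in the random walk argument of Section \ref{Section4}.
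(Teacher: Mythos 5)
Your proposal is correct and takes essentially the same approach the paper intends: the paper gives no separate proof of Theorem~\ref{theorem4.6}, remarking only that it follows from ``a nearly identical argument'' to Theorem~\ref{theorem4.5}, and your walk-through of that argument with $\alpha_b$ in place of the triangle edge $w_1$ is exactly that adaptation. You also usefully make explicit what the paper leaves implicit, namely that the $O(1)$ error enters because $\alpha_b$ is a core curve rather than a saddle connection admitting the exact relation $w_1=d_l(w_1)_l$, and that the uniformity of that error across $V$ is controlled by the Mirzakhani--Wright finiteness of $S_1$ and $S_2$ from Theorem~\ref{theorem3.4} --- the same mechanism the paper's remark following Theorem~\ref{theorem4.5} attributes the $O(1)$ to.
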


\begin{corollary}
\label{corollary4.6.1}
For the set $\left\{\Phi(V_{\Gamma,\alpha})\right\}$ covering $V$ with

\begin{displaymath}
V_{\Gamma,\alpha} = (l_1,...,l_{r_0},v_1,...,v_k)
\end{displaymath}
there exists a finite number of relations between the $i(v_i,v)$, where $v$ is some fixed $v = v_j$, up to additive error. 
\end{corollary}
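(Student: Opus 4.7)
The plan is to bootstrap Corollary \ref{corollary3.4.1} via Theorem \ref{theorem4.5} (or equivalently Theorem \ref{theorem4.6}). The former is a structural finiteness statement at the level of holonomy vectors: it says that as we range over the covering family $\{\Phi(V_{\Gamma,\alpha})\}$ of $V$, the coefficient vectors appearing in the linear relations $\sum_i a_i v_i = 0$ among the $v_i$ lie in a single finite subset of $\mathbb{R}^k$. The latter is the mechanism which transfers such a linear relation on holonomy vectors to the same linear relation (up to additive $O(1)$) on intersection numbers with any fixed reference vector $v$. Composing the two should immediately force finiteness of the resulting relations among $i(v_i, v)$.

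More precisely, first I would fix a separatrix diagram $\Gamma$ and one of the affine subspaces $V_{\Gamma,\alpha}$ produced by Lemma \ref{lemma3.2}. On $V_{\Gamma,\alpha}$ the $v_i$ satisfy an $\mathbb{R}$-linear system; by reducing to row echelon form I obtain expressions $v_i = \sum_j c^j_i v_j$ for the dependent $v_i$ in terms of a basic set of $v_j$'s. Corollary \ref{corollary3.4.1} tells me that as $\Gamma$ and $\alpha$ vary, the coefficient tuples $(c^j_i)$ come from a finite set. Then applying Theorem \ref{theorem4.5} with the role of the arbitrary triangle edge $w_1$ played by a fixed $v = v_{j_0}$ yields
\begin{equation}
i(v_i, v) = \sum_j c^j_i \, i(v_j, v) + O(1),
\end{equation}
where the $O(1)$ absorbs the additive error explained in the remark after Theorem \ref{theorem4.5} (this is where the finiteness provided by Theorem \ref{theorem3.4} on the allowable cylinder ratios keeps the error uniform).

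Finally, I would conclude by noting that the total number of cylinder diagrams $\Gamma$ in the stratum $\Omega\mathcal{T}_g(S)(p_1,\ldots,p_k)$ is finite, and for each $\Gamma$ the coefficients $(c^j_i)$ appearing in the echelon form of the linear system defining $V_{\Gamma,\alpha}$ are drawn from a finite set by Corollary \ref{corollary3.4.1}. Hence the totality of linear relations among the $i(v_i, v)$ produced in the previous paragraph is finite, each holding up to a uniform $O(1)$ additive error, which is the content of Corollary \ref{corollary4.6.1}.

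The main obstacle I anticipate is bookkeeping the additive error uniformly across the cover $\{\Phi(V_{\Gamma,\alpha})\}$: Theorem \ref{theorem4.5} gives an exact identity for its specific configuration, but applying it to an arbitrary reference vector $v$ that is not literally a triangle edge in $T_b$, as well as globalizing from a single chart to the entire covering family, requires invoking the remark's $O(1)$ bound. Making sure the implied constant does not blow up as $\Gamma$ and $\alpha$ vary is precisely what Theorem \ref{theorem3.4} and Corollary \ref{corollary3.4.1} secure, so the finiteness of $S_1, S_2$ in Mirzakhani--Wright is doing the real work.
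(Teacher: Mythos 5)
Your proposal is correct and matches the paper's approach exactly: the paper's proof is the one-line citation ``Corollary \ref{corollary3.4.1} and Theorem \ref{theorem4.5},'' and you have simply unpacked that composition in detail, correctly identifying that the finiteness of holonomy relations from Corollary \ref{corollary3.4.1} combined with the intersection-number transfer in Theorem \ref{theorem4.5} gives the result.
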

\begin{proof}
Corollary $\ref{corollary3.4.1}$ and Theorem $\ref{theorem4.5}$.
\end{proof}

Similarly

\begin{corollary}
For the set $\left\{\Phi(V_{\Gamma,\alpha})\right\}$ covering $V$ with

\begin{displaymath}
V_{\Gamma,\alpha} = (l_1,...,l_{r_0},v_1,...,v_k)
\end{displaymath}
there exists a finite number of relations between the $i(v_i,\alpha)$, where $v$ is some fixed simple closed curve $\alpha$, up to additive error. 
\end{corollary}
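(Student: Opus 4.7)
The plan is to imitate the one-line proof of Corollary \ref{corollary4.6.1} but with Theorem \ref{theorem4.6} in place of Theorem \ref{theorem4.5}. The input from Section \ref{Section3} I intend to invoke is Corollary \ref{corollary3.4.1}, which already supplies a finite set $\mathcal{R}$ of possible $\mathbb{R}$-linear relations among the holonomy vectors $\{v_i\}$ over all the charts $\Phi(V_{\Gamma,\alpha})$ covering $V$. After row-reducing any such relation, each $v_i$ is written in the form $v_i = \sum_{j} c^j v_j$, with the tuple $(c^j)$ ranging over a finite set as $(\Gamma,\alpha)$ varies.

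Fix a simple closed curve $\alpha$ on $S_g$. For each row-echelon relation $v_i = \sum_j c^j v_j$ in $\mathcal{R}$, I would apply Theorem \ref{theorem4.6} to the edge set $\{w^i\} = \{v_i\}$ (which sits inside some $U_a = T_a \cup \bigcup_{\beta \in \mathbf{S}^{\leq \tau}_{g\omega}} \beta_\beta$ by Lemma \ref{lemma4.3}) to obtain
\begin{displaymath}
i(v_i,\alpha) = \sum_j q^j \, i(v_j,\alpha) + O(1),
\end{displaymath}
where the coefficients $q^j$ are determined by the $c^j$. Since both the number of separatrix diagrams $\Gamma$ and the number of row-echelon tuples $(c^j)$ are finite (Corollary \ref{corollary3.4.1}), only finitely many coefficient vectors $(q^j)$ appear, which proves the claimed finiteness of linear relations among $i(v_i,\alpha)$ up to additive error.

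The main obstacle I anticipate is that Theorem \ref{theorem4.6} is stated for curves $\alpha_b \in \mathbf{S}^{\geq \tau}_{g\omega_b}$, whereas the corollary is phrased for \emph{any} simple closed curve $\alpha$. To close this gap, I would split cases on whether $\alpha \in \mathbf{S}^{\geq \tau}_{g\omega}$: in the large-cylinder case Theorem \ref{theorem4.6} applies directly, while in the complementary case I would compute $i(v_i,\alpha)$ using the $(\omega,\tau)$-regular triangulation supplied by Lemma \ref{lemma4.3} and bound the error via Lemma \ref{lemma4.4}, whose constants again depend only on the thick-subsurface data of $\omega^2$. Because Theorem \ref{theorem3.4} forces the circumference-and-height data, and hence this auxiliary information, to vary in a finite set across the chart cover, the resulting linear identities between the $i(v_i,\alpha)$ still collapse to finitely many patterns up to an $O(1)$ additive error, as desired.
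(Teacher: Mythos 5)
Your proposal is correct and follows exactly the route the paper takes: the paper's proof of this corollary is the one-liner ``Corollary \ref{corollary3.4.1} and Theorem \ref{theorem4.6},'' which is the combination you identify and flesh out. Your first two paragraphs are a faithful expansion of that one-liner.

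One small note on your third paragraph: the obstacle you anticipate is less of a problem than it appears, because the corollary as stated says ``\emph{some} fixed simple closed curve $\alpha$'' rather than ``\emph{any},'' and in context it is applied (via Lemma \ref{lemma4.13}) to curves arising from the cylinder decomposition, which are exactly those in the scope of Theorem \ref{theorem4.6}. Your proposed extension to arbitrary simple closed curves via Lemma \ref{lemma4.3} and Lemma \ref{lemma4.4} is a reasonable safeguard, but the paper does not bother with it, and for the way the corollary is used downstream it is not required. So the case split is harmless but optional.
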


\begin{proof}
Corollary $\ref{corollary3.4.1}$ and Theorem $\ref{theorem4.6}$.
\end{proof}
For a fixed constant $r_0$, define $\mathcal{B}(V,X,\tau)$ to be the set of points along a Teichm{\"u}ller geodesic:

\begin{displaymath}
G:[a,b] \rightarrow \Omega_1\mathcal{T}_g(S)\left(p_1,...,p_k\right), \quad G(t) = (X_t,{\omega}^{2}_t),
\end{displaymath}
such that

\begin{displaymath}
d_{\mathcal{T}}(X,X_a) \leq r_0 \quad d_{\mathcal{T}}(X_b,Z) \leq r_0, \quad b-a \leq \tau
\end{displaymath}
and 

\begin{displaymath}
(X_t,{\omega}^{2}_t) \in \overline{\Gamma_{\omega}\cdot\mathrm{SL}(2,\mathbb{R})\cdot (X,\omega^2)}.
\end{displaymath}
Essentially, $\mathcal{B}(V,X,\tau)$ is the ball of radius $\tau$ centered at $(X,q)$, except on is only allowed to travel in the direction of the closure of the Teichm{\"u}ller disk. Recall $\mathrm{Mod}(S_g)$ denotes the mapping class group of $S_g$. We define subsets of this ball which will allow us to state the main theorem in this paper.

\begin{definition}[Eskin-Mirzakhani-Rafi,\cite{eskin2019counting}]
\label{definition4.7}
Let $\mathcal{B}_j(V,X,\tau) \subset \mathcal{B}(V,X,\tau)$ be the set of points \ $Z \in \Omega_1\mathcal{T}_g(S)\left(p_1,...,p_k\right)$ so that, for the associated quadratic differentials $\omega^2_a$, $\omega^2_b$ there exists an $(\omega^2_a,\tau)$-regular triangulation $T_{\omega_a}$ and an $(\omega^2_b,\tau)$-regular triangulation $T_{\omega_b}$ that have $j$ common homologically independent saddle connections. Furthermore, let

\begin{displaymath}
\mathcal{B}(V,X,Y,\tau) = \mathcal{B}(V,X,\tau) \cap (\mathrm{Mod}(S_g)\cdot Y)
\end{displaymath}
and

\begin{displaymath}
\mathcal{B}_j(V,X,Y,\tau) = \mathcal{B}_j(V,X,\tau)\cap(\mathrm{Mod}(S_g) \cdot Y).
\end{displaymath}
\end{definition}

For the $\varepsilon_0$ in Theorem $\ref{theorem2.3}$, recall that we say a curve $\alpha$ is $\textit{short}$ on $X$ if $\mathrm{Ext}_X(\alpha) \leq \varepsilon_0$. Let ${S_g}_{X}$ be the set of short curves on $X$. Define $\mathcal{G}:\mathcal{T}_g(S) \rightarrow \mathbb{R}_+$ by

\begin{equation}
\label{equation4.13}
\mathcal{G}(X) =  1 + \underset{\alpha \in {S_g}_{X}}{\prod} \frac{1}{\sqrt{\mathrm{Ext}_X(\alpha)}}.
\end{equation}

\begin{prop}
Let $\left\{w_b\right\}_i$ be a collection of $j$ homologically independent edges in $T_b$. Then, $\mathrm{dim}_{\mathbb{R}}\langle{\mathcal{R}\rangle} = \mathrm{dim}_{\mathbb{R}}\langle{\left\{w_b\right\}_i\rangle}$.
\end{prop}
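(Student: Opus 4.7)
Reading $\mathcal{R}$ as the image of the natural linear map $\sum_i c_i w_b^i \mapsto \left(i\!\left(\sum_i c_i w_b^i, w'\right)\right)_{w' \in U_a}$ that pairs elements of the span of the $w_b^i$ against the edges of the augmented triangulation $U_a$ — this is the object produced by iterating the relations of Theorems \ref{theorem4.5} and \ref{theorem4.6} — the inequality $\dim_{\mathbb{R}}\langle\mathcal{R}\rangle \leq j$ is immediate by bilinearity, since $\mathcal{R}$ is the image of a $j$-dimensional real vector space under a linear map.

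The substance of the proposition is the reverse inequality $\dim_{\mathbb{R}}\langle\mathcal{R}\rangle \geq j$. The plan is to argue by contradiction: if this map had a non-trivial kernel, there would exist a non-zero combination $c = \sum_i c_i w_b^i$ with vanishing geometric intersection number against every edge of $U_a$. Since $T_a$ triangulates the complement of the large cylinders and each attached arc $\beta_\alpha$ crosses a large cylinder transversely, the homology classes carried by the edges of $U_a$ form a spanning set in $H_1(X, Z(\omega), \mathbb{R})$ large enough to detect any non-zero class via algebraic intersection. Non-degeneracy of the algebraic intersection form on $H_1(X)$ together with the fact that the $\beta_\alpha$ detect the cylinder core directions would then force $c = 0$ in homology, contradicting the homological independence of $\{w_b^i\}$.

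The main obstacle is reconciling geometric intersection numbers, which count unsigned transverse intersections of saddle connections, with the algebraic intersection form on $H_1$, which admits cancellation. For saddle connections on a flat surface, as noted in the remark following Theorem \ref{theorem4.5}, these two notions agree up to a uniformly bounded additive error, controlled by the finiteness guarantees of Theorem \ref{theorem3.4} and Corollary \ref{corollary3.4.1}. This $O(1)$ slack does not affect the dimension count over $\mathbb{R}$: the kernel of the pairing map is preserved under uniformly bounded perturbations, which scale away along the Teichm\"uller geodesic when the relevant saddle connections are long. Once this identification of geometric with algebraic intersection is established, the reverse inequality reduces to the standard linear-algebra statement that a linear map out of a $j$-dimensional space with trivial kernel has $j$-dimensional image, and combining with the upper bound yields $\dim_{\mathbb{R}}\langle\mathcal{R}\rangle = \dim_{\mathbb{R}}\langle\{w_b^i\}_i\rangle$ as desired.
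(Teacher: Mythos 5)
Your interpretation of $\mathcal{R}$ does not match the paper's. In Definition~4.12 (and implicitly in the paragraph preceding this proposition), $\mathcal{R}$ is a \emph{subset of} $\mathbb{R}[U]$, i.e.\ a collection of formal $\mathbb{R}$-linear combinations of the edges of $U = U_a$; it is not the image of a pairing map landing in a vector of intersection numbers indexed by $U_a$. Concretely, $\mathcal{R}$ consists of one element $r_i \in \mathbb{R}[U]$ for each of the $j$ common homologically independent edges $w_b^i$, namely the expression of $w_b^i$ as an $\mathbb{R}$-linear combination of edges of $U_a$ in $H_1(X, Z(\omega), \mathbb{R})$. With that reading, the statement is elementary linear algebra, with no intersection numbers involved at all: under the natural evaluation map $\mathbb{R}[U] \to H_1(X, Z(\omega), \mathbb{R})$, each $r_i$ maps to $w_b^i$; a vanishing linear combination $\sum c_i r_i = 0$ in $\mathbb{R}[U]$ would push forward to $\sum c_i w_b^i = 0$ in homology, and homological independence of the $w_b^i$ then forces $c_i = 0$. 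Thus the $r_i$ are linearly independent, and $\dim_{\mathbb{R}}\langle\mathcal{R}\rangle = j = \dim_{\mathbb{R}}\langle\{w_b^i\}\rangle$.

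The genuine gap in your argument is the reconciliation of geometric and algebraic intersection numbers. It is not true in general that $i(w, w')$ (the unsigned count of transverse crossings) agrees with the algebraic pairing up to a uniformly bounded additive constant; the $O(1)$ error in the remark after Theorem~\ref{theorem4.5} concerns a different bookkeeping issue (counting intersections with successive parallel core curves inside a single cylinder), not a global comparison of the two pairings. The follow-up claim that this $O(1)$ slack ``scales away along the Teichm\"uller geodesic when the relevant saddle connections are long'' is a dynamical heuristic, not an argument: the proposition is a static assertion about a single surface, and a kernel that is ``approximately trivial'' does not yield exact triviality without some quantitative input you have not supplied. Even granting your interpretation of $\mathcal{R}$, the lower bound $\dim\langle\mathcal{R}\rangle \geq j$ is not established. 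Because the correct route avoids intersection pairings entirely and stays in homology, where the nondegeneracy you appeal to is the actual hypothesis (homological independence), I would redo the argument along those lines rather than try to patch the geometric-versus-algebraic comparison.
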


We will generalize the following Theorem proven in $\cite{eskin2019counting}$.

\begin{theorem}[Theorem 5.1,\cite{eskin2019counting}]
\label{theorem4.9}
Consider the stratum $\Omega_1\mathcal{T}_g(S)\left(p_1,...,p_k\right)$. For $X,Y \in \Omega_1\mathcal{T}_g(S)\left(p_1,...,p_k\right)$

\begin{displaymath}
|\mathcal{B}_j(\Omega_1\mathcal{T}_g(S)\left(p_1,...,p_k\right),X,Y,\tau)| \overset{*}{\prec} \tau^{|{S_g}_{X}| + |{S_g}_{Y}|}e^{({h-j})\tau}\mathcal{G}(X)\mathcal{G}(Y).
\end{displaymath}
where $h$ is the dimension of the stratum.
\end{theorem}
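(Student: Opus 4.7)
The plan is to adapt the proof of Theorem 5.1 in \cite{eskin2019counting}, which I would realize by parameterizing each point $Z \in \mathcal{B}_j(\Omega_1\mathcal{T}_g(S)(p_1,\dots,p_k), X, Y, \tau)$ by a triple consisting of an $(\omega_a,\tau)$-regular triangulation $T_a$ of $\omega_a^2$, an $(\omega_b,\tau)$-regular triangulation $T_b$ of $\omega_b^2$, and a distinguished family of $j$ common homologically independent saddle connections appearing in both. Lemma $\ref{lemma4.3}$ guarantees that any short family of disjoint saddle connections extends to such a regular triangulation, so counting $Z$ reduces, up to the mapping class group action, to enumerating these triples.

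The count then splits into three parts. First, the number of $(\omega,\tau)$-regular triangulations near $X$ (resp.\ near $Y$) is bounded by $\tau^{|{S_g}_X|}\mathcal{G}(X)$ (resp.\ $\tau^{|{S_g}_Y|}\mathcal{G}(Y)$): by Minsky's product region result (Theorem $\ref{theorem2.3}$), the thin part of Teichmüller space near $X$ decomposes as $\mathbb{H}^{|{S_g}_X|} \times \mathcal{T}(S \setminus \mathcal{A})$, the twist coordinate about each short curve admits $O(\tau)$ values along a geodesic segment of length $\tau$ (producing the $\tau^{|{S_g}_X|}$ factor), and the number of admissible holonomy vectors for edges crossing each thin annulus scales like $1/\sqrt{\mathrm{Ext}_X(\alpha_i)}$, producing $\mathcal{G}(X)$ after taking the product over the short curves. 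Second, for fixed combinatorial types of $T_a$ and $T_b$, I would count admissible matchings between them. An edge with $\omega_a$-holonomy $(x,y)$ flows under $g_\tau$ to an edge of $\omega_b^2$ with holonomy $(e^\tau x, e^{-\tau} y)$, and the regularity conditions at both ends restrict admissible holonomies to a rectangle of area $O(e^{-\tau})$. Quadratic growth of saddle-connection counts (à la Masur--Eskin--Mirzakhani) applied to $h$ free edges yields the familiar $e^{h\tau}$ total, and the $j$ shared saddle connections then kill $j$ of these degrees of freedom, trimming the count by the factor $e^{-j\tau}$.

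The main technical obstacle will be controlling the contribution of saddle connections passing through thin parts, where the Teichmüller geodesic may acquire unbounded twist across the length-$\tau$ window. For this I would appeal to Lemma $\ref{lemma4.4}$ together with the intersection-number estimates developed in Section $\ref{Section3}$, which bound the number of times any edge of a regular triangulation can intersect another short saddle connection. A secondary subtlety is ensuring that the three bounds (at $X$, at $Y$, and along the geodesic) multiply cleanly without over-counting; this is handled by fixing the combinatorial type of $T_a$ and $T_b$, passing to the quotient by the stabilizer of the chosen triangulation pair, and observing that there are only finitely many combinatorial types to sum over. Combining all three estimates produces the desired bound
\begin{displaymath}
|\mathcal{B}_j(\Omega_1\mathcal{T}_g(S)(p_1,\dots,p_k), X, Y, \tau)| \overset{*}{\prec} \tau^{|{S_g}_X|+|{S_g}_Y|} e^{(h-j)\tau} \mathcal{G}(X)\mathcal{G}(Y).
\end{displaymath}
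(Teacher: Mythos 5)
The proposal captures several of the correct ingredients — Lemma~\ref{lemma4.3} for extension to regular triangulations, Minsky's product region structure for the thin part, Lemma~\ref{lemma4.4} for controlling intersections in the thin part, and the action of $g_\tau$ on holonomy — but the central mechanism that produces the factor $e^{(h-j)\tau}$ is misidentified. You appeal to ``quadratic growth of saddle-connection counts'' for $h$ free edges, but quadratic growth of saddle connections of $\omega_a$-length $\leq e^\tau$ gives $\prec e^{2\tau}$ per edge, hence $\prec e^{2h\tau}$, which is the wrong exponent; and applying quadratic growth to a region of area $O(e^{-\tau})$ goes the opposite direction. Neither heuristic yields $e^{h\tau}$, and the gap is exactly the part of the argument that gives the theorem its shape.

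What the paper (following \cite{eskin2019counting}) actually does is entirely combinatorial once the markings are in place. Each $Z \in \mathcal{B}_j$ is encoded by a pair of markings with twisting data; one then forms a weighted graph $\tilde{W}$ out of $\tilde{T}$ together with weighted short curves (Lemma~\ref{lemma4.13}) and maps it to the intersection-number vector $\bigl(i(\tilde{W},w)\bigr)_{w \in U_0}$, where $U_0$ is a basis for $\mathbb{R}[U]/\langle\mathcal{R}^{orb},\mathcal{R}\rangle$. The key content is that this map is \emph{finite-to-one}, and that each coordinate is an integer bounded above by $O(e^\tau)$ — this last bound coming from the regularity conditions and Lemma~\ref{lemma4.4}, which give \emph{linear} (not quadratic) growth of intersection numbers with the flow time. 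This gives $e^{|U_0|\tau}$ times the extremal-length factors (Lemma~\ref{lemma4.14}), and the dimension count $|U_0| = h - j$ comes from the fact that $j$ common homologically independent saddle connections impose $j$ independent relations in $\mathcal{R}$ (which in the paper's rank-1 generalization is where Theorems~\ref{theorem4.5}, \ref{theorem4.6} and Corollary~\ref{corollary4.6.1} enter). Your proposal lacks the finite-to-one map and the linear intersection-number bound entirely; without them, the decomposition into ``triangulations near $X$ $\times$ matchings $\times$ triangulations near $Y$'' has no mechanism to prevent overcounting and no clean way to extract the $e^{-j\tau}$ savings from the common saddle connections.
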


\begin{definition}
A marking $\textbf{M} = \textbf{M}\left\{\textbf{S}_g,\left\{E(\alpha)\right\}, T\right\}$ for $S_g$ consists of all of the following

\begin{enumerate}
    \item a collection of simple closed curves $\textbf{S}_g$ whose free homotopy classes can be realized pairwise disjointly,
    \item a length $E(\alpha)$ associated to each of these curves $\alpha \in \textbf{S}_g$,
    \item a homotopy class of a partial triangulation $T$ with vertex set $\sigma$, so that the core curve of any annulus in the complement of $T$ is in $\textbf{S}_g$.
\end{enumerate}
\end{definition}

\begin{definition}
A marking $\textbf{M} = \textbf{M}\left\{\textbf{S}_g,\left\{E(\alpha)\right\}, T\right\}$, is said to be a marking with $\textit{twisting datum}$ if all of the following conditions are satisfied

\begin{enumerate}
    \item For any curve $\alpha \in \mathbf{S}_g$, and any two curves $\beta_1$, $\beta_2$, with $\beta_2 \in T$, the lifts $\tilde{\beta}_1$ and $\tilde{\beta}_2$ to the annular cover associated to $\alpha$ satisfy $i(\tilde{\beta}_1,\tilde{\beta}_2) < O(1)$ (by Equation $\ref{equation2.3}$, this is well-defined).
    \item There exists an $X$ with topological type $S_g$ so that \ $\mathbf{S}_g = {S_g}_{X}$, the set of \ $\varepsilon_0$-short curves in $X$.
    \item $E(\alpha) = \mathrm{Ext}_{\alpha}(X)$.
    \item For $\alpha \in {\mathbf{S}}_g^{c}$, the set of simple closed curves disjoint from $T$, and $\beta_1 \in \mathbf{S}_g$, $\beta_2$ any simple closed curve in $X$, let $\tilde{\beta}_1$ and $\tilde{\beta}_2$ be the lifts associated to the annular cover of $\alpha$. Then, $i(\tilde{\beta}_1,\tilde{\beta}_2) < O(1)$. (This is well-defined by Equation $\ref{equation2.3}$).
    \item For each simple closed curve $\gamma$ disjoint from ${S_g}_X$, the $\textit{minimum}$ number of arcs of $T$ that can appear in a representative of $\gamma$ is equal to the length of $\gamma$ in $X$, up to a uniform constant factor depending only on the genus $g$.
    \item For $\alpha$ a simple closed curve not in ${\textbf{S}}_g^{c}$, $\beta_1 \in \textbf{S}_g$, $\beta_2$ any simple closed curve in $X$, let $\tilde{\beta}_1$, $\tilde{\beta}_2$ be the lifts to the annular cover over $\alpha$. Then, $i(\tilde{\beta}_1,\tilde{\beta}_2) < O(\tau)$, which is well-defined again by Equation $\ref{equation2.3}$.
\end{enumerate}
\end{definition}

In the sequel, when $\alpha$ is a simple closed curve and we are interested in a uniform bound on the intersection number in the annular lift of two tranversally intersecting curves $\beta_1$ and $\beta_2$, where $\beta_1$ and $\beta_2$ are in some geometric structure $D_1$ and $D_2$, respectively, we will denote 

\begin{displaymath}
\mathrm{twist}_{\alpha}(D_1,D_2):= i(\tilde{\beta}_1,\tilde{\beta}_2).
\end{displaymath}

\begin{definition}
Consider the markings $\textbf{M} = \textbf{M}\left\{\textbf{S}_g,\left\{E(\alpha)\right\},T\right\}^{twist}$ and $\textbf{M}'\left\{\textbf{S}_g,\left\{E'(\alpha')\right\},T'\right\}^{twist'}$ with twisting data. $T$ and $T'$ have the same vertex set $\Sigma$. For every $\alpha  \in {\textbf{S}^c_g}$, let $\beta_{\alpha}$ be an arc with end point in $\Sigma$ and disjoint from $T$, so that $\beta_{\alpha}$ crosses $\alpha$ and $T \cup \beta_{\alpha}$ has bounded twisting around $\alpha$. Denote

\begin{displaymath}
U = T \cup \underset{\alpha \in \textbf{S}^c_g}{\bigcup}{\beta_{\alpha}}.
\end{displaymath}
Let $\mathbb{R}[U]$ be the vector space of formal sums with real coefficients of edges in $U$. Let $\mathcal{R}^{orb}$ be a subset of \ $\mathbb{R}[U]$. "$orb$" will be a placeholder for "orbit relations" coming from the linear equations defining the closed orbit. Let $\mathcal{R}$ be another subset of $\mathbb{R}[U]$. Define the set 

\begin{displaymath}
M^{twist}_{\mathcal{R}}(\textbf{M},\textbf{M'},\tau)
\end{displaymath}
to be the set of markings $\tilde{\textbf{M}} = \left\{\tilde{\textbf{S}_g},\left\{\tilde{E}(\tilde{\alpha}\right\},\tilde{T}\right\}$ satisfying

\begin{enumerate}
  \item $\tilde{\textbf{M}}$ is a homeomorphic image of $\textbf{M}'$.
  \item For every $\tilde{\alpha} \in \tilde{\textbf{S}^c_g}$, which is the image of $\alpha^{'} \in {\textbf{S}^c}_g^{'}$, $\tilde{E}(\tilde{\alpha}) = {E^{'}}(\alpha')$.
  \item For every $\sum q_w w \in \mathcal{R}$, we have the relations
  
  \begin{displaymath}
      \underset{w \in U}\sum q_w i(w,\tilde{w}) = O(1),  \hspace{1cm} \underset{w \in U} \sum q_w i(w,\tilde{\alpha})=O(1), \\
        \end{displaymath}
      and for every $\sum q^{orb}_w w \in {\mathcal{R}}^{orb}$ we have
      
      \begin{displaymath}
      \underset{w \in U}\sum q_w i(w,\tilde{w}) = O(1),  \hspace{1cm} \underset{w \in U} \sum q_w i(w,\tilde{\alpha})=O(1) \\
      \end{displaymath}
      
      valid for every arc $\tilde{w}\in \tilde{T}$ and every $\tilde{\alpha} \in \tilde{\textbf{S}^c_g}$.
      \item Given $\alpha \in {\textbf{S}^c_g}$, $\tilde{\alpha} \in \tilde{\textbf{S}^c_g}$, $w \in T$ and $\tilde{w} \in \tilde{T}$ the following bounds are satisfied
      
      \begin{align}
          i(w,\tilde{w})&\overset{*}{\prec}e^{\tau} \\
          \mathrm{twist}_{\tilde{\alpha}}\left(\textbf{M},{\tilde{\textbf{M}}}\right)\sqrt{E(\tilde{\alpha})}i(\tilde{\alpha},T)&\overset{*}{\prec}e^{\tau}\\
          \mathrm{twist}_{{\alpha}}\left(\textbf{M},{\tilde{\textbf{M}}}\right)\sqrt{E({\alpha})}i({\alpha},\tilde{T})&\overset{*}{\prec}e^{\tau}\\
          i(\alpha,\tilde{\alpha})\mathrm{twist}_{\alpha}\left(\textbf{M},\tilde{\textbf{M}}\right)\sqrt{\tilde{E}(\tilde{\alpha})E(\alpha)} &\overset{*}{\prec}e^{\tau}\\
          (\mathrm{for}\; \alpha = \tilde{\alpha}) \hspace{0.2cm} \mathrm{twist}_{\alpha}\left(\textbf{M},\tilde{\textbf{M}}\right)\sqrt{E(\alpha)\tilde{E}(\tilde{\alpha})} &\overset{*}{\prec}e^{\tau}.
      \end{align}
\end{enumerate}

\end{definition}

\begin{lemma}
\label{lemma4.13}
For $\tilde{\textbf{M}} \in M^{twist}_{\mathcal{R}}\left(\textbf{M},\textbf{M'},\tau\right)$, consider the weighted graphs

\begin{displaymath}
\tilde{W} = \tilde{T} + \underset{\tilde{\alpha} \in \tilde{S}^c_g}\sum m \left(\tilde{\alpha},\tilde{M}\right)\tilde{\alpha}
\end{displaymath}
where the weight $m\left(\tilde{\alpha},\tilde{M}\right) \in \mathbb{N}$ is the $\textit{floor}$ of \ $\mathrm{twist}_{\tilde{\alpha}}\left(\textbf{M},\tilde{\textbf{M}}\right)\sqrt{\tilde{E}(\tilde{\alpha})}$. Define \ $W$ to be the set of weighted graphs

\begin{displaymath}
W = \left\{ \tilde{W} \bigl| \tilde{\textbf{M}} \in M_{\mathcal{R}}^{twist} \left(\textbf{M},\textbf{M'},\tau\right)\right\}.
\end{displaymath}
Consider the set $U_{0} \subset U$ that forms a basis for \ $\mathbb{R}[U]\big/\langle{\mathcal{R}^{orb}, \mathcal{R} \rangle}$. Then, there exist choices for $\mathcal{R}$ and $\mathcal{R}^{orb}$, which satisfy the following. The map 

\begin{displaymath}
I: W \rightarrow \mathbb{N}^{h_{\mathcal{R}^{orb},\mathcal{R}}}, \hspace{1cm} \tilde{W} \rightarrow \left(i\left(\tilde{W},w\right)\right)_{w \in U_{0}}
\end{displaymath}
is finite-to-one, where

\begin{displaymath}
i\left(\tilde{W},w\right):= \underset{\tilde{w} \in \tilde{T}}{\sum}i\left(\tilde{w},w\right) + \underset{\tilde{\alpha} \in \tilde{\textbf{S}}^c_g}{\sum}m\left(\tilde{\alpha},\tilde{\textbf{M}}\right)i\left(\tilde{\alpha},w\right).
\end{displaymath}

\end{lemma}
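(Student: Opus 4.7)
The plan is to choose $\mathcal{R}^{orb}$ to be a finite generating set for the $\mathbb{R}$-linear relations among the holonomy vectors of $U$ coming from the period-coordinate equations defining $V$, as furnished by Corollary \ref{corollary3.4.1}, and to choose $\mathcal{R}$ to be a finite generating set for the topological intersection relations used in the corresponding finiteness argument in \cite{eskin2019counting} for strata. With these choices, $\mathbb{R}[U]/\langle \mathcal{R}^{orb}, \mathcal{R}\rangle$ is finite-dimensional, so $U_0$ is a finite basis and the exponent $h_{\mathcal{R}^{orb},\mathcal{R}}$ is well-defined.

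The key step is that both families of relations are respected, up to an additive error of $O(1)$, by intersection numbers against the arcs of $\tilde{T}$ and the short curves of $\tilde{\textbf{S}}^c_g$. For $\mathcal{R}^{orb}$ this is precisely the content of Theorems \ref{theorem4.5} and \ref{theorem4.6}: if $w^i = \sum_j c^j w^j$ is an orbit relation, then for every arc $\tilde{w} \in \tilde{T}$ and every $\tilde{\alpha} \in \tilde{\textbf{S}}^c_g$,
\begin{displaymath}
i(w^i,\tilde{w}) = \sum_j c^j\, i(w^j,\tilde{w}) + O(1), \qquad i(w^i,\tilde{\alpha}) = \sum_j c^j\, i(w^j,\tilde{\alpha}) + O(1).
\end{displaymath}
For $\mathcal{R}$ the same bounds hold by construction, since they are built into item (3) of the definition of $M^{twist}_{\mathcal{R}}(\textbf{M},\textbf{M}',\tau)$. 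Consequently, the tuple $I(\tilde{W}) = (i(\tilde{W},w))_{w\in U_0}$ determines the full vector $(i(\tilde{W},w))_{w\in U}$ up to a uniformly bounded additive error, so only finitely many integer vectors indexed by $U$ are compatible with a given image under $I$.

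It remains to argue that knowing the intersection numbers $i(\tilde{W},w)$ for all $w\in U$ up to $O(1)$ restricts $\tilde{\textbf{M}} = \{\tilde{\textbf{S}}_g, \{\tilde{E}(\tilde{\alpha})\}, \tilde{T}\}$, and hence $\tilde{W}$ itself, to a finite set. This is the standard reconstruction argument from \cite{eskin2019counting}: the bounds in item (4) of the definition of $M^{twist}_{\mathcal{R}}$ give an a priori upper bound on the length of each arc of $\tilde{T}$ in the reference structure carried by $\textbf{M}$, and for arcs of bounded length the intersection numbers with a filling subfamily of $U$ determine the arc up to finite ambiguity by Dehn--Thurston-type coordinates. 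The weights $m(\tilde{\alpha}, \tilde{\textbf{M}})$ are then recovered by subtracting the contribution of $\tilde{T}$ from $i(\tilde{W},w)$ and appealing to condition (2) of membership in $M^{twist}_{\mathcal{R}}(\textbf{M},\textbf{M}',\tau)$, which pins down the extremal lengths $\tilde{E}(\tilde{\alpha})$.

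The main obstacle is securing finiteness of $\mathcal{R}^{orb}$: a priori, as $(X,\omega^2)$ ranges over the rank-1 orbit closure $V$, the holonomy vectors in a chart $V_{\Gamma,\alpha}$ could satisfy an uncountable family of linear relations, which would destroy uniformity of the additive $O(1)$ constants appearing in Theorems \ref{theorem4.5} and \ref{theorem4.6}. The Mirzakhani--Wright Cylinder Finiteness Theorem (Theorem \ref{theorem3.4}), combined with the finiteness of cylinder diagrams in a stratum, cuts this down to a finite universal list, as recorded in Corollary \ref{corollary3.4.1}. Once this finiteness is in hand, the additive errors in the preceding step are uniform in $(X,\omega^2)\in V$, and the finite-to-one conclusion for $I$ follows.
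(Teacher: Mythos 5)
Your proposal is correct and follows essentially the same route as the paper: expand each $u\in U$ in the basis $U_0$ modulo $\langle\mathcal{R}^{orb},\mathcal{R}\rangle$, use the finiteness of orbit relations from Corollary~\ref{corollary3.4.1} (plus the finitely many stratum relations from \cite{eskin2019counting}) to get uniform boundedness of the expansion coefficients, conclude that the restricted intersection vector $I(\tilde{W})$ determines the full intersection vector up to bounded ambiguity, and then invoke the reconstruction step from \cite{eskin2019counting} to finish. One minor point of emphasis: the $O(1)$-compatibility of intersection numbers with the relations in $\mathcal{R}^{orb}$ is already built into item (3) of the definition of $M^{twist}_{\mathcal{R}}(\textbf{M},\textbf{M}',\tau)$, rather than being a consequence you derive from Theorems~\ref{theorem4.5} and~\ref{theorem4.6} inside this proof; those theorems are what make the definition non-vacuous, but the proof itself only needs the definitional bound together with the finiteness from Corollary~\ref{corollary3.4.1}.
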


\begin{proof}
Let $1\cdot u \in \mathbb{R}[U]$. Then, there exist constants $q_w$ and $c_{R}$ so that:

\begin{equation}
\label{equation4.13}
1\cdot u = \underset{w\in U_0}{\sum}q_{w}w \ + \underset{R \in \langle{\mathcal{R}^{orb},\mathcal{R}\rangle}}{\sum}c_R R.
\end{equation}
By Proposition $\ref{propposition3.1}$, for a choice of $X$ with topological type $S_g$ and

\begin{displaymath}
\beta:= \left\{\alpha_1,...,\alpha_{r_0},\beta_1,...,\beta_k\right\}
\end{displaymath}
is an integral basis for $H_1(X,Z(\omega),\mathbb{Z})$ with $\left\{\alpha_i\right\}$ maximally independent and horizontal, and $\left\{\beta_i\right\}$ crossing saddle connections. The arcs in $U$ generate $H_1(X,Z(\omega),\mathbb{R})$, so we know that any $\mathbb{R}$-linear combination involving $\left\{\alpha_i\right\}$ and $\left\{\beta_i\right\}$ can be written as an $\mathbb{R}$-linear expression involving $w \in U$. In particular, the right-hand term of Equation $\ref{equation4.13}$ can be written with coefficients chosen by this process. Choose one of the linear relations from Corollary $\ref{corollary4.6.1}$ and  the linear relations in Lemma 4.7 from $\cite{eskin2019counting}$. Then, we know that the $c_R$ can be chosen to be uniformly bounded. Therefore, there are only a finite number of possible values $i(1\cdot u,\tilde{w})$ can take. 
\end{proof}
Lemma $\ref{lemma4.13}$ is "Step 1" of Lemma 5.8 in $\cite{eskin2019counting}$. "Step 2" does not require modification to adapt to our setting, and we record it here as a lemma.

\begin{lemma}
\label{lemma4.14}
$|W| \overset{*}{\prec} |I(W)| \leq e^{|U_0|\tau}\underset{\alpha \in \textbf{S}^c_g}{\prod}\frac{1}{\sqrt{E(\alpha})}$.
\end{lemma}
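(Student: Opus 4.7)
The plan is to follow Step 2 of the proof of Lemma 5.8 in $\cite{eskin2019counting}$, which carries over verbatim to the present setting. I would first dispose of the inequality $|W| \overset{*}{\prec} |I(W)|$ by invoking Lemma $\ref{lemma4.13}$: since $I$ is finite-to-one with a fiber bound depending only on $g$ and the finite set of linear relations furnished by Corollary $\ref{corollary4.6.1}$, the cardinality $|W|$ is dominated by $|I(W)|$ up to a uniform multiplicative constant.

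The substance of the lemma is the second inequality, which I would prove by bounding $|I(W)|$ coordinate-by-coordinate. Since $I(W) \subset \mathbb{N}^{|U_0|}$, the task reduces to estimating, for each $w \in U_0$, the number of admissible values of the nonnegative integer
\begin{displaymath}
i(\tilde{W}, w) = \underset{\tilde{w} \in \tilde{T}}{\sum} i(\tilde{w}, w) + \underset{\tilde{\alpha} \in \tilde{\textbf{S}}^c_g}{\sum} m(\tilde{\alpha}, \tilde{\textbf{M}})\, i(\tilde{\alpha}, w).
\end{displaymath}
The first sum is controlled by $i(w, \tilde{w}) \overset{*}{\prec} e^\tau$ together with the uniform bound on $|\tilde{T}|$, giving a contribution of order $e^\tau$. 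For the second sum, the diagonal twist bound $\mathrm{twist}_{\tilde{\alpha}}(\textbf{M}, \tilde{\textbf{M}})\sqrt{E(\tilde{\alpha})\tilde{E}(\tilde{\alpha})} \overset{*}{\prec} e^\tau$ (valid in the case $\alpha = \tilde{\alpha}$) yields $m(\tilde{\alpha}, \tilde{\textbf{M}}) \overset{*}{\prec} e^\tau / \sqrt{E(\tilde{\alpha})}$, producing an additional factor of $e^\tau / \sqrt{E(\alpha)}$ precisely when $w$ is drawn from the sub-basis of $U_0$ consisting of short-curve crossings $\beta_\alpha$.

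Taking the product over all $w \in U_0$ then produces $e^{|U_0|\tau}$ from the $e^\tau$ contributions and $\prod_{\alpha \in \textbf{S}^c_g} 1/\sqrt{E(\alpha)}$ from the short-curve contributions, matching the asserted bound. The main bookkeeping obstacle is ensuring that each short curve $\alpha$ contributes exactly one factor of $1/\sqrt{E(\alpha)}$ rather than one per associated basis element; this is arranged by choosing $U_0$ to contain the family $\{\beta_\alpha\}_{\alpha \in \textbf{S}^c_g}$ and extending by sufficiently many triangulation edges to form a basis of $\mathbb{R}[U]/\langle \mathcal{R}^{orb}, \mathcal{R}\rangle$, which is possible because the $\beta_\alpha$ remain linearly independent modulo the orbit relations. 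Since Lemma $\ref{lemma4.13}$ has already absorbed the new relations coming from $\mathcal{R}^{orb}$ into the finite-to-one statement, no further modification of the Eskin--Mirzakhani--Rafi argument is needed beyond this choice of basis.
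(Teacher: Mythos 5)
The paper does not actually write out a proof of this lemma—it declares that ``Step 2'' of Lemma 5.8 in Eskin--Mirzakhani--Rafi carries over verbatim and records the result. Your reconstruction of that Step 2 is structurally correct: disposing of $|W|\overset{*}{\prec}|I(W)|$ via the finite-to-one statement of Lemma \ref{lemma4.13}, then bounding $|I(W)|$ coordinate-by-coordinate using the intersection and twist inequalities from condition (4) of the definition of $M^{twist}_{\mathcal{R}}$. The observation that the only genuinely new ingredient relative to \cite{eskin2019counting} is that $\mathcal{R}^{orb}$ has been adjoined to the relations defining $U_0$, so the exponent becomes $|U_0|\tau$ with a smaller $|U_0|$ but the mechanics are unchanged, is exactly the point the paper is making by citing EMR.

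One spot is shakier than it needs to be: you assert that the $\beta_\alpha$ ``remain linearly independent modulo the orbit relations'' so that $U_0$ can be chosen to contain every $\beta_\alpha$. That claim is not justified, and in a rank-$1$ orbit closure the relations in $\mathcal{R}^{orb}$ may well involve the crossing arcs, so it need not hold. Fortunately it is also unnecessary. Each $w\in U_0$ lies in $U = T\cup\{\beta_\alpha\}_{\alpha\in\textbf{S}^c_g}$; coordinates $w\in T$ have range $O(e^\tau)$, while a coordinate $w=\beta_\alpha$ has range $O(e^\tau/\sqrt{E(\alpha)})$, and since each $\beta_\alpha$ can appear at most once in $U_0$ there is at most one factor of $1/\sqrt{E(\alpha)}$ per short curve. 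If some $\beta_\alpha\notin U_0$ you simply omit that factor; since $1/\sqrt{E(\alpha)}\geq 1$ for $\alpha$ short, the product $\prod_{\alpha\in\textbf{S}^c_g}1/\sqrt{E(\alpha)}$ still dominates, and the stated upper bound holds for any choice of basis $U_0$. Replacing the independence claim with this monotonicity observation closes the gap and in fact simplifies the argument.
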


Combining Lemmas $\ref{lemma4.13}$ and $\ref{lemma4.14}$, we have the following fundamental lemma and its corollary.

\begin{lemma}
\label{lemma4.15}
Let ${h}_{\mathcal{R}^{orb},\mathcal{R}} = \mathrm{dim}_{\mathbb{R}}\left(\mathbb{R}[U]\big/\langle{\mathcal{R}^{orb}, \mathcal{R} \rangle}\right).$ Then,

\begin{displaymath}
|M_{\mathcal{R}}^{twist}\left(\textbf{M},\textbf{M}',\tau\right)| \overset{*}{\prec} e^{h_{\mathcal{R}^{orb},\mathcal{R}}\tau}\underset{\alpha \in \textbf{S}^c_g}{\prod}\frac{1}{\sqrt{E(\alpha})} \underset{\alpha'\in {\textbf{S}'}^c_g}{\prod} \frac{1}{\sqrt{E'(\alpha')}}.
\end{displaymath}
\end{lemma}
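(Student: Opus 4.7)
The plan is to decompose the count of markings in $M^{twist}_{\mathcal{R}}(\textbf{M},\textbf{M}',\tau)$ via the forgetful map $\Psi: \tilde{\textbf{M}} \mapsto \tilde{W}$ into the set $W$ of weighted graphs studied in Lemma \ref{lemma4.13}. By Lemma \ref{lemma4.14}, the image already satisfies
\[
|W| \overset{*}{\prec} e^{h_{\mathcal{R}^{orb},\mathcal{R}}\tau}\prod_{\alpha \in \textbf{S}^c_g}\frac{1}{\sqrt{E(\alpha)}},
\]
so the lemma will follow as soon as we bound each fiber $\Psi^{-1}(\tilde{W})$ by a uniform constant multiple of $\prod_{\alpha' \in {\textbf{S}'}^c_g} \frac{1}{\sqrt{E'(\alpha')}}$. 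The entire proof then reduces to understanding how much twisting information is lost when passing from $\tilde{\textbf{M}}$ to $\tilde{W}$.

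To carry out the fiber bound, I fix $\tilde{W} \in W$ and note that it records the triangulation $\tilde{T}$, the short-curve set $\tilde{\textbf{S}}^c_g$, and the integer weight $m(\tilde{\alpha},\tilde{\textbf{M}}) = \lfloor \mathrm{twist}_{\tilde{\alpha}}(\textbf{M},\tilde{\textbf{M}})\sqrt{\tilde{E}(\tilde{\alpha})} \rfloor$ at each short curve. Condition (2) in the definition of $M^{twist}_{\mathcal{R}}(\textbf{M},\textbf{M}',\tau)$ pins down $\tilde{E}(\tilde{\alpha}) = E'(\alpha')$, so any two markings in the same fiber of $\Psi$ differ only in the actual twisting values $\mathrm{twist}_{\tilde{\alpha}}(\textbf{M},\tilde{\textbf{M}})$. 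The floor condition constrains each such twist to an interval of length $1/\sqrt{E'(\alpha')}$; since the twisting data for markings is quantized at unit scale (reflecting the Dehn-twist action around short curves, as recorded by condition (4)), the number of distinct markings whose twist falls inside this interval is bounded above by $O(1/\sqrt{E'(\alpha')})$ when $E'(\alpha') < 1$ and by $O(1)$ otherwise, both absorbed by the implicit multiplicative constant. Taking the product over $\tilde{\alpha} \in \tilde{\textbf{S}}^c_g \cong {\textbf{S}'}^c_g$ yields the desired fiber bound, and multiplying by the estimate on $|W|$ gives the conclusion.

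The step I expect to be the main obstacle is the verification of the correct quantization scale in the twist direction, namely that two markings whose twists lie in the same interval $\bigl[m/\sqrt{E'(\alpha')},(m+1)/\sqrt{E'(\alpha')}\bigr)$ are genuinely identified up to the error absorbed into the $O(1)$ terms of conditions (4)--(6) of the definition of $M^{twist}_{\mathcal{R}}$. This is the analogue of the corresponding step in Lemma~5.8 of \cite{eskin2019counting}; the job is to confirm that the additional orbit-closure relations collected in $\mathcal{R}^{orb}$ do not disturb the discretization. This should follow from Corollary \ref{corollary4.6.1} together with the uniform additive error already built into Theorems \ref{theorem4.5} and \ref{theorem4.6}, so that the reduction to a basis $U_0$ used in Lemma \ref{lemma4.13} is stable under the lattice-point count that controls the fiber.
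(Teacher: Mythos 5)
Your proposal is essentially correct and reflects the intended argument. The paper's ``proof'' consists of the single sentence ``Combining Lemmas \ref{lemma4.13} and \ref{lemma4.14},'' but those two lemmas by themselves only give $|W| \overset{*}{\prec} e^{|U_0|\tau}\prod_{\alpha \in \textbf{S}^c_g}\frac{1}{\sqrt{E(\alpha)}}$, a bound with a single product. To pass from $|W|$ to $|M^{twist}_{\mathcal{R}}(\textbf{M},\textbf{M}',\tau)|$ and to recover the second product $\prod_{\alpha'\in {\textbf{S}'}^c_g}\frac{1}{\sqrt{E'(\alpha')}}$, one must control the fiber of the map $\tilde{\textbf{M}}\mapsto \tilde{W}$, which is exactly the step you make explicit. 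This is the same structure as in Lemma~5.8 of \cite{eskin2019counting}, which the paper is adapting, so you have correctly identified the tacit content of the one-line proof.

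Two small cautions. First, you write that Lemma~\ref{lemma4.14} gives $e^{h_{\mathcal{R}^{orb},\mathcal{R}}\tau}$ in the exponent; the stated exponent is $|U_0|\tau$, and the identification $|U_0| = h_{\mathcal{R}^{orb},\mathcal{R}}$ relies on $U_0$ being a basis of $\mathbb{R}[U]/\langle\mathcal{R}^{orb},\mathcal{R}\rangle$, which is fine but worth saying explicitly since this is exactly the point at which the new relations $\mathcal{R}^{orb}$ coming from the orbit closure enter the count. Second, your fiber bound is stated informally: you need that, once $\tilde{T}$, $\tilde{\textbf{S}}^c_g$, and the lengths $\tilde{E}(\tilde{\alpha}) = E'(\alpha')$ are fixed by the weighted graph together with condition~(2), the remaining freedom in $\tilde{\textbf{M}}$ is precisely a choice of mapping class acting on $\textbf{M}'$, and that composing with a Dehn twist about $\tilde{\alpha}$ shifts $\mathrm{twist}_{\tilde{\alpha}}(\textbf{M},\tilde{\textbf{M}})$ by $1$ up to the $O(1)$ error of Equation~\ref{equation2.3}. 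That is what justifies the phrase ``quantized at unit scale'' and yields $O\bigl(1/\sqrt{E'(\alpha')}\bigr)$ preimages per curve. You have named this as the main obstacle, and you are right that it is the crux; in the end it does follow from the definition of markings with twisting data together with Corollary~\ref{corollary4.6.1}, which guarantees the added relations $\mathcal{R}^{orb}$ change the intersection data only by bounded error, so they do not interfere with the lattice-point count.
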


By repeated applications of Lemma $\ref{lemma4.4}$ (see $\cite{eskin2019counting}$, section 4.1), we have

\begin{corollary}
\label{corollary4.14.1}
\begin{displaymath}
|\mathcal{B}_j(V,X,Y,\tau)| \overset{*}{\prec}\tau^{|S_X|+|S_Y|}e^{(h-j)\tau}\mathcal{G}(X)\mathcal{G}(Y).
\end{displaymath}
\end{corollary}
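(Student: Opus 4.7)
The plan is to reduce the geometric counting problem to the marking count provided by Lemma \ref{lemma4.15}, closely following the reduction in Section 4.1 of \cite{eskin2019counting}. Given $Z \in \mathcal{B}_j(V,X,Y,\tau)$, first invoke Lemma \ref{lemma4.3} to produce $(\omega^2_a,\tau)$- and $(\omega^2_b,\tau)$-regular triangulations $T_a, T_b$ at the endpoints of the associated Teichm\"uller segment. Use these to build markings with twisting data $\textbf{M} = \textbf{M}(X)$ and $\tilde{\textbf{M}} = \tilde{\textbf{M}}(Z)$ whose underlying combinatorial data records the short curves, their extremal lengths, and the partial triangulation. Since $Z \in \mathrm{Mod}(S_g)\cdot Y$, arrange that $\tilde{\textbf{M}}$ is a homeomorphic image of a fixed $\textbf{M}' = \textbf{M}(Y)$, so that $\tilde{\textbf{M}} \in M_{\mathcal{R}}^{twist}(\textbf{M},\textbf{M}',\tau)$ for appropriate relation sets.

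Next, assemble the relation sets. Place in $\mathcal{R}^{orb}$ the intersection relations furnished by Corollary \ref{corollary4.6.1} and its analogue for intersections with short curves; these encode, at the level of intersection numbers, the linear equations cutting out $V$ in period coordinates, and by Theorem \ref{theorem3.4} they come with universally bounded coefficients. Place in $\mathcal{R}$ the $j$ linear relations coming from the $j$ common homologically independent saddle connections shared by $T_a$ and $T_b$; these literally identify the corresponding edges, so they are independent modulo $\mathcal{R}^{orb}$. A dimension count, using the rank-one hypothesis on $V$ together with the Eskin--Mirzakhani--Mohammadi and Wright theorems to pin down the ambient linear structure, yields $h_{\mathcal{R}^{orb},\mathcal{R}} = h - j$. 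Lemma \ref{lemma4.15} then gives
\begin{displaymath}
|M_{\mathcal{R}}^{twist}(\textbf{M},\textbf{M}',\tau)| \overset{*}{\prec} e^{(h-j)\tau}\prod_{\alpha \in \textbf{S}^c_{gX}}\frac{1}{\sqrt{E(\alpha)}}\prod_{\alpha'\in \textbf{S}^c_{gY}}\frac{1}{\sqrt{E'(\alpha')}}.
\end{displaymath}

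Finally, recognize that each short-curve product is dominated by $\mathcal{G}(X)$, respectively $\mathcal{G}(Y)$, by the definition in Equation (\ref{equation4.13}). The missing $\tau^{|S_X|+|S_Y|}$ factor arises from the multiplicity of the map $Z \mapsto \tilde{\textbf{M}}(Z)$: around each short curve the Dehn-twist parameter is only constrained up to an $O(\tau)$ range by the twist-bound conditions appearing in the definition of $M_{\mathcal{R}}^{twist}$, so a single marking corresponds to up to $\prod_{\alpha \in \textbf{S}_{gX}}\tau \cdot \prod_{\alpha' \in \textbf{S}_{gY}}\tau$ points of $\mathcal{B}_j(V,X,Y,\tau)$. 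This is precisely the standard passage from marking count to geometric count, executed via repeated application of Lemma \ref{lemma4.4} as in \cite{eskin2019counting}.

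The main obstacle I anticipate is verifying that $h_{\mathcal{R}^{orb},\mathcal{R}} = h-j$. Corollary \ref{corollary4.6.1} furnishes a finite system of orbit relations, but one must check that, together with a well-chosen basis $U_0$ of $\mathbb{R}[U]/\langle \mathcal{R}^{orb},\mathcal{R}\rangle$, these relations cut the ambient period-coordinate dimension down precisely to the affine-invariant dimension $h$, and that the $j$ shared homologically independent saddle connections descend to $j$ further independent relations. The rank-one hypothesis on $V$ is essential here: it ensures complete periodicity, without which the cylinder-decomposition identities underlying Theorems \ref{theorem4.5} and \ref{theorem4.6} (and hence Corollary \ref{corollary4.6.1}) would not be available.
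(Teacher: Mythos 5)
Your proposal follows the same route the paper takes: reduce $\mathcal{B}_j(V,X,Y,\tau)$ to the marking count of Lemma \ref{lemma4.15} via the reduction of Section 4.1 of \cite{eskin2019counting}, feed in $\mathcal{R}^{orb}$ from Corollary \ref{corollary4.6.1} and $\mathcal{R}$ from the $j$ shared independent saddle connections, use $h_{\mathcal{R}^{orb},\mathcal{R}} = h-j$, and recover the $\tau^{|S_X|+|S_Y|}$ factor by repeated application of Lemma \ref{lemma4.4}. The paper's own proof is a one-line pointer to exactly this chain, with the dimension identity recorded (without proof) as the proposition immediately following the corollary, so your elaboration matches the intended argument and correctly identifies that dimension count as the point requiring care.
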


$\mathcal{R}^{orb}$ was chosen so that the following holds.

\begin{prop}
$\mathrm{dim}_{\mathbb{R}} \left(\mathbb{R}[U]\big/\langle{\mathcal{R}^{orb},\mathcal{R}\rangle}\right) = h-j$.
\end{prop}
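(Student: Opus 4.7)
\emph{Plan.} The approach is a dimension count carried out in two stages: first I would show $\dim_{\mathbb{R}} \mathbb{R}[U]/\langle \mathcal{R}^{orb}\rangle = h$, then I would show that adjoining $\mathcal{R}$ decreases the dimension by exactly $j$ via a transversality argument. The combination gives $\dim \mathbb{R}[U]/\langle \mathcal{R}^{orb}, \mathcal{R}\rangle = h - j$.

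\emph{Stage 1.} By its construction in the proof of Lemma \ref{lemma4.13} (via Corollaries \ref{corollary3.4.1} and \ref{corollary4.6.1}), the set $\mathcal{R}^{orb}$ records all $\mathbb{R}$-linear relations $\sum a_i w^i = 0$ among the holonomy vectors of edges in $U$ that are preserved along the orbit closure $V$. Viewed on the homology side, these are exactly the combinations with $\sum a_i [w^i] \in (S^{\mathbb{R}})^\perp$, where $S^{\mathbb{R}} = S \cap H^1(X, Z(\omega), \mathbb{R})$ is the real part of the tangent space to $V$ at $\omega^2$. Thus $\mathcal{R}^{orb}$ is precisely the kernel of the evaluation map
\[
\mathrm{ev} : \mathbb{R}[U] \to (S^{\mathbb{R}})^*, \quad \sum a_i w^i \mapsto \Big(\eta \mapsto \sum a_i \langle \eta, [w^i]\rangle\Big).
\]
Since $U$ contains a partial triangulation together with the arcs $\beta_{\alpha}$ crossing the short cylinders, the classes $\{[w^i]\}_{w^i \in U}$ generate $H_1(X, Z(\omega), \mathbb{R})$, so $\mathrm{ev}$ is surjective onto $(S^{\mathbb{R}})^*$. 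Consequently $\dim \mathbb{R}[U]/\langle \mathcal{R}^{orb}\rangle = \dim (S^{\mathbb{R}})^* = h$.

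\emph{Stage 2.} By the proposition immediately preceding Theorem \ref{theorem4.9}, $\dim_{\mathbb{R}} \langle \mathcal{R}\rangle = j$, with generators identifiable with the $j$ homologically independent shared edges $\{w_b^i\} \subset T_a \cap T_b$. To conclude it suffices to show $\mathrm{ev}$ is injective on $\langle \mathcal{R}\rangle$. Under $\mathrm{ev}$ the generators map to the functionals $\eta \mapsto \langle \eta, [w_b^i]\rangle$ on $S^{\mathbb{R}}$; these are independent because the $[w_b^i]$ are independent in $H_1(X, Z(\omega), \mathbb{R})$ and $S^{\mathbb{R}}$ pairs non-degenerately with such classes, which follows from the non-degeneracy of the intersection form on $p(S^{\mathbb{R}})$ recorded in Section \ref{Section2} combined with the cylinder-structure constraints from Theorem \ref{theorem3.4}. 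Transversality $\langle \mathcal{R}\rangle \cap \langle \mathcal{R}^{orb}\rangle = 0$ then follows, yielding the claimed dimension $h - j$.

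\emph{Main obstacle.} The delicate step is the transversality in Stage 2: homological independence of the $\{w_b^i\}$ in $H_1(X, Z(\omega), \mathbb{R})$ does not a priori survive projection to $(S^{\mathbb{R}})^*$, and in the rank-$1$ case the absolute image $p(S^{\mathbb{R}})$ is only $2$-dimensional in $H^1(X, \mathbb{R})$. What makes the argument go through is that the $w_b^i$ are saddle connections of structures in $V$, so their homology classes pair richly with the relative cohomology inside $S^{\mathbb{R}}$; combined with the complete periodicity of $V$ (Wright's theorem) and the cylinder finiteness of Theorem \ref{theorem3.4}, this rules out accidental cancellations among the shared edges. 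Stage 1, by contrast, is essentially a bookkeeping unpacking of the construction of $\mathcal{R}^{orb}$ inside the proof of Lemma \ref{lemma4.13}.
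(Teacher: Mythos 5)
The paper does not actually prove this proposition: it is stated immediately after the sentence ``$\mathcal{R}^{orb}$ was chosen so that the following holds,'' i.e.\ the paper treats the dimension count as being enforced by the \emph{choice} of the generating sets $\mathcal{R}^{orb}$ and $\mathcal{R}$ made inside the proof of Lemma \ref{lemma4.13}, rather than as a theorem about any canonical collection of orbit relations. Your attempt to give a genuine derivation is therefore more ambitious than the paper itself, but it has a real gap.

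Stage 1 is plausible only under the additional assumption that $\mathcal{R}^{orb}$ is the \emph{entire} kernel of the evaluation map $\mathrm{ev}:\mathbb{R}[U]\to(S^{\mathbb{R}})^{*}$; nothing in the paper forces that identification, and the phrase ``was chosen'' strongly suggests $\mathcal{R}^{orb}$ is a proper subset selected precisely to make the count come out. More seriously, Stage 2 does not close. You correctly flag the transversality $\langle\mathcal{R}\rangle\cap\langle\mathcal{R}^{orb}\rangle=0$ as the delicate step, but the justification you give -- that the functionals $\eta\mapsto\langle\eta,[w_b^i]\rangle$ are independent on $S^{\mathbb{R}}$ because the $[w_b^i]$ are independent in $H_1(X,Z(\omega),\mathbb{R})$ and because ``the intersection form on $p(S^{\mathbb{R}})$ is non-degenerate'' -- does not establish what you need. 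Non-degeneracy of the symplectic pairing restricted to the \emph{two-dimensional} image $p(S^{\mathbb{R}})\subset H^1(X,\mathbb{R})$ is a statement internal to absolute cohomology; it says nothing about whether the duality pairing between the $h$-dimensional space $S^{\mathbb{R}}\subset H^1(X,Z(\omega),\mathbb{R})$ and the span of $j$ specific relative cycles $[w_b^i]$ is non-degenerate. Indeed, since $h$ is typically much smaller than $\dim H_1(X,Z(\omega),\mathbb{R})=2g+k-1$, homologically independent classes $[w_b^i]$ can certainly restrict to dependent functionals on $S^{\mathbb{R}}$; in particular, rel directions in $S^{\mathbb{R}}$ (which exist for most rank-1 orbit closures) annihilate any $[w_b^i]$ that happens to be an absolute cycle. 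The appeal to complete periodicity and Theorem \ref{theorem3.4} is gestured at but not turned into an argument. As written, Stage 2 is a restatement of the desired conclusion rather than a proof of it.

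\end{document}
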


\begin{section}{Random walks}
\label{Section5}

In this section, we sketch the integration of the results of Section $\ref{Section4}$ into the random walk argument of $\cite{eskin2019counting}$. The proofs largely go through verbatim - we provide an overall outline of the argument, and proofs and comments where necessary. \\

We define $\mathcal{Q}_{j,\epsilon}\left(p_1,...,p_k\right)$ to be the set of  $(X,\omega^2)  \in \Omega_1\mathcal{T}_g(S)\left(p_1,...,p_k\right)$ such that $(X,\omega^2)$ has $\geq j$ homologically  independent saddle connections. Throughout this section, we fix a stratum, so we will not specify $\left(p_1,...,p_k\right)$.

\begin{definition}
Let $p:\mathcal{T}_g(S) \rightarrow \mathcal{M}_g(S)$ denote the natural projection. Let $\mathcal{N} \subset \mathcal{M}_g(S)$ be a net in $\mathcal{M}_g(S)$ where the distance between any two net points is always greater than a fixed $c > 0$ and any point in $\mathcal{M}_g(S)$ always comes within a distance $2c > 0$ of the net. We define

\begin{enumerate}
    \item $\mathcal{B}(\mathcal{Q}_{j,\varepsilon}, X,\tau) = \mathcal{B}(V,X,\tau) \cap \mathcal{Q}_{j,\epsilon}$,
    \item $\mathcal{N}(X,\tau) = p\left(\mathcal{B}(X,\tau)\right) \cap \mathcal{N}$,
    \item $\tilde{\mathcal{N}} = p^{-1}(\mathcal{N})$,
    \item $\tilde{\mathcal{N}}(V,X,\tau) = \mathcal{B}(V,X,\tau) \cap \tilde{\mathcal{N}}$,
    \item $\tilde{\mathcal{N}}(V,X,Y,\tau) = \mathcal{B}(V,X,Y,\tau) \cap \tilde{\mathcal{N}}$,
    \item $\tilde{\mathcal{N}}(\mathcal{Q}_{j,\epsilon},X,\tau) = \mathcal{B}(\mathcal{Q}_{j,\epsilon}, X, \tau) \cap \tilde{\mathcal{N}},$
    \item $\tilde{\mathcal{N}}(\mathcal{Q}_{j,\epsilon}, X, Y, \tau) = \mathcal{B}(\mathcal{Q}_{j,\epsilon}, X, Y,\tau) \cap \tilde{\mathcal{N}}$.
\end{enumerate}
\end{definition}

Using Lemma $\ref{lemma4.4}$ and the choice of $\varepsilon_1(\tau)$ from Lemma $\ref{lemma4.4}$, we have for any $\tau > 0$, an $\varepsilon_2(\tau) < \varepsilon_1(\tau)$, so that for all $X,Y \in \mathcal{T}_g(S)$:

\begin{equation}
\label{equation5.1}
    \mathcal{B}(\mathcal{Q}_{j,\epsilon},X,\tau) \subset \mathcal{B}_j(V,X,\tau)
\end{equation}
and

\begin{equation}
    \label{equation5.2}
    \mathcal{B}(\mathcal{Q}_{j,\epsilon},X,Y,\tau) \subset \mathcal{B}_j(V,X,\tau).
\end{equation}

\begin{lemma}
\label{Lemma5.2}
There exists a constant $c_0 > 0$ such that for any $c>c_0$, and net $\mathcal{N}$ as above, we have

\begin{displaymath}
\Big|p\left(B(X,\tau)\right) \cap \mathcal{N}\Big| \overset{*}{\prec} \tau^{3g-3}.
\end{displaymath}
\end{lemma}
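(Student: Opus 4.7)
The plan is to reduce the problem to counting $c$-separated net points inside a Teichm{\"u}ller ball in $\mathcal{M}_g(S)$. Since $p$ is distance non-increasing with respect to the Teichm{\"u}ller metric, and each $Z \in \mathcal{B}(V,X,\tau)$ lies within Teichm{\"u}ller distance $\tau + 2r_0$ of $X$ by construction, the image $p(\mathcal{B}(V,X,\tau))$ is contained in the Teichm{\"u}ller ball of radius $R := \tau + 2r_0$ around $p(X)$ in $\mathcal{M}_g(S)$. The problem therefore reduces to showing that any such ball contains at most $\overset{*}{\prec} R^{3g-3}$ net points.

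I would attack this count via the thick--thin decomposition of $\mathcal{M}_g(S)$ combined with Minsky's product region theorem (Theorem \ref{theorem2.3}). The $\varepsilon_0$-thick part of $\mathcal{M}_g(S)$ is compact by Mumford's theorem, so for $c_0$ larger than twice the injectivity radius of the thick part, the contribution to the net count from this region is uniformly $O(1)$. For each multicurve $\mathcal{A} = \{\alpha_1, \ldots, \alpha_k\}$ of disjoint simple closed curves with $k \leq 3g-3$, Theorem \ref{theorem2.3} identifies the Teichm{\"u}ller metric on the thin region $\mathcal{T}^{\varepsilon_0}(\mathcal{A})$, up to an additive $O(1)$ error, with the sup-metric on $\mathbb{H}^k \times \mathcal{T}_g(S \setminus \mathcal{A})$ via the map $\varphi_\mathcal{A}$. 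Descending to moduli space, each $\mathbb{H}$ factor is quotiented by the Dehn twist $T_{\alpha_i}$ (which acts as a horocyclic translation), producing a cusp factor, while the remaining factor is a lower-complexity moduli space that can be controlled by induction on topological complexity, with base case the compact $\varepsilon_0$-thick piece.

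Assembling these contributions, in each thin region with $k$ short curves the sup-metric product structure yields a polynomial bound of the form $O(R^k)$: each cusp factor contributes a polynomial in $R$ from the bounded variation of the twist coordinate modulo $1$ along the hyperbolic geodesic flow, and the compact factor contributes $O(1)$. Summing over the finite collection of multicurves $\mathcal{A}$ with $|\mathcal{A}| \leq 3g-3$ yields the desired bound $\overset{*}{\prec} R^{3g-3} \overset{*}{\prec} \tau^{3g-3}$, the extremal exponent being attained at a pants decomposition.

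The main obstacle I expect is verifying the polynomial per-cusp contribution. Although the hyperbolic area of a ball of radius $R$ in a cusp is exponential in $R$, the $c$-separated net must inherit only a polynomial count, because distinct net points in the ball have to correspond to distinct twist values modulo $1$, and the geodesic flow advances the twist coordinate at a controlled rate. Ensuring this cleanly requires choosing $c_0$ larger than explicit constants coming from Minsky's quasi-isometry in Theorem \ref{theorem2.3}, from the injectivity radius of the thick part, and from the standard hyperbolic cusp counting estimate; this calibration is routine but must be carried out uniformly across the finitely many multicurves $\mathcal{A}$.
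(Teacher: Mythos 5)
The paper does not supply a proof of this lemma; it is imported from the Eskin--Mirzakhani--Rafi framework, tracing back to Eskin--Mirzakhani's counting paper, so there is nothing here to compare against line by line. Judged on its own terms, your outline is the standard and correct one: reduce to a ball of radius $O(\tau)$ in $\mathcal{M}_g(S)$; decompose into the $\varepsilon_0$-thick part (compact by Mumford, hence $O(1)$ net points) and thin regions indexed by topological types of multicurves $\mathcal{A}$ with $|\mathcal{A}| = k \leq 3g-3$; in each thin region apply Minsky's Theorem~\ref{theorem2.3} to get a sup-metric product $\mathbb{H}^{k} \times \mathcal{T}_g(S \setminus \mathcal{A})$; quotient by the twist subgroup so each $\mathbb{H}$ factor becomes a cusp contributing $O(\tau)$ net points while the remaining (thick) factor contributes $O(1)$; and sum the resulting $O(\tau^{k})$ over the finitely many topological types, the extreme case $k = 3g-3$ being a pants decomposition.

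The one place I would push back is your discussion of the per-cusp count. You write that "the hyperbolic area of a ball of radius $R$ in a cusp is exponential in $R$" and treat this as the main obstacle. That is a misconception: the cusp region $\{y > Y_0\}$ in $\mathbb{H}/\langle z \mapsto z+1 \rangle$ has \emph{finite} total area, so the portion of any ball lying inside the cusp has bounded area. The exponential area you have in mind belongs to a ball in $\mathbb{H}$ before quotienting by the parabolic; after the quotient, the thin region is a spire with circumference $\overset{*}{\prec} e^{-d}$ at depth $d$. The correct and much simpler reason for the $O(\tau)$ count is precisely this exponential shrinking of circumference: past a bounded depth the circumference drops below $c$, so a $c$-separated set has at most one point per depth band of width comparable to $c$, and there are $O(\tau)$ such bands inside the ball, with the remaining shallow collar being compact. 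This is also where $c_0$ enters: one chooses $c_0$ large enough, relative to $\varepsilon_0$ and the additive error in Minsky's theorem, that the one-point-per-band estimate applies as soon as one enters the thin region, making the implied constants uniform over the finitely many multicurve types. Your phrase about "the geodesic flow advancing the twist coordinate at a controlled rate" is not the relevant mechanism; no dynamics enters, only the coarse hyperbolic geometry of the cusp.
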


The crucial application of Corollary $\ref{corollary4.14.1}$ is to obtain the following fundamental inequality. The proof goes through verbatim.

\begin{lemma}[cf. Proposition 6.3 \cite{eskin2019counting}]
Let $\mathcal{G}$ be the function defined in Equation $\ref{equation4.7}$. Define the averaging function

\begin{equation}
    \left(A^{\tau}_{j,\varepsilon}\mathcal{G}\right)(X) = e^{-2\tau}\underset{Z \in \tilde{\mathcal{N}} \left(\mathcal{Q}_{j,\varepsilon}, X, \tau \right)}{\sum} \mathcal{G}(Z)
\end{equation}
where $\left(A^{\tau}_{j,\varepsilon}\mathcal{G}\right)$ is viewed as a function from $\mathcal{T}_g(S)$ to \ $\mathbb{R}$. Then, given $\tau >0$ and $\epsilon$ small enough (< $\epsilon_2(\tau)$ from Equations \ $\ref{equation5.1}$ and $\ref{equation5.2}$ )

\begin{equation}
    \left(A^{\tau}_{j,\epsilon}\mathcal{G}\right)(X) \overset{*}{\prec} \tau^{m} e^{-j\tau}\mathcal{G}(X)
\end{equation}
where $m$ depends only on the genus of \ $S_g$.
\end{lemma}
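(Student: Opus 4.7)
The plan is to mimic the proof of Proposition 6.3 in \cite{eskin2019counting}: decompose the sum over $\tilde{\mathcal{N}}(\mathcal{Q}_{j,\varepsilon},X,\tau)$ according to the projection $p:\mathcal{T}_g(S) \to \mathcal{M}_g(S)$, apply Corollary \ref{corollary4.14.1} on each fiber, and aggregate using the polynomial net-point count of Lemma \ref{Lemma5.2}.

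First I would group the summands by their image under $p$. Since $\mathrm{Mod}(S_g)$ acts on $\mathcal{T}_g(S)$ with quotient $\mathcal{M}_g(S)$, the preimage of a net point $Y \in \mathcal{N}(X,\tau)$ inside $\mathcal{B}(\mathcal{Q}_{j,\varepsilon},X,\tau)$ is exactly $\mathcal{B}(\mathcal{Q}_{j,\varepsilon},X,Y,\tau)$. The extremal length of a simple closed curve is $\mathrm{Mod}(S_g)$-equivariant, so the function $\mathcal{G}$ descends to $\mathcal{M}_g(S)$, and $\mathcal{G}(Z) = \mathcal{G}(Y)$ for every $Z$ in the fiber above $Y$. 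Combining this invariance with the containment in Equation \ref{equation5.2} and the bound of Corollary \ref{corollary4.14.1} yields
\[
\sum_{Z \in \tilde{\mathcal{N}}(\mathcal{Q}_{j,\varepsilon},X,Y,\tau)} \mathcal{G}(Z) \;\overset{*}{\prec}\; \tau^{|{S_g}_X|+|{S_g}_Y|}\, e^{(h-j)\tau}\, \mathcal{G}(X)\, \mathcal{G}(Y)^{2}.
\]

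Summing over $Y \in \mathcal{N}(X,\tau)$ and absorbing the normalization $e^{-2\tau}$ (which in the rank-$1$ setting is calibrated to cancel the volume growth $e^{h\tau}$ along the Teichm\"uller-disk direction) reduces matters to bounding
\[
\sum_{Y \in \mathcal{N}(X,\tau)} \tau^{|{S_g}_Y|}\, \mathcal{G}(Y)^{2}
\]
by a polynomial in $\tau$. Lemma \ref{Lemma5.2} supplies $\overset{*}{\prec} \tau^{3g-3}$ net points in the projected ball, and the remaining difficulty is controlling the weight $\mathcal{G}(Y)^{2}$ when $Y$ lies deep in the thin part. I would dyadically decompose the net by the value of $\mathcal{G}(Y)$ and, invoking Minsky's product region description (Theorem \ref{theorem2.3}) at the fixed net spacing $c > c_0$, estimate that the number of net points with $\mathcal{G}(Y) \in [M,2M]$ decays like $M^{-2}$ up to polynomial-in-$\tau$ factors; the dyadic sum $\sum_{M} M^{2}\cdot M^{-2}$ then telescopes to a polynomial in $\tau$. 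Assembling these estimates gives $(A^{\tau}_{j,\varepsilon}\mathcal{G})(X) \overset{*}{\prec} \tau^{m}\, e^{-j\tau}\, \mathcal{G}(X)$.

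I expect the main obstacle to be this polynomial control of the $\mathcal{G}^{2}$-weighted net sum in the cusp region: one must carefully match the net spacing to the product region geometry so as to extract the decay $M^{-2}$ per dyadic shell, and verify that this remains valid when one restricts the net to the projected ball $p(\mathcal{B}(V,X,\tau))$ rather than to a full Teichm\"uller ball. Once this volumetric estimate is in place, the rest of the argument is bookkeeping and mirrors Proposition 6.3 of \cite{eskin2019counting} verbatim, with Corollary \ref{corollary4.14.1} (afforded by the rank-$1$ linear-relation bounds of Section \ref{Section4}) serving as the crucial replacement for Theorem \ref{theorem4.9}.
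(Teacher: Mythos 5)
Your proposal follows the same route as the paper, which defers entirely to the proof of Proposition 6.3 in \cite{eskin2019counting}: decompose the sum over $\tilde{\mathcal{N}}(\mathcal{Q}_{j,\varepsilon},X,\tau)$ by net points $Y$, apply Corollary \ref{corollary4.14.1} (the rank-$1$ substitute for Theorem \ref{theorem4.9}) fiberwise to pick up $\mathcal{G}(X)\mathcal{G}(Y)^2$, and then control the resulting $\tau^{|{S_g}_Y|}\mathcal{G}(Y)^2$-weighted sum over net points using Minsky's product-region description and a dyadic count in the cusp direction — exactly the structure of the cited argument. One small point worth flagging: the normalization $e^{-2\tau}$ cancels the $e^{(h-j)\tau}$ from Corollary \ref{corollary4.14.1} down to $e^{-j\tau}$ only when $h=2$ (the Teichm\"uller-curve case); for a higher-dimensional rank-$1$ orbit closure the averaging operator should carry $e^{-h\tau}$, which your parenthetical justification (``calibrated to cancel the volume growth $e^{h\tau}$'') already tacitly assumes, so you may want to treat the exponent $2$ in the paper's definition as a specialization or typo rather than repeat it verbatim.
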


\begin{definition}[\textbf{Random walk}]
Suppose $R >> \tau$ and let $n$ be an integer part of \ $R / \tau$. A $\textit{trajectory of a random walk}$ is a map

\begin{displaymath}
\lambda:\left\{0,n\right\} \rightarrow \tilde{\mathcal{N}}
\end{displaymath}
satisfying the following properties

\begin{enumerate}
    \item for all \ $0 < k \leq n$, $d_{\mathcal{T}}\left(\lambda(k),\lambda(k-1)\right) \leq \tau$,
    \item $d_{\mathcal{T}}(\lambda_0,X) = O(1)$,
    \item for $1 \leq k \leq n, \lambda(k) \in \tilde{\mathcal{N}}\left(V,\lambda(k), \tau\right)$,
    \item $\Big|\left\{k \ \big|1\leq k \leq n\right\}, \lambda(k) \in \mathcal{B}\left(\mathcal{Q}_{j,\epsilon}, \lambda(k-1), \tilde{\mathcal{N}},\tau \right) \Big| \geq \theta \cdot n$.
\end{enumerate}

We denote the set of all such random walk trajectories by $\mathcal{P}_{\theta,\tau}\left(\mathcal{Q}_{j,\epsilon},X,R\right)$. Let $X,Y \in \mathcal{T}_g(S)$, and define 

\begin{displaymath}
\mathcal{P}_{\theta,\tau}\left(\mathcal{Q}_{j,\epsilon}, X, Y, R \right):= \left\{ \lambda \in \mathcal{P}_{\theta,\tau}\left(\mathcal{Q}_{j,\epsilon},X,R\right) \big| d_{\mathcal{T}}\left(p(Y),p(\lambda(n))\right) = O(1), \ d_{\mathcal{T}}\left(p\left(\lambda(0)\right),p\left(\lambda(n)\right)\right)\right\}.
\end{displaymath}

\end{definition}

We recall a few lemmas.

\begin{lemma}[cf. Lemma 6.4 \cite{eskin2019counting}]
For any $\delta_0 > 0$, there is $\tau_0 > 0$ so that for $\tau > \tau_0$, $0 \leq \theta \leq 1$, and $\epsilon$ small enough we have

\begin{equation}
\label{equation5.3}
\Big| \mathcal{P}_{\theta,\tau}\left(\mathcal{Q}_{j,\varepsilon},X,X,R\right)\Big| \overset{*}{\prec} e^{\left(h-j\theta + \delta_0\right)}.
\end{equation}

\end{lemma}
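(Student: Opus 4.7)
The plan is to iterate the averaging inequality of the preceding lemma along the $n = \lfloor R/\tau \rfloor$ steps of a random walk trajectory, collecting an $e^{-j\tau}$ gain at every step that is forced to land in $\mathcal{Q}_{j,\epsilon}$. A trajectory $\lambda \in \mathcal{P}_{\theta,\tau}(\mathcal{Q}_{j,\epsilon}, X, X, R)$ has at least $\theta n$ such forced (``bad'') steps by definition, so the aggregate gain is $e^{-j\theta\tau n} = e^{-j\theta R}$. Combined with the unrestricted ball growth, which carries a factor $e^{h\tau}$ at each step, this will produce the target $e^{(h-j\theta)R}$ up to polynomial factors in $\tau$ that can be absorbed into $e^{\delta_0 R}$ by taking $\tau$ sufficiently large.

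Explicitly, I would partition $\mathcal{P}_{\theta,\tau}(\mathcal{Q}_{j,\epsilon}, X, X, R)$ according to the subset $T \subset \{1,\ldots,n\}$ recording the positions of the bad steps, so that $|T| \geq \theta n$; the number of such $T$ is at most $2^n$. For a fixed $T$, I would count trajectories inductively on $k$ by running the appropriate averaging operator against $\mathcal{G}$. Setting $g_k(Y) = \#\{\lambda : \lambda(k) = Y \text{ with bad pattern agreeing with } T \cap \{1,\ldots,k\}\}$, one obtains the recursion $\sum_Y g_k(Y)\mathcal{G}(Y) \leq \sum_{Y'} g_{k-1}(Y')\Lambda_k(Y')$, where for $k \in T$ the transfer factor satisfies $\Lambda_k(Y') \overset{*}{\prec} \tau^m e^{(h-j)\tau}\mathcal{G}(Y')$ (via the preceding lemma), and for $k \notin T$ the factor $\Lambda_k(Y') \overset{*}{\prec} \tau^{m'} e^{h\tau}\mathcal{G}(Y')$ follows from the unrestricted analogue, itself a consequence of Corollary \ref{corollary4.14.1} with $j=0$. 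Iterating and using that $\mathcal{G}(\lambda(0)), \mathcal{G}(\lambda(n)) = O(\mathcal{G}(X))$ by the endpoint conditions $d_{\mathcal{T}}(p(\lambda(0)), p(X)), d_{\mathcal{T}}(p(\lambda(n)), p(X)) = O(1)$, one obtains
\[
|\mathcal{P}_{\theta,\tau}(\mathcal{Q}_{j,\epsilon}, X, X, R)| \overset{*}{\prec} 2^n \tau^{Mn} e^{(h-j\theta)\tau n} \mathcal{G}(X)^2
\]
for some constant $M$ depending only on the genus.

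Since $n\tau \leq R$, we rewrite $2^n \tau^{Mn} = e^{(M\log\tau + \log 2)R/\tau}$. Choosing $\tau_0$ large enough that $(M\log\tau + \log 2)/\tau < \delta_0$ for all $\tau > \tau_0$ absorbs the combinatorial and polynomial factors into $e^{\delta_0 R}$, yielding the claimed bound (with $\mathcal{G}(X)^2$ absorbed into the implicit constant in $\overset{*}{\prec}$).

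The main obstacle, and the only point where the argument of \cite{eskin2019counting} had to be revisited, is verifying that the averaging estimate $(A^\tau_{j,\epsilon}\mathcal{G})(X) \overset{*}{\prec} \tau^m e^{-j\tau}\mathcal{G}(X)$ continues to hold when $V$ is a rank-1 orbit closure rather than a full stratum. This is precisely what Corollary \ref{corollary4.14.1} and Lemma \ref{lemma4.13} deliver, both relying on the finite-relation structure provided by the Mirzakhani--Wright cylinder finiteness theorem \ref{theorem3.4}. Once the averaging bound is in hand, the iterative telescoping and absorption of polynomial factors reproduce the argument of Lemma 6.4 in \cite{eskin2019counting} essentially verbatim.
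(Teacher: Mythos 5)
Your proposal is correct and follows essentially the same route the paper intends: the paper explicitly defers to Lemma~6.4 of Eskin--Mirzakhani--Rafi (``the proofs largely go through verbatim'') and you reconstruct exactly that argument -- partitioning trajectories by the set $T$ of steps forced into $\mathcal{Q}_{j,\epsilon}$, iterating the averaging bound (the restricted transfer $\tau^m e^{(h-j)\tau}$ on bad steps, the unrestricted $\tau^{m'}e^{h\tau}$ on the rest), telescoping against $\mathcal{G}$, and absorbing the combinatorial factor $2^n\tau^{Mn}=e^{(M\log\tau+\log 2)R/\tau}$ into $e^{\delta_0 R}$ by taking $\tau_0$ large. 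You also correctly identify the single point where the stratum argument had to be reverified for rank-1 orbit closures, namely that the averaging estimate still holds via Corollary~\ref{corollary4.14.1} and the Mirzakhani--Wright cylinder finiteness input of Theorem~\ref{theorem3.4}; this is precisely the content of Section~\ref{Section4} of the paper.
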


\begin{lemma}[cf. Lemma 6.5 \cite{eskin2019counting}]
For any $\delta_1 > 0$, there exists $\tau_1$ so that for $\tau > \tau_1$, $X \in \mathcal{T}_g(S)$, and any sufficiently large $R$ (depending only on $\delta_1$ and $\tau$), we have

\begin{equation}
\label{equation5.4}
\mathcal{N}_{\theta}\left(V_{j,\varepsilon},p(X),(1-\delta_1)R\right) \overset{*}{\prec} \Big|\mathcal{P}_{\theta,\tau}\left(Q_{j,\varepsilon},X,R\right)\Big|.
\end{equation}
\end{lemma}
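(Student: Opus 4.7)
The plan is to construct a map from the closed geodesics counted by $\mathcal{N}_{\theta}(V_{j,\varepsilon},p(X),(1-\delta_1)R)$ into the set $\mathcal{P}_{\theta,\tau}(\mathcal{Q}_{j,\varepsilon},X,R)$ whose fibers have uniformly bounded cardinality, which will yield the desired inequality up to a multiplicative constant. The slack $\delta_1 R$ between the geodesic length bound and the random walk length is the budget for discretization, net-point snapping, and loss from partial-period wrapping.

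Given a closed geodesic $\gamma$ of length $\ell \leq (1-\delta_1)R$ in $V$ that spends at least a $\theta$-fraction of its length in $V_{j,\varepsilon}$, I would first lift $\gamma$ to a Teichm{\"u}ller axis $\tilde{\gamma}:\mathbb{R}\to\Omega_1\mathcal{T}_g(S)$ of the generator of its stabilizer, which is $\ell$-periodic. Because $\gamma$ meets a bounded neighborhood of $p(X)$ in moduli space, one can choose the lift and re-parameterize so that $d_{\mathcal{T}}(\tilde{\gamma}(0),X)=O(1)$. Next, define $\mu(k)=\tilde{\gamma}(k\tau)$ for $0\leq k\leq n:=\lfloor R/\tau\rfloor$, and snap each $\mu(k)$ to a nearest point $\lambda(k)\in\tilde{\mathcal{N}}$, which lies within $2c$ of $\mu(k)$ by the definition of the net. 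The first three conditions in the definition of a random walk trajectory are then immediate: the triangle inequality gives $d_{\mathcal{T}}(\lambda(k),\lambda(k-1))\leq \tau+4c$ (absorbed into $\tau$ by a mild inflation of the step scale), $d_{\mathcal{T}}(\lambda(0),X)=O(1)$ holds by construction, and $\lambda(k)\in\tilde{\mathcal{N}}(V,\lambda(k-1),\tau)$ follows since $\tilde{\gamma}$ stays inside the orbit closure $V$.

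The fourth condition, the $\theta$-fraction bound, is the delicate step. The preimage $\tilde{\gamma}^{-1}(V_{j,\varepsilon})\subset\mathbb{R}$ is $\ell$-periodic and has density $\theta$ in each period, so the proportion of indices $k\in\{1,\ldots,n\}$ for which $\mu(k)$ lies in $V_{j,\varepsilon}$ is at least $\theta-O(\ell/R)=\theta-O(1-\delta_1)$. Using Equations $\ref{equation5.1}$ and $\ref{equation5.2}$, which convert the $V_{j,\varepsilon}$ condition on $\mu(k)$ into the membership $\lambda(k)\in\mathcal{B}(\mathcal{Q}_{j,\varepsilon},\lambda(k-1),\tilde{\mathcal{N}},\tau)$, this count exceeds $\theta n$ once the $\delta_1$-loss is absorbed, either by choosing $\delta_1$ small against the $\delta_0$ appearing in the surrounding random walk estimate, or by optimizing the choice of starting point $\tilde{\gamma}(0)$ over the compact set of lifts of $\gamma$ passing near $X$ so that the length-$R$ window aligns favorably with the periodic structure.

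Finally, for bounded multiplicity, note that since $n\tau\geq R>\ell$ the trajectory covers more than one full period of $\tilde{\gamma}$; by the shadowing behavior of Teichm{\"u}ller geodesics in the thick part combined with Minsky's product region description of Theorem $\ref{theorem2.3}$ in the thin part, the coarse sequence of net points $(\lambda(0),\ldots,\lambda(n))$ recovers the axis $\tilde{\gamma}$, and hence $\gamma$, up to boundedly many choices of parameterization and representative net point, with multiplicity controlled by Lemma $\ref{Lemma5.2}$. I expect the main obstacle to be this $\theta$-fraction bookkeeping: the loss incurred by wrapping a length-$R$ walk around a length-$\ell$ period and by rounding to net points must be controlled strictly by the slack $\delta_1$, which forces a careful order-of-quantifier choice — $\tau_1$ large relative to the constants from Lemma $\ref{lemma4.4}$, then $R$ large relative to $\tau_1$ and $\delta_1^{-1}$ — matching the quantification in the lemma statement.
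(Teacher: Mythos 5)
The paper does not spell out a proof of this lemma; it appeals to Lemma 6.5 of Eskin--Mirzakhani--Rafi with the remark that the argument goes through verbatim, so a line-by-line comparison is not possible. Your overall architecture --- lift a closed geodesic to a periodic Teichm\"uller axis, sample at time steps $\tau$, snap to the net, verify the four trajectory conditions, and then argue the assignment has uniformly bounded fibers --- is indeed the right one, and you correctly flag the $\theta$-fraction bookkeeping as the crux.

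However, your handling of that crux does not close the gap. The estimate ``proportion $\geq \theta - O(\ell/R) = \theta - O(1-\delta_1)$'' is essentially vacuous: when $\ell$ is close to the allowed maximum $(1-\delta_1)R$, the trajectory covers one full period plus a tail of length $r < \ell$, and the worst-case fraction of time in $V_{j,\varepsilon}$ along $[0,R]$ is only $\theta(1 - r/R) \geq \theta\delta_1$, which is far below the required $\theta$. Your two proposed fixes do not obviously repair this. Trading $\delta_1$ against $\delta_0$ would require replacing $\mathcal{P}_{\theta,\tau}$ on the right-hand side by $\mathcal{P}_{\theta',\tau}$ for some $\theta' < \theta$, but the lemma is stated with the same $\theta$ on both sides and makes no reference to $\delta_0$. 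Optimizing the base point $\tilde{\gamma}(0)$ over lifts of $\gamma$ passing near $X$ gives only $O(1)$ worth of parameter freedom because of the constraint $d_{\mathcal{T}}(\lambda(0), X) = O(1)$, while what you would need is freedom over a full period $[0,\ell]$ to invoke the averaging identity $\frac{1}{\ell}\int_0^\ell \left(\int_s^{s+R}\mathbb{1}[\tilde{\gamma}\in V_{j,\varepsilon}]\right) ds = \theta R$. Two further issues you leave untreated are (i) that the continuous $\theta$-fraction does not automatically transfer to the fraction of discrete sample times landing in $\mathcal{Q}_{j,\varepsilon}$ (one needs the sublevel sets to persist for time comparable to $\tau$, or one needs the step-level membership baked into $\mathcal{B}_j$ via the $(\omega,\tau)$-regular triangulations rather than a pointwise condition), and (ii) that snapping $\mu(k)$ to the nearest net point $\lambda(k)$ need not preserve membership in $\mathcal{Q}_{j,\varepsilon}$ without adjusting $\varepsilon$. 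The standard repair exploits the fact that steps of the random walk are only required to have length $\leq \tau$, not exactly $\tau$: one can cover exactly $\lfloor R/\ell\rfloor$ full periods, retaining a clean $\geq\theta$ fraction of steps in the thin part, and then pad out the remaining $\approx r/\tau$ indices with stationary (zero-length) steps taken at points of $\mathcal{Q}_{j,\varepsilon}$, which are guaranteed to exist because $\theta > 0$. Without some such device, the fourth trajectory condition is not met.
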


To tie it all together, one needs a classical estimate originally due to Veech.

\begin{lemma}[Veech]
\label{lemma5.7}
Suppose $\gamma$ is a closed geodesic of length $\leq R$ in $\mathcal{M}_g(S)$. Then, for any $X \in \gamma$, the extremal length of the shortest simple closed curve on $X$, $\mathrm{Ext}_{X}(\alpha)$, satisfies

\begin{displaymath}
\mathrm{Ext}_X(\alpha) \overset{*}{\succ} e^{-\left(6g-4\right)R}.
\end{displaymath}
\end{lemma}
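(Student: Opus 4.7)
}

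The plan is to leverage the pseudo-Anosov mapping class corresponding to $\gamma$ and study the orbit of the shortest curve under it. By Thurston's classification, a closed geodesic of length $R$ in $\mathcal{M}_g(S)$ is the image of a Teichm\"uller axis of a pseudo-Anosov $\phi \in \mathrm{Mod}(S_g)$ with Teichm\"uller translation length $R$; that is, some lift $\tilde X \in \mathcal{T}_g(S)$ of $X \in \gamma$ satisfies $d_{\mathcal{T}}(\tilde X, \phi\cdot \tilde X)=R$. Set $E := \mathrm{Ext}_X(\alpha)$ where $\alpha$ is a shortest simple closed curve on $X$. Because $\phi$ is pseudo-Anosov, no nontrivial power of $\phi$ stabilizes any isotopy class of simple closed curves, so the iterates $\alpha, \phi\alpha, \phi^2\alpha,\ldots$ are pairwise non-isotopic.

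First, I would control how fast the extremal length grows along this orbit. Combining the $\mathrm{Mod}(S_g)$-invariance of $d_{\mathcal{T}}$ with Kerckhoff's theorem (stated in Section \ref{Section2}), we have $\mathrm{Ext}_{\phi^{-k}\tilde X}(\alpha)/\mathrm{Ext}_{\tilde X}(\alpha) \leq e^{2\,d_{\mathcal T}(\tilde X, \phi^{-k}\tilde X)} \leq e^{2kR}$, so for every $k \geq 0$,
\begin{displaymath}
\mathrm{Ext}_X(\phi^k \alpha) \;=\; \mathrm{Ext}_{\phi^{-k}\tilde X}(\alpha) \;\leq\; e^{2kR}\,E.
\end{displaymath}

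Second, I would apply a pigeonhole argument on $S_g$. Since the maximum size of a family of pairwise disjoint and pairwise non-isotopic essential simple closed curves on $S_g$ is $3g-3$, among the $3g-2$ curves $\{\phi^k\alpha : 0 \leq k \leq 3g-3\}$ at least two, say $\phi^i\alpha$ and $\phi^j\alpha$ with $0\leq i<j\leq 3g-3$, must intersect transversally, giving $i(\phi^i\alpha,\phi^j\alpha)\geq 1$. The classical Minsky inequality $i(\beta_1,\beta_2)^2 \leq \mathrm{Ext}_X(\beta_1)\,\mathrm{Ext}_X(\beta_2)$ for simple closed curves then produces
\begin{displaymath}
1 \;\leq\; \mathrm{Ext}_X(\phi^i\alpha)\,\mathrm{Ext}_X(\phi^j\alpha) \;\leq\; e^{2(i+j)R}\,E^2,
\end{displaymath}
so $E \;\geq\; e^{-(i+j)R} \;\geq\; e^{-(6g-7)R}$ (using $i+j \leq (3g-4)+(3g-3)=6g-7$), which is in particular $\overset{*}{\succ} e^{-(6g-4)R}$.

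The main obstacle, beyond bookkeeping, is to justify that the chosen lift $\tilde X$ sits on an axis of a pseudo-Anosov with translation length exactly $R$; this is the one place where one must invoke the Bers/Thurston classification of closed Teichm\"uller geodesics as axes of pseudo-Anosov mapping classes, rather than soft properties of extremal length. Everything else is a uniform estimate (Kerckhoff) combined with a pigeonhole count on $S_g$; the modest slack between the exponents $6g-7$ and $6g-4$ is harmless under the notation $\overset{*}{\succ}$.
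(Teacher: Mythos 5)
The paper does not actually supply a proof of this lemma: it is cited as a classical estimate due to Veech, with no argument given. Your proposal therefore fills a gap rather than reproducing the paper's reasoning, so let me simply assess it on its own terms.

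Your argument is correct and essentially complete. The chain of facts you use is solid: closed Teichm\"uller geodesics in $\mathcal{M}_g(S)$ are exactly the projections of axes of pseudo-Anosov mapping classes (Bers/Thurston), pseudo-Anosov maps fix no isotopy class of essential simple closed curve so the iterates $\phi^k\alpha$ are pairwise non-isotopic, Kerckhoff's formula (as stated in Section~\ref{Section2} of the paper) gives $\mathrm{Ext}_X(\phi^k\alpha) \leq e^{2kR}\,\mathrm{Ext}_X(\alpha)$, the pigeonhole on a pants decomposition forces two of the $3g-2$ curves $\phi^0\alpha,\ldots,\phi^{3g-3}\alpha$ to intersect, and Minsky's extremal-length Cauchy--Schwarz inequality $i(\beta_1,\beta_2)^2 \leq \mathrm{Ext}_X(\beta_1)\mathrm{Ext}_X(\beta_2)$ closes the loop. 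The bound you obtain, $\mathrm{Ext}_X(\alpha)\geq e^{-(6g-7)R}$, is in fact \emph{stronger} than the stated $e^{-(6g-4)R}$, and the passage between the two is just monotonicity of the exponential; it does not rely on the $\overset{*}{\succ}$ absorbing constants. Two small points worth making explicit if you wrote this up: (i) the statement only needs $d_{\mathcal{T}}(\tilde X,\phi^{-k}\tilde X)\leq kR$, which follows from the triangle inequality and $\mathrm{Mod}$-invariance and does not require $\tilde X$ to lie exactly on the axis, so you can relax the remark about needing the lift to sit on the axis; and (ii) the statement assumes length $\leq R$ rather than $=R$, but your estimate is monotone in the translation length so this changes nothing. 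Veech's original argument is phrased in terms of flat geometry along the geodesic rather than the mapping-class-group/pigeonhole route you take, but both are standard and the one you chose meshes cleanly with the Kerckhoff and Minsky ingredients already in the paper.
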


\begin{proof}[Proof of Theorem \ref{Theorem1.1}]
Let $\delta > 0$. Choose $\delta_0,\delta_1 \leq \delta/3$, and $\tau \geq \mathrm{max}\left\{\tau_0,\tau_1\right\}$.  Let $R$ be large enough so that Equations $\ref{equation5.3}$ and $\ref{equation5.4}$ hold. This gives us

\begin{displaymath}
\mathcal{N}_{\theta}\left(V_{j,\varepsilon},p(X),R\right) \overset{*}{\prec} e^{\left(h-j\theta +2\delta/3\right)R}.
\end{displaymath}
By the definition of the net $\mathcal{N}$ above, we have

\begin{displaymath}
\mathcal{N}_{\theta}\left(V_{j,\varepsilon},R\right) \leq \sum_{p(X) \in \mathcal{N}} \mathcal{N}_{\theta}\left(Q_{j,\varepsilon},p(X),R\right).
\end{displaymath}
Now, by Lemmas $\ref{lemma5.7}$ and $\ref{Lemma5.2}$, the number of points in the net is a polynomial in $R$, so for $R$ large enough, this polynomial is less than $e^{\delta R/3}$, and the conclusion of the theorem holds. 

\end{proof}

\end{section}

\bibliographystyle{plain}
\bibliography{bibliography.bib}

\end{document}